\declaretheorem[style = plain, numberwithin = section]{theorem}
\declaretheorem[style = plain,      sibling = theorem]{corollary}
\declaretheorem[style = plain,      sibling = theorem]{lemma}
\declaretheorem[style = plain,      sibling = theorem]{proposition}
\declaretheorem[style = definition, sibling = theorem]{definition}
\declaretheorem[style = definition, sibling = theorem]{observation}
\declaretheorem[style = definition, sibling = theorem]{example}
\declaretheorem[style = definition, sibling = theorem]{remark}
\declaretheorem[style = plain,      sibling = theorem]{assumption}
\newcommand{\N}{\mathbb{N}}
\newcommand{\Z}{\mathbb{Z}}
\newcommand{\R}{\mathbb{R}}
\newcommand{\C}{\mathbb{C}}
\newcommand{\T}{\mathbb{T}}
\DeclareMathOperator{\spn}{span}
\DeclareMathOperator{\clspn}{\overline{\spn}}
\DeclareMathOperator{\covol}{covol}
\DeclareMathOperator{\Ell}{L}
\def\lregw#1{\Ell_{#1}}
\def\lreg#1#2{\lregw{#1}(#2)}
\def\lregpoint#1#2{\Ell_{#1}^{#2}}
\DeclareMathOperator{\rel}{rel}
\newcommand{\Hi}{\mathcal{H}}
\newcommand{\Hip}{\mathcal{H}_{\pi}}
\newcommand{\G}{\mathcal{G}}
\def\chull#1{\Omega(#1)}
\def\phull#1{\Omega^{\times}(#1)}
\def\dhull#1{\Omega_0(#1)}
\def\deloid#1{\mathcal{G}(#1)}
\title{A dynamical approach to sampling and interpolation in unimodular groups}
\author{Ulrik Enstad}
\address{Department of Mathematics,
University of Oslo,
Moltke Moes vei 35,
0851 Oslo.}
\email{ubenstad@math.uio.no}
\author{Sven Raum}
\address{Institut für Mathematik,
  Universit{\"a}t Potsdam,
  Campus Golm, Haus 9,
  Karl-Liebknecht-Str. 24-25,
  D-14476 Potsdam OT Golm,
  Germany.}
\email{sven.raum@uni-potsdam.de}
\begin{document}

\begin{abstract}
    We introduce a notion of covolume for point sets in locally compact groups that simultaneously generalizes the covolume of a lattice and the reciprocal of the Beurling density for amenable, unimodular groups. This notion of covolume arises naturally from transverse measure theory applied to the hull dynamical system associated to a point set. Using groupoid techniques, we prove necessary conditions for sampling and interpolation in reproducing kernel Hilbert spaces of functions on unimodular groups in terms of this new notion of covolume. These conditions generalize previously known density theorems for compactly generated groups of polynomial growth, while also covering important new examples, in particular model sets arising from cut-and-project schemes.
\end{abstract}

\maketitle

\section{Introduction}

In this paper we consider necessary density conditions for sampling and interpolation in locally compact groups. Roughly speaking, a discrete subset $\Lambda$ of a group $G$ is sampling for a space of functions $\Hi$ on $G$ if every $f \in \Hi$ can be (stably) reconstructed from its restriction $f|_{\Lambda}$ to $\Lambda$. Dually, $\Lambda$ is interpolating if every (suitable) sequence $(c_\lambda)_{\lambda \in \Lambda}$ appears as the restriction of a function $f \in \Hi$, that is, $f(\lambda) = c_\lambda$ for all $\lambda \in \Lambda$.

Necessary density conditions began with Landau's celebrated theorem for Paley--Wiener spaces of band-limited functions on $\R^d$ \cite{La67}. Using the now ubiquitous density concept of Beurling \cite{Be89}, Landau showed that a set of sampling for a Paley--Wiener space necessarily has lower density at least the Lebesgue measure $m(K)$ of the corresponding bandwidth $K \subseteq \R^d$, while a set of interpolation necessarily has upper density at most $m(K)$. Since then, necessary (and sometimes sufficient) density conditions in terms of Beurling densities have been proved in many other contexts, including (but not limited to) Bargmann--Fock spaces \cite{SeWa92-2,Ly92}, Gabor frames \cite{He07,RaSt95} and more general coherent systems \cite{Ho14}, and general reproducing kernel Hilbert spaces on metric measure spaces \cite{FuGr07,MiRa20}.

A Paley--Wiener space is an example of a reproducing kernel Hilbert space of functions on $\R^d$ which is invariant under translation by elements from $\R^d$. More generally, many sampling and interpolation problems in the literature have natural formulations in terms of reproducing kernel Hilbert spaces $\Hi$ on other locally compact groups $G$ which are invariant under left translations: That is, whenever $f \in \Hi$ and $x \in G$, the translated function $\lreg{}{x}f$ given by $\lreg{}{x}f(y) = f(x^{-1}y)$ is in $\Hi$. A main source of examples comes from (projective) discrete series representations \cite{FuGr07}, which includes in particular the theory of Gabor systems and generalized time-frequency shifts \cite{Gr21}, wavelets, and frames in the orbit of Bergman kernels \cite{Pe72,MoPe74}. On the other hand, the Paley--Wiener spaces themselves can be straightforwardly generalized to locally compact abelian groups, and have also seen generalizations to certain non-abelian groups, see e.g.\ \cite{Pe98}.

It is thus a natural question whether Landau's necessary density conditions can be generalized to translation-invariant reproducing kernel Hilbert spaces on general locally compact groups. An immediate obstacle, however, is to give a proper generalization of Beurling density to this setting. In the case of compactly generated groups of polynomial growth, defining Beurling densities in terms of suitable metric balls were shown to give the correct notion of density in \cite[Theorem 5.3]{FuGrHa17}. However, as remarked in \cite{FuGrHa17}, metric balls do not seem to give the correct notion of density e.g.\ in groups of exponential growth.

The goal of this paper is to introduce a notion of density and to prove necessary density conditions for sampling and interpolation in general locally compact, second-countable (lcsc) groups that are \emph{unimodular}: That is, the left and right Haar measures agree. Thus, we include polynomial growth groups and non-amenable examples such as $SL(2,\R)$ and other semisimple Lie groups, but leave out e.g.\ the affine group. For unimodular groups, we define for any relatively separated set $\Lambda \subseteq G$ two numbers $\covol_-(\Lambda)$ and $\covol_+(\Lambda)$ which we term the \emph{lower} and \emph{upper covolume} of $\Lambda$, respectively. As our following main theorem suggests, the reciprocals of these numbers give a notion of upper and lower density, respectively:

\begin{theorem}\label{thm:necessary_conditions}
Let $\Lambda$ be a subset of a unimodular lcsc group $G$ with identity $e$ and let $\Hi$ be a reproducing kernel Hilbert space of functions on $G$ and denote by $k_x$ the kernel at $x \in G$, i.e.,\
\[ f(x) = \langle f, k_x \rangle , \quad f \in \Hi . \]
Suppose that the following assumptions are satisfied:
\begin{enumerate}[label=(\alph*)]
    \item $\Hi$ is isometrically embedded into $L^2(G)$.
    \item $G$-invariance: For every $f \in \Hi$ and $x \in G$, the translation $\lreg{}{x}f(y) = f(x^{-1}y)$ is in $\Hi$.
    \item Localization: For some compact neighborhood $Q$ of the identity of $G$ it holds that
    \[ \int_G \sup_{t \in xQ}|f(t)|^2 \dif{x} < \infty , \quad f \in \Hi .\]
\end{enumerate}
Then the following hold:
\begin{enumerate}
    \item If $\Lambda$ is a set of sampling for $\Hi$, that is, there exist $A,B > 0$ such that
    \[ A \| f \|^2 \leq \sum_{\lambda \in \Lambda} |f(\lambda)|^2 \leq B \| f \|^2, \quad f \in \Hi , \]
    then
    \[ \| k_e \|^2 \covol_+(\Lambda) \leq 1 .\]
    \item If $\Lambda$ is a set of interpolation for $\Hi$, that is, for every $(c_\lambda)_{\lambda \in \Lambda} \in \ell^2(\Lambda)$ there exists $f \in \Hi$ such that
    \[ f(\lambda) = c_\lambda, \]
    then
    \[ \| k_e \|^2 \covol_-(\Lambda) \geq 1 . \]
\end{enumerate}
\end{theorem}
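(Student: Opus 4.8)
The plan is to translate sampling and interpolation into a comparison of two Hilbert modules over the groupoid $\deloid{\Lambda}$ of the punctured hull, equipped with the finite trace $\tau$ induced by a transverse measure, and to read off both inequalities from the monotonicity of $\tau$. Because $\covol_+(\Lambda)=\sup_\mu\covol_\mu(\Lambda)$ and $\covol_-(\Lambda)=\inf_\mu\covol_\mu(\Lambda)$ over the invariant probability measures $\mu$ on $\phull{\Lambda}$, it suffices to establish, for a fixed such $\mu$, the single-measure bounds $\|k_e\|^2\covol_\mu(\Lambda)\le 1$ (sampling) and $\|k_e\|^2\covol_\mu(\Lambda)\ge 1$ (interpolation); taking the supremum, resp.\ infimum, over $\mu$ then gives the theorem, while the degenerate case of no invariant measure is covered by the conventions $\covol_+=0$ and $\covol_-=\infty$.

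Fix $\mu$ and let $M$ be the von Neumann algebra of $\deloid{\Lambda}$ with the trace $\tau$ attached to the transverse measure of $\mu$ (\Cref{sec:measures}). I would build two Hilbert $M$-modules over the hull. The \emph{discrete module} is the field $\omega\mapsto \ell^2(\omega)$; relative separatedness of $\Lambda$ makes this module of finite trace, and, by the definition of the covolume through the transverse measure, its trace density equals $\covol_\mu(\Lambda)^{-1}$. The \emph{continuous module} is the constant field $\omega\mapsto\Hi$, which via (a) and the reproducing kernel I would identify with the projection $P$ of $L^2(G)$ onto $\Hi$; here (b) forces the diagonal $K(x,x)=\langle k_x,k_x\rangle=\|k_e\|^2$ of the kernel to be constant, and unimodularity then yields that the trace density of the continuous module is exactly $\|k_e\|^2$.

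The two modules are joined by the field of analysis operators $C_\omega\colon\Hi\to\ell^2(\omega)$, $C_\omega f=(\langle f,k_\lambda\rangle)_{\lambda\in\omega}$, which I would assemble into a single bounded adjointable morphism $C$ in $M$. For sampling, the lower frame bound gives $C^*C\ge A>0$, so $C$ is bounded below; its polar decomposition produces a partial isometry whose range projection sits inside the discrete module and is Murray--von Neumann equivalent to the identity of the continuous module, whence $\|k_e\|^2=\tau(\mathrm{id}_{\mathrm{cont}})\le\tau(\mathrm{id}_{\mathrm{disc}})=\covol_\mu(\Lambda)^{-1}$. For interpolation, surjectivity of each $C_\omega$ makes $C^*$ bounded below, so the identity of the discrete module is subequivalent to that of the continuous one and the inequality reverses, $\covol_\mu(\Lambda)^{-1}\le\|k_e\|^2$. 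Rearranging and optimizing over $\mu$ yields the two stated bounds.

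The hard part is not the final trace bookkeeping but making the field picture rigorous. The sampling (resp.\ interpolation) estimate is assumed only at the base point $\Lambda$, whereas the operator inequality $C^*C\ge A$ (resp.\ surjectivity of $C_\omega$) must hold for $\mu$-almost every $\omega\in\phull{\Lambda}$; I would propagate it along the dense $G$-orbit of $\Lambda$ and across limits by an equivariance argument combined with a semicontinuity estimate for the frame bounds, reducing to ergodic $\mu$ if convenient. Assumption (c) is the decisive tool throughout: it supplies the uniform Bessel bound, the measurability and integrability needed for $C$ and its range projections to lie in $M$, and the semicontinuity control required for the propagation step. Once these points and the trace-density computation of the continuous module are secured, the comparison of $\|k_e\|^2$ with $\covol_\mu(\Lambda)^{-1}$ is immediate.
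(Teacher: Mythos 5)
Your proposal is correct in outline, and it reaches the conclusion by a genuinely different formal route than the paper, though the two arguments share the same technical core. The paper never constructs the groupoid von Neumann algebra or invokes Murray--von Neumann comparison: instead it (1) propagates the sampling/interpolation inequalities from $\Lambda$ to every $P$ in the hull via Chabauty--Fell limits (\Cref{thm:frame_limits}, \Cref{prop:riesz_limits}), (2) rephrases the result as the constant section $k_e$ being a frame (resp.\ Riesz) vector for the groupoid representation $\lregpoint{\sigma}{\Hi}$ of $\deloid{\Lambda}$, (3) uses the intertwining property of the field of frame operators (\Cref{prop:groupoid_bessel_intertwiner}) to replace $k_e$ by the Parseval frame vector $S^{-1/2}k_e$ (resp.\ an orthonormal vector) via \Cref{cor:frame_iff_parseval}, and (4) integrates the resulting identity over $\phull{\Lambda}$ using the transverse measure formula, concluding from the elementary facts that Parseval frame elements have norm at most $1$ and orthonormal vectors have norm exactly $1$ (\Cref{thm:frame_bounds_measure}, \Cref{cor:necessary_conditions}). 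Your polar decomposition of $C$ and trace monotonicity $\tau(p)\le\tau(q)$ for $p\precsim q$ is the abstract packaging of exactly steps (3)--(4): the isometry $CS^{-1/2}$ is your partial isometry, and the paper's norm computation $\int_G|\langle k_e,\lreg{\sigma}{x}g_Q\rangle|^2\dif{x}=\|g_Q\|^2$ is your trace-density computation for the continuous module. What your approach buys is conceptual clarity and a clean explanation of why $\|k_e\|^2$ is the critical density (it is a von Neumann dimension); what it costs is the overhead of making the groupoid von Neumann algebra, its semifinite trace, and the affiliation of $C$ rigorous, all of which the paper deliberately avoids. Be aware of two points you defer. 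First, the propagation step is where most of the paper's work lies: the frame inequality passes to translates of $\Lambda$ trivially, but passing to Chabauty--Fell limits requires the tail control of \Cref{cor:wiener_summation} (this is precisely where assumption (c) enters) together with the merging analysis of \Cref{prop:rel_sep_conv}, and the lower frame bound degrades by a factor $\rel(\Lambda)$ in the limit --- harmless for your argument since only positivity matters, but it must be stated. Second, in the interpolation case pointwise surjectivity of the $C_\omega$ is not enough; you need a uniform lower Riesz bound over $\mu$-almost every fiber, which is what \Cref{prop:riesz_limits} supplies (and note the paper needs no ergodicity reduction: the bounds are obtained uniformly over the whole hull, not merely almost everywhere).
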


We remark that the numbers $\| k_e \|^2 \covol_+(\Lambda)$ and $\| k_e \|^2 \covol_-(\Lambda)$ in \Cref{thm:necessary_conditions} do not depend on the normalization of a Haar measure on $G$. Interpreting the reciprocal of the lower (resp.\ upper) covolume as an upper (resp.\ lower) density, \Cref{thm:necessary_conditions} states that the critical density for sampling and interpolation is given by $\| k_e \|^2$.

While \Cref{thm:necessary_conditions} is stated for translation invariant reproducing kernel Hilbert spaces, we prove in \Cref{thm:twisted_necessary_conditions} a more general result for invariance under twisted translations with respect to a 2-cocycle on the group $G$. This allows us to cover e.g.\ coherent systems arising from projective discrete series representations (cf.\ \Cref{subsec:coherent}). Such representations are particularly relevant for connected, simply connected, nilpotent Lie groups which do not admit any actual discrete series representations.

\subsection{Covolumes}

Let us detail the definition of the lower and upper covolume. Let $\Lambda$ be a relatively separated set of a unimodular lcsc group $G$, that is, $\rel(\Lambda) \coloneqq \inf_{U}  \sup_{x \in G}|\Lambda \cap xU| < \infty$ where the infimum is taken over all neighborhoods $U$ of the identity. The starting point of our definition is that for a lattice $\Gamma$, its covolume appears naturally in Weil's formula:
\begin{equation}
    \int_{G/\Gamma} \sum_{x \in P} f(x) \dif{\mu(P)} = \covol(\Gamma)^{-1} \int_G f(x) \dif{x} , \qquad f \in C_c(G) . \label{eq:weil_intro}
\end{equation}
Here, $\mu$ denotes the unique $G$-invariant probability measure on the homogeneous space $G/\Gamma$ of cosets of $\Gamma$.

For more general point sets $\Lambda \subseteq G$, a well-studied generalization of the coset space of a lattice is given by the (punctured) hull dynamical system $\phull{\Lambda}$, cf.\ \Cref{sec:point_sets}. In analogy with the lattice case we define the covolume of $\Lambda$ with respect to an invariant probability measure $\mu$ on $\phull{\Lambda}$ to be the number $\covol_\mu(\Lambda)$ appearing in a suitable generalization of Weil's formula, see \eqref{eq:weil} on p.\ \pageref{eq:weil}. Alternatively $\covol_\mu(\Lambda)$ can be defined in terms of the transverse measure of $\mu$, see \Cref{def:full_covolume}. We then set
\begin{align*} \covol_-(\Lambda) = \inf_{\mu} \covol_\mu(\Lambda), && \covol_+(\Lambda) = \sup_\mu \covol_\mu(\Lambda), \end{align*}
where the infimum and supremum are taken over all such invariant measures (if no measures exist, we set $\covol_-(\Lambda) = \infty$ and $\covol_+(\Lambda) = 0$).

Hull dynamical systems of point sets are among the main objects of study in the field of aperiodic order \cite{BaGr13} and its recent generalizations to the realm of non-abelian locally compact groups \cite{BjHa18,BjHaPo18}. Of particular interest are point sets for which the associated punctured hull is uniquely ergodic, i.e.,\ admits a unique invariant probability measure $\mu$, in which case $\covol_-(\Lambda) = \covol_+(\Lambda) = \covol_\mu(\Lambda)$. In many cases, the dynamical property of linear repetivity is known to be a sufficient condition for unique ergodicity, see \cite{LaPl03,BeHaPo20}. Another important class of uniquely ergodic point sets comes from cut-and-project schemes introduced by Meyer in \cite{Me72} and their generalization beyond abelian groups \cite{BjHaPo18}. Specifically, a model set in $G$ is a point set of the form $\Lambda = p_G(\Gamma \cap (G \times W))$ where $\Gamma$ is an appropriate lattice in a product group $G \times H$ for some locally compact group $H$, $W$ is a compact subset of $H$, and $p_G$ denotes the projection from $G \times H$ onto $G$. Under certain regularity assumptions on $W$ it was proved in \cite{BjHaPo18} that model sets in general lcsc groups are uniquely ergodic, and it can be derived from \cite{BjHaPo18} that the covolume of such a regular model set is given by the natural formula
\begin{equation}
    \covol_-(\Lambda) = \covol_+(\Lambda) = \frac{\covol(\Gamma)}{m_H(W)} . \label{eq:covolume_model_set}
\end{equation}
Here $\covol(\Gamma)$ is the covolume of $\Gamma$ in $G \times H$ and $m_H$ denotes the Haar measure on $H$ (cf.\ also \Cref{ex:model_sets}). Hence, for regular model sets, the lower and upper covolume take a particularly simple form.

As an example, consider the group $G = SL(2,\R)$ of $2 \times 2$ matrices with determinant $1$, which acts on the complex upper half-plane $\C^+ = \{ x + iy : y > 0 \}$ via Möbius transformations:
\[ \begin{pmatrix} a & b \\ c & d \end{pmatrix} \cdot z = \frac{az+b}{cz+d}, \quad \begin{pmatrix} a & b \\ c & d \end{pmatrix} \in SL(2,\R), z \in \C^+ .\]
Let $\Hi_k$ denote the Bergman space of analytic functions $f$ on $\C^+$ satisfying
\[ \| f \|_k^2 = \int_{\C^+} |f(x+iy)|^2 y^{k-2} \dif{x} \dif{y} < \infty . \]
A discrete series representation (cf.\ \Cref{subsec:coherent}) of $G = SL(2,\R)$ on $\Hi_k$ is given via
\[ \pi_k(A)f(z) = \frac{1}{(cz+d)^k} f( A^{-1} \cdot z) , \quad A = \begin{pmatrix} a & b \\ c & d \end{pmatrix} . \]
Given any so-called admissible analyzing vector $\eta \in \Hi_k$, the set of matrix coefficients $\{ \langle \xi, \pi_k( \cdot ) \eta \rangle : \xi \in \Hi_k \} \subseteq L^2(G)$ is a reproducing kernel Hilbert space on $G$ satisfying the assumptions of \Cref{thm:necessary_conditions} with critical density given by the formal dimension of $\pi_k$, $d_{\pi_k} = (k-1)/(4\pi)$ \cite[7.18]{Su75}. A set $\Lambda \subseteq G$ is a set of sampling (resp.\ interpolation) if and only if the coherent system $\pi(\Lambda)\eta = (\pi(\lambda)\eta)_{\lambda \in \Lambda}$ is a frame (resp.\ Riesz sequence) for $\Hi_k$.

One can construct model sets $\Lambda$ in $SL(2,\R)$ as follows: Let $\Gamma$ be a cocompact irreducible lattice in $SL(2,\R) \times SL(2,\R)$ and let $W \subseteq SL(2,\R)$ be a compact identity neighborhood.  Then the projection of $\Gamma \cap (SL(2,\R) \times W)$ to the first component is a relatively dense model set. Examples of cocompact irreducible lattices in products of simple Lie groups can be obtained from arithmetic constructions. We refer to \cite[p.\ 1160]{BjHaPo22} for an explicit example. \Cref{thm:necessary_conditions} now states the following for the model set $\Lambda$ obtained above and an admissible analyzing vector $\eta$:
\begin{align*}
    \text{$\pi(\Lambda)\eta$ is a frame} &\implies \frac{m_H(W)}{\covol(\Gamma)} \geq \frac{k-1}{4\pi} , \\
    \text{$\pi(\Lambda)\eta$ is a Riesz sequence} &\implies \frac{m_H(W)}{\covol(\Gamma)} \leq \frac{k-1}{4\pi}.
\end{align*}

\subsection{Relation to Beurling densities}

In unimodular groups that are amenable, Banach densities provide a way to measure asymptotic frequencies of point sets and are commonly employed in ergodic theory and mathematical diffraction theory \cite{RiSt17,BaMo04,Ho95}. Banach densities are defined in terms of certain sequences of subsets called (strong) Følner sequences or van Hove sequences, cf.\ \Cref{subsec:beurling}, whose existence is equivalent to amenability of the ambient group. In compactly generated groups of polynomial growth, sequences of balls coming from a compatible metric form strong Følner sequences \cite{Br14}, and the resulting Banach densities coincide with Beurling densities \cite{Be89}. For this reason we shall stick to the name Beurling densities in the present paper, as was also done recently in \cite{PoRiSt22}. The following theorem shows that for Delone sets in amenable groups, the introduced lower and upper covolume agree with the reciprocal of upper and lower Beurling density, respectively.

\begin{theorem}
  \label{thm:intro-compare-density-covolume-separeted}
  Let $\Lambda \subseteq G$ be a Delone set, i.e.,\ both separated and relatively dense. Then
  \begin{gather*}
  D^{-}(\Lambda) = \frac{1}{\covol_+(\Lambda)}
  \quad \text{ and } \quad
  D^{+}(\Lambda) = \frac{1}{\covol_-(\Lambda)}
  \text{.}
  \end{gather*}
  More generally, if $\Lambda$ is merely a relatively separated set, then
  \[ \rel(\Lambda)^{-1} \cdot D^+(\Lambda) \leq \frac{1}{\covol_-(\Lambda)} \leq D^+(\Lambda) . \]
  If $\Lambda$ is relatively separated set and relatively dense
  , then
  \[ \rel(\Lambda)^{-1} \cdot D^-(\Lambda) \leq \frac{1}{\covol_+(\Lambda)} \leq D^{-}(\Lambda) . \]
\end{theorem}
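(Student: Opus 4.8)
My plan is to reinterpret both covolumes as extremal \emph{intensities} of invariant measures and then prove a variational principle tying these to the Beurling densities. The first step is to combine the transverse-measure description of $\covol_\mu(\Lambda)$ (\Cref{sec:measures}) with the generalized Weil formula \eqref{eq:weil} to identify the reciprocal covolume with the intensity of the stationary pattern $\mu$ on $\phull{\Lambda}$, namely the constant $\iota(\mu)$ characterized by Campbell's formula $\int_{\phull{\Lambda}}|\omega\cap B|\,d\mu(\omega)=\iota(\mu)\,m_G(B)$ for Borel $B\subseteq G$, where $|\omega\cap B|$ counts the distinct points of the configuration $\omega$ lying in $B$. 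Granting $\covol_\mu(\Lambda)^{-1}=\iota(\mu)$, the definitions turn into $\covol_-(\Lambda)^{-1}=\sup_\mu\iota(\mu)$ and $\covol_+(\Lambda)^{-1}=\inf_\mu\iota(\mu)$, so the entire theorem becomes the task of sandwiching these extrema between $D^-(\Lambda)$ and $D^+(\Lambda)$, the Delone case being exactly the specialization $\rel(\Lambda)=1$.

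I begin with the two inequalities bounding $\covol_-(\Lambda)^{-1}=\sup_\mu\iota(\mu)$, fixing a van Hove sequence $(F_n)$. For the upper one I would use pure semicontinuity on the compact hull $\phull{\Lambda}$: since every $\omega\in\phull{\Lambda}$ is a vague limit of translates of $\Lambda$, enclosing $F_n$ in slightly larger and smaller van Hove modifications and using that distinct-point counting cannot manufacture points in a limit gives $|\omega\cap F_n|\le\sup_{x\in G}|\Lambda\cap xF_n|$ up to a boundary term that the van Hove property annihilates. Integrating against $\mu$ and applying Campbell's formula yields $\iota(\mu)\le D^+(\Lambda)$ for every invariant $\mu$, hence $\covol_-(\Lambda)^{-1}\le D^+(\Lambda)$, the right-hand inequality of the theorem. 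The matching lower estimate $\covol_-(\Lambda)^{-1}\ge\rel(\Lambda)^{-1}D^+(\Lambda)$ I would obtain by a Krylov--Bogolyubov construction: picking translates with $|\Lambda\cap x_nF_n|/m_G(F_n)\to D^+(\Lambda)$, I form the empirical averages $\mu_n=m_G(F_n)^{-1}\int_{F_n}\delta_{g\cdot(x_n^{-1}\Lambda)}\,dg$, pass to a weak-$*$ limit $\mu$ (invariant by the van Hove property, using amenability), and bound its intensity from below by a Fubini argument; the $\rel(\Lambda)$ loss enters precisely here and is discussed below.

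The two inequalities for $\covol_+(\Lambda)^{-1}=\inf_\mu\iota(\mu)$ are dual. For the upper bound $\covol_+(\Lambda)^{-1}\le D^-(\Lambda)$ I would run the same construction along sparse translates, with $|\Lambda\cap x_nF_n|/m_G(F_n)\to D^-(\Lambda)$; relative density guarantees that no mass escapes to the empty configuration, so the weak-$*$ limit is a genuine invariant probability measure, and its intensity is at most $D^-(\Lambda)$ because coalescence of points can only lower the distinct count. For the lower bound $\covol_+(\Lambda)^{-1}\ge\rel(\Lambda)^{-1}D^-(\Lambda)$ I would instead argue for every invariant $\mu$ at once, bounding $|\omega\cap F_n|$ from below for hull elements $\omega$ by $\inf_{x\in G}|\Lambda\cap xF_n|$, again up to the $\rel(\Lambda)$ discount, and integrating via Campbell's formula.

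The crux, and the sole source of the factor $\rel(\Lambda)^{\pm1}$, is the behaviour of distinct-point counts under limits when $\Lambda$ is only relatively separated rather than uniformly discrete. In a vague (Fell) limit of translates, clusters of nearby points may coalesce, and the defining property $\rel(\Lambda)=\inf_U\sup_x|\Lambda\cap xU|$ is exactly the statement that at most $\rel(\Lambda)$ points can sit in an arbitrarily small neighbourhood, hence at most $\rel(\Lambda)$ of them can merge into a single limit point. Consequently the lower-semicontinuity estimates for distinct counts, and the intensity bounds derived from them, survive only after dividing by $\rel(\Lambda)$, whereas the upper estimates are immune since coalescence never increases the count. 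For a separated set no coalescence is possible and $\rel(\Lambda)=1$, so the two semicontinuity bounds and the two constructed measures pinch together, giving $\sup_\mu\iota(\mu)=D^+(\Lambda)$ and $\inf_\mu\iota(\mu)=D^-(\Lambda)$; translated back through $\covol_-(\Lambda)^{-1}=\sup_\mu\iota(\mu)$ and $\covol_+(\Lambda)^{-1}=\inf_\mu\iota(\mu)$ these are the asserted equalities for Delone sets. I expect the careful bookkeeping of the coalescence bound, together with verifying invariance and tightness of the Krylov--Bogolyubov limits, to be the main technical obstacle.
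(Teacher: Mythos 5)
Your proposal is correct and follows essentially the same route as the paper: your Campbell-formula identification of $1/\covol_\mu(\Lambda)$ with the intensity $\iota(\mu)$ is \Cref{prop:covolume_average}, your semicontinuity-of-counting analysis with the $\rel(\Lambda)$ coalescence factor is \Cref{prop:beurling_chull}, \Cref{lem:continuity-counting} and \Cref{obs:semicont}, and your Krylov--Bogolyubov limits of Følner empirical averages along extremizing translates are exactly Steps 2--4 of the proof of \Cref{thm:density_covolume}. The technical obstacles you flag (the uniform Følner/Fubini intensity estimate, and the renormalization when the weak* limit charges the empty configuration in the $\covol_-$ bound) are precisely the points the paper works out in detail, and they resolve as you anticipate.
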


Hence, for amenable, unimodular groups, \Cref{thm:necessary_conditions} in combination with \Cref{thm:intro-compare-density-covolume-separeted} gives necessary conditions in terms of Beurling densities which generalizes the result \cite[Theorem 5.3]{FuGrHa17} for polynomial growth groups; see \Cref{cor:necessary_conditions_amenable} for a detailed statement. We also note that after the first version of the present paper was uploaded to arXiv, \Cref{thm:necessary_conditions} for the amenable, unimodular case has been deduced through different methods in \cite{Enva22} and \cite{Cava22}.

We stress that one should not expect $D^-(\Lambda) = 1/\covol_+(\Lambda)$ in general. For instance, a non-cocompact lattice has lower and upper covolume given by its usual lattice covolume, while its lower density is $0$.

\subsection{Proof techniques and technical comments}

The proof of \Cref{thm:necessary_conditions} is inspired by the proof of the lattice density theorem for coherent systems given in \cite{Rova22}, compare also \cite{Ja05}. This proof works in two steps: First, one deduces a relation between the frame bounds of a set of sampling, the covolume of $\Gamma$, and the critical density. This is done by first noting that the frame property passes to all left translates of $\Gamma$ and then integrating the frame inequalities over $G/\Gamma$. The second step is to prove that one can always pass from a frame to an associated Parseval frame (i.e.,\ with frame bounds $A=B=1$) of the same form, which in the lattice case follows from the fact that the frame operator commutes with the (projective) left regular representation of $\Gamma$ as an abstract group. Combined with step 1, one can deduce the necessary density conditions.

We will generalize this proof strategy to non-uniform point sets and follow the same overall two-step procedure. In the first step, we integrate the frame bounds over the punctured hull $\phull{\Lambda}$ with respect to an invariant measure. This requires a stability result for frames over limits of point sets in the Chabauty--Fell topology analogous to the weak limits of Beurling \cite{Be89}. The proof strategies are similar to the analogous results for Gabor frames \cite{He08,RaSt95}. The second step is more subtle as one needs a replacement for the left regular representation of lattices. One of the main conceptual tools of the paper to handle this are the notions of frame and Riesz vectors for groupoid representations which we introduce in \Cref{subsec:frame_vectors}. Specifically, we show that $\Lambda$ is a set of sampling (resp.\ interpolation) if and only if the associated kernel function $k_e$ is a frame vector (resp.\ Riesz vector) for an associated representation of a groupoid $\deloid{\Lambda}$. Passing to the associated Parseval frame gives a new frame vector for this groupoid representation, which is a consequence of the fact that the associated frame vector commutes with the left regular representation of the groupoid $\deloid{\Lambda}$.

We term the groupoid $\deloid{\Lambda}$ simply the \emph{groupoid of $\Lambda$.} It is given by the restriction of the transformation groupoid $G \ltimes \phull{\Lambda}$ to a canonical transversal $\dhull{\Lambda}$. For point sets in $G= \R^d$, these groupoids as well as the closely related tiling groupoids have been studied many times in operator algebras, noncommutative geometry and mathematical physics, see e.g.\ \cite{BoMe19,BoMe21,Be85,BeHeZa00,Ke95,Ke97,KePu00}. The idea that coherent frames over non-uniform point sets can be viewed as ``fibered'' over the unit space of a groupoid already appeared in Kreisel's work \cite{Kr15,Kr16}, from which we have drawn inspiration.

\subsection{Structure of the paper}

In \Cref{sec:point_sets} we define the lower and upper covolume of a point set and prove \Cref{thm:intro-compare-density-covolume-separeted}. In \Cref{sec:sampling_interpolation} we prove \Cref{thm:twisted_necessary_conditions} which includes \Cref{thm:necessary_conditions} as a special case. Then in \Cref{sec:examples} we give examples. At the end we have included \Cref{sec:point-set-groupoid} that characterizes when the groupoid of a point set is étale.

\subsection{Acknowledgements}

The first author would like to thank Petter Kjeverud Nyland and Jordy Timo van Velthoven for insightful discussions. The first author also acknowledges support from The Research Council of Norway through project 314048. The second author was supported by the Swedish Research Council through grant number 2018-04243.

\subsection{Notation}

Throughout the paper $G$ denotes a second-countable locally compact group with identity element $e$. We fix a Haar measure $m$ on $G$ and denote integration with respect to this measure by $\int_G \ldots \dif{x}$.

\section{Covolumes of point sets}\label{sec:point_sets}

The purpose of this section is to introduce the lower and upper covolumes of a point set appearing in \Cref{thm:necessary_conditions}, see \Cref{def:full_covolume}. In the context of amenable groups we prove \Cref{thm:intro-compare-density-covolume-separeted} which describes the relation between covolumes and Beurling densities, see \Cref{subsec:beurling}.

\subsection{The Chabauty--Fell topology}\label{subsec:chabauty-fell}

Denote by $\mathcal{C}(G)$ the set of closed subsets of $G$. The \emph{Chabauty--Fell topology} on $\mathcal{C}(G)$ is given by the subbasis consisting of the sets
\begin{align*}
    \mathcal{O}_{K} &= \{ C \in \mathcal{C}(G) : C \cap K = \emptyset \} , \\
    \mathcal{O}^V &= \{ C \in \mathcal{C}(G) : C \cap V \neq \emptyset \} ,
\end{align*}
where $K$ ranges over all compact subsets of $G$ and $V$ ranges over all open subsets of $G$. Alternatively, it is determined by an open neighborhood basis at each $C \in \mathcal{C}(G)$ consisting of the sets
\begin{align*}
    \mathcal{V}_{K,V}(C) = \{ D \in \mathcal{C}(G) : D \cap K \subseteq CV, C \cap K \subseteq DV \}
\end{align*}
where $K$ runs through all compact subsets of $G$ and $V$ runs through all open neighborhoods of the identity, cf.\ \cite[Proposition A.1]{BjHaPo18}. For the rest of the paper, all topological considerations on $\mathcal{C}(G)$ will be with respect to the Chabauty--Fell topology. The space $\mathcal{C}(G)$ is compact. Observe that since $G$ is second-countable, also $\mathcal{C}(G)$ is second-countable, so its topology is described by convergence of sequences.

The following well-known lemma describes convergence of sequences in $\mathcal{C}(G)$ and will occasionally be useful. See for example \cite[Proposition E.1.2]{BePe92} for a proof.

\begin{lemma}\label{lem:chab_conv}
Let $P_n, P \in \mathcal{C}(G)$ for each $n \in \N$. Then $P_n \to P$ if and only if both of the following statements hold:
\begin{enumerate}
    \item Whenever $x \in P$ then there exist $x_n \in P_n$ such that $x_n \to x$.
    \item Whenever $(n_k)_{k \in \N}$ is a subsequence of $\N$ and $x_{n_k} \in P_{n_k}$ with $x_{n_k} \to x \in G$, then $x \in P$.
\end{enumerate}
\end{lemma}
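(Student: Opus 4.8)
The plan is to recognise conditions (i) and (ii) as the lower- and upper-limit conditions of Kuratowski convergence, and to prove the equivalence by testing membership in the subbasic open sets $\mathcal{O}_K$ and $\mathcal{O}^V$ that generate the Chabauty--Fell topology. Since $G$ is second-countable, so is $\mathcal{C}(G)$, and a sequence $P_n$ converges to $P$ if and only if $P_n$ eventually lies in every subbasic neighbourhood of $P$: a basic neighbourhood is a finite intersection of such sets, and eventual membership in finitely many conditions is eventual membership in their intersection. The whole argument thus reduces to matching the two types of subbasic set against the two conditions.

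For the forward direction I would assume $P_n \to P$. To prove (i), fix $x \in P$ and choose a decreasing countable neighbourhood basis $V_1 \supseteq V_2 \supseteq \cdots$ of open sets at $x$, available since $G$ is first countable. Each $V_m$ meets $P$, so $P \in \mathcal{O}^{V_m}$, an open neighbourhood of $P$; hence there is $N_m$ with $P_n \cap V_m \neq \emptyset$ for $n \geq N_m$, and I may arrange $N_1 \leq N_2 \leq \cdots$ with $N_m \to \infty$. Choosing $x_n \in P_n \cap V_{m(n)}$ with $m(n) = \max\{m : N_m \leq n\}$ (and $x_n \in P_n$ arbitrary for the finitely many small $n$) yields $x_n \to x$, since $m(n) \to \infty$ and the $V_m$ shrink to $x$. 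To prove (ii), suppose $x_{n_k} \in P_{n_k}$ with $x_{n_k} \to x$ but $x \notin P$. As $P$ is closed and $G$ locally compact, the open set $G \setminus P$ contains a compact neighbourhood $K$ of $x$, so $P \in \mathcal{O}_K$ and therefore $P_n \cap K = \emptyset$ for all large $n$; but $x_{n_k}$ eventually lies in $\mathrm{int}(K)$, a contradiction.

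For the reverse direction I would assume (i) and (ii) and verify eventual membership in each subbasic neighbourhood of $P$. If $P \in \mathcal{O}^V$, pick $x \in P \cap V$; by (i) there are $x_n \in P_n$ with $x_n \to x$, so $x_n \in V$ and hence $P_n \in \mathcal{O}^V$ for large $n$, using that $V$ is open. If $P \in \mathcal{O}_K$, suppose for contradiction that $P_n \cap K \neq \emptyset$ for infinitely many $n$; choosing a point of each such intersection and using compactness of $K$, I extract a subsequence $x_{n_k} \to x \in K$ along which (ii) forces $x \in P$, contradicting $P \cap K = \emptyset$. Hence $P_n \in \mathcal{O}_K$ eventually, which completes the proof.

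The routine edge case of empty sets---where $x_n \in P_n$ can be chosen only once $P_n \neq \emptyset$, which holds eventually---I would dispatch in passing. The main obstacle is the $\mathcal{O}_K$ analysis, which is where local compactness of $G$ is indispensable: in the forward direction to produce a compact neighbourhood of $x$ separating it from the closed set $P$, and in the reverse direction to carry out the compactness extraction that feeds condition (ii). Everything else is bookkeeping with nested neighbourhood bases and the subbasis-to-convergence reduction.
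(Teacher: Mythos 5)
The paper does not prove this lemma at all; it labels it well-known and cites an external reference (Proposition E.1.2 of \cite{BePe92}), so there is no in-paper argument to compare against. Your proof is correct and is the standard one: reduce convergence to eventual membership in the subbasic sets $\mathcal{O}_K$ and $\mathcal{O}^V$, use a shrinking neighbourhood basis and a diagonal choice of indices for condition (i), and use local compactness plus sequential compactness of compact sets (valid since $G$ is lcsc, hence metrizable) for the $\mathcal{O}_K$ direction and for condition (ii). The only blemish is the one you already flag: as literally stated the lemma requires $x_n\in P_n$ for \emph{all} $n$, which fails if finitely many $P_n$ are empty; the usual reading (and the one consistent with how the paper applies the lemma) is that the approximants need only be defined for large $n$, and your treatment of this is adequate.
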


\subsection{Point sets}\label{subsec:points-sets}

Let $\Lambda$ be a subset of $G$. Given $S \subseteq G$ and $\ell \in \N$, we say that $\Lambda$ is
\begin{enumerate}
    \item \emph{$\ell$-relatively $S$-separated} if $|\Lambda \cap xS| \leq \ell$ for all $x \in G$;
    \item \emph{$S$-dense} if $|\Lambda \cap xS| \geq 1$ for all $x \in G$.
\end{enumerate}
For a fixed $\ell \in \N$ we say that $\Lambda$ is \emph{$\ell$-relatively separated} if there exists a nonempty open set $U \subseteq G$ (which can always be taken as a precompact, symmetric neighborhood of the identity) such that $\Lambda$ is $\ell$-relatively $U$-separated. A set is \emph{relatively separated}\footnote{The term \emph{uniformly locally finite} is also used in the literature.} if it is $\ell$-relatively separated for some $\ell \in \N$. Equivalently
\begin{equation}
    \rel_U(\Lambda) \coloneqq \sup_{x \in G}|\Lambda \cap xU| < \infty
\end{equation}
for some (equivalently all) nonempty open precompact sets $U \subseteq G$. We denote by
\begin{equation*}
    \rel(\Lambda) \coloneqq \inf_U \sup_{x \in G}|\Lambda \cap xU| ,
\end{equation*}
the minimal number $\ell = \rel(\Lambda)$ such that $\Lambda$ is $\ell$-relatively separated. If $\Lambda$ is $1$-relatively $U$-separated (resp.\ $1$-relatively separated) we simply say that $\Lambda$ is  \emph{$U$-separated} (resp.\ \emph{separated})\footnote{The term \emph{uniformly discrete} is also used in the literature.}.

On the other hand, if $\Lambda$ is $K$-dense for some compact set $K$, then $\Lambda$ is called \emph{relatively dense}. If $\Lambda$ is both separated and relatively dense, then $\Lambda$ is called a \emph{Delone set}.

\begin{proposition}\label{prop:U_discrete_closed}
Let $k, \ell \in \N$, let $U \subseteq G$ be open and let $K \subseteq G$ be compact. The following statements hold:
\begin{enumerate}
    \item The set $\{ C \in \mathcal{C}(G) : |C \cap U| \geq k \}$ is open in $\mathcal{C}(G)$.
    \item The set of $\ell$-relatively $U$-separated sets is closed in $\mathcal{C}(G)$.
    \item The set of all $K$-dense sets is closed in $\mathcal{C}(G)$.
\end{enumerate}
\end{proposition}

\begin{proof}
(i) Let $C \in \mathcal{C}(G)$ be such that $|C \cap U| \geq k$ so that there exist distinct elements $x^1,\ldots,x^k \in C \cap U$. Let $C_n \to C$ in $\mathcal{C}(G)$. By \Cref{lem:chab_conv} (i) we can find sequences $x_n^j \to x^j$ with $c_n^j \in C_n$ for each $1 \leq j \leq k$. Then for $j \neq j'$ we must have $x_n^j \neq x_n^{j'}$ eventually, and also $x_n^j \in U$ eventually. Hence $|C_n \cap U| \geq k$ eventually, which proves the claim.

(ii) The compliment of the collection of $\ell$-relatively $U$-separated sets is equal to
\[ \bigcup_{x \in G} \{ P \in \mathcal{C}(G) : |P \cap xU| \geq \ell + 1 \} \]
which is open by (i).

(iii) The compliment of the collection of $K$-dense sets is equal to $\bigcup_{x \in G} \mathcal{O}_{xK}$ which is open by definition of the Chabauty--Fell topology.
\end{proof}

The following proposition describes convergence of relatively separated sets. Its proof is similar to that of \cite[Corollary 4.7]{BjHa18}.

\begin{proposition}\label{prop:rel_sep_conv}
Let $(P_n)_{n \in \N}$ be a sequence of $\ell$-relatively $U$-separated sets that converges to a set $P$ in $\mathcal{C}(G)$. Let $K \subseteq G$ be a compact subset of $G$ such that $P \cap \partial K = \emptyset$. Write $P \cap K = \{ x^1, \ldots, x^k \}$. Then there exists an $N \in \N$ such that when $n \geq N$ the set $P_n \cap K$ can be partitioned into $k$ sets
\[ P_n \cap K = M_n^{1} \sqcup \cdots \sqcup M_n^{k} \]
of cardinalities satisfying $1 \leq |M_n^j| \leq \ell$ such that $M_n^j \to \{ x^j \}$ as $n \to \infty$ for each $1 \leq j \leq k$, that is $x_n \to x^j$ for every sequence of elements $x_n \in M_n^j$, $n \in \N$.
\end{proposition}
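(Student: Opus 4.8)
The plan is to exploit the characterization of Chabauty--Fell convergence from \Cref{lem:chab_conv} to first pin down the points of $P_n$ near each $x^j$, then use the $\ell$-relative $U$-separatedness uniformly to control cardinalities, and finally use the boundary condition $P \cap \partial K = \emptyset$ to ensure the partition is clean. First I would fix, for each $1 \leq j \leq k$, a small precompact neighborhood basis around $x^j$; since $P \cap K$ is finite (its cardinality is at most $\rel_U(P) \cdot$ (number of $U$-translates covering $K$), and $P$ is $\ell$-relatively $U$-separated by \Cref{prop:U_discrete_closed}(i)), I can choose pairwise disjoint open sets $V^1, \ldots, V^k$ with $x^j \in V^j$ and each $\overline{V^j} \subseteq \mathrm{int}(K)$. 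By condition (i) of \Cref{lem:chab_conv}, each $x^j$ is a limit of points $x_n^j \in P_n$, so for large $n$ we have $P_n \cap V^j \neq \emptyset$; I would define $M_n^j := P_n \cap V^j$.

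The key steps are then the following. First, the upper bound $|M_n^j| \leq \ell$: since $V^j$ can be taken inside a single translate $x^j U$ (shrinking $V^j$ if necessary so that $V^j \subseteq x^j U$), relative $U$-separatedness of $P_n$ forces $|P_n \cap V^j| \leq |P_n \cap x^j U| \leq \ell$. Second, the convergence $M_n^j \to \{x^j\}$: any convergent sequence of points drawn from the $M_n^j$ lies in $\overline{V^j}$, and by condition (ii) of \Cref{lem:chab_conv} its limit lies in $P \cap \overline{V^j} = \{x^j\}$ (here disjointness of the $V^j$ and the fact that $\overline{V^j}$ meets $P$ only in $x^j$ is used); since $\overline{V^j}$ is compact, every subsequence has a convergent sub-subsequence with limit $x^j$, giving $M_n^j \to \{x^j\}$. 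Third, the partition claim: I must show that for large $n$ all of $P_n \cap K$ is accounted for, i.e.\ $P_n \cap K = \bigsqcup_j M_n^j$. The disjointness is immediate from disjointness of the $V^j$, so the content is that no point of $P_n \cap K$ escapes the $\bigcup_j V^j$.

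The main obstacle is precisely this last point: ruling out ``stray'' points of $P_n$ in $K \setminus \bigcup_j V^j$ for large $n$. The plan is an argument by contradiction using condition (ii) of \Cref{lem:chab_conv}. If no such $N$ existed, there would be a subsequence $(n_i)$ and points $y_{n_i} \in P_{n_i} \cap \big(K \setminus \bigcup_j V^j\big)$; since $K$ is compact, after passing to a further subsequence $y_{n_i} \to y$ for some $y \in K$. By condition (ii) we have $y \in P$, hence $y \in P \cap K = \{x^1, \ldots, x^k\}$, so $y = x^j$ for some $j$. But then $y_{n_i}$ eventually lies in the open set $V^j$, contradicting $y_{n_i} \notin \bigcup_j V^j$. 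This uses compactness of $K$ to extract the convergent subsequence and condition (ii) to trap the limit inside $P$; the boundary hypothesis $P \cap \partial K = \emptyset$ is what guarantees the limit cannot sit on $\partial K$ in a way that would be missed, and it is also what lets me choose the $V^j$ with closures in the interior of $K$ so that the $x^j$ are genuinely interior points. Finally, taking $N$ to be the maximum of the finitely many thresholds obtained for each $j$ (existence of points in $V^j$) together with the threshold from the stray-point argument yields the desired uniform $N$, and the lower bound $|M_n^j| \geq 1$ holds for $n \geq N$ by construction.
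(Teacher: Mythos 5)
Your proposal is correct and follows essentially the same route as the paper's proof: disjoint neighborhoods of the $x^j$ inside the interior of $K$ contained in translates of $U$, \Cref{lem:chab_conv}(i) for the lower bound $|M_n^j|\geq 1$, relative $U$-separatedness for the upper bound, and an argument that $P_n\cap K$ has no stray points outside $\bigcup_j V^j$ for large $n$. The only (cosmetic) difference is that you rule out stray points by a sequential compactness argument via \Cref{lem:chab_conv}(ii), whereas the paper observes directly that $P$ lies in the open set $\mathcal{O}_{K'}$ for the compact set $K' = K\setminus (P\cap K)V$ and hence so does $P_n$ eventually; these are two formulations of the same fact.
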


There is a continuous left $G$-action on $\mathcal{C}(G)$ given by
\[ xC = \{ xy : y \in C \} \;\;\; \text{for $x \in G$ and $C \in \mathcal{C}(G)$} . \]
The compact space given by the closure of the orbit of a set $\Lambda \in \mathcal{C}(G)$ under this action is called the \emph{hull} of $\Lambda$ and is denoted by
\[ \chull{\Lambda} = \overline{ \{ x\Lambda : x \in G \} } \subseteq \mathcal{C}(G) .\]
For $G=\R^d$ this is precisely the set of weak limits of $\Lambda$ studied by Beurling \cite{Be89}. The \emph{punctured hull} of $\Lambda$ is the locally compact space $\phull{\Lambda} = \chull{\Lambda} \setminus \{ \emptyset \}$. Note that $\Lambda$ is relatively dense if and only if $\emptyset \notin \chull{\Lambda}$, so that $\phull{\Lambda} = \chull{\Lambda}$ is compact in this case. 

A useful estimate for arbitrary elements in the hull that can be obtained as a corollary of \Cref{prop:U_discrete_closed} is the following one.

\begin{corollary}\label{cor:rel_sep_cpt}
Let $U$ be a symmetric identity neighborhood in $G$ and let $\Lambda \subseteq G$ be an $\ell$-relatively $U$-separated set.  For every compact set $K \subseteq G$ we have that
\[ |P \cap K| \leq \ell \cdot \frac{m(KU) }{m(U)} \text{,} \quad \text{ for all } P \in \chull{\Lambda}\text{.}  \]
\end{corollary}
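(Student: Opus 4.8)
The plan is to reduce the statement to a single counting estimate for an arbitrary $\ell$-relatively $U$-separated set, and then to prove that estimate by a Haar-measure averaging argument. First I would show that every $P \in \chull{\Lambda}$ is itself $\ell$-relatively $U$-separated, so that the hypothesis on $\Lambda$ propagates to the whole hull. Indeed, for each $x \in G$ the translate $x\Lambda$ satisfies $|x\Lambda \cap yU| = |\Lambda \cap x^{-1}yU| \leq \ell$ for every $y \in G$, so each orbit element $x\Lambda$ is $\ell$-relatively $U$-separated. Thus the orbit $\{ x\Lambda : x \in G \}$ is contained in the set $\mathcal{S}$ of all $\ell$-relatively $U$-separated members of $\mathcal{C}(G)$, which is closed by \Cref{prop:U_discrete_closed}(i). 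Consequently $\mathcal{S}$ contains the closure $\chull{\Lambda}$ of the orbit, and we conclude that $|P \cap yU| \leq \ell$ for all $P \in \chull{\Lambda}$ and all $y \in G$.

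With this reduction in hand, I would fix such a $P$ and a compact set $K$ and first observe that $P \cap K$ is finite: covering $K$ by finitely many left translates of $U$ (possible by compactness) and applying the bound $\ell$ on each translate bounds $|P \cap K|$. The key computation then uses the symmetry of $U$: for any $p \in G$ one has $p \in xU \iff x^{-1}p \in U \iff x \in pU$, since $U = U^{-1}$. Therefore, writing the integrand as a finite sum of indicator functions,
\[ |P \cap K \cap xU| = \sum_{p \in P \cap K} \mathbbm{1}_{pU}(x) , \]
which is manifestly measurable in $x$, and integrating over $G$ while invoking left-invariance of $m$ gives
\[ \int_G |P \cap K \cap xU| \dif{x} = \sum_{p \in P \cap K} m(pU) = |P \cap K| \cdot m(U) . \]

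To obtain the upper bound I would estimate the same integral from above. The integrand satisfies $|P \cap K \cap xU| \leq |P \cap xU| \leq \ell$ by Step 1, and it vanishes unless $xU \cap K \neq \emptyset$; but if $p \in K \cap xU$ then $x \in pU \subseteq KU$, so the integrand is supported on $KU$. Hence
\[ |P \cap K| \cdot m(U) = \int_{KU} |P \cap K \cap xU| \dif{x} \leq \ell \cdot m(KU) , \]
and dividing by $m(U)$ yields the claimed inequality. I do not expect a genuine obstacle here; the only points requiring care are the passage from the orbit to the full hull via translation-invariance together with the closedness in \Cref{prop:U_discrete_closed}(i), and getting the direction of the symmetry identity $\{ x : p \in xU \} = pU$ and the support localization $\{ x : xU \cap K \neq \emptyset \} \subseteq KU$ exactly right.
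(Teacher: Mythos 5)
Your proof is correct and follows essentially the same route as the paper: both reduce to the fact that every $P \in \chull{\Lambda}$ is $\ell$-relatively $U$-separated via \Cref{prop:U_discrete_closed}, then use the symmetry identity $\mathbbm{1}_{yU}(x) = \mathbbm{1}_{xU}(y)$ to compute $\int_G |P \cap K \cap xU|\dif{x} = |P\cap K|\, m(U)$ and bound the integrand by $\ell$ on its support $KU$. The only difference is that you spell out the closure argument and the finiteness of $P \cap K$ a bit more explicitly, which the paper leaves implicit.
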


\begin{proof}
Let $P \in \chull{\Lambda}$. Since $U$ is symmetric we have that $x \in yU$ if and only if $y \in xU$ for any $x,y \in G$. Using this and the fact that $P$ is $\ell$-relatively separated by \Cref{prop:U_discrete_closed} we obtain for every $x \in G$ that
\[ \sum_{y \in P \cap K} \mathbbm{1}_{yU}(x) = \sum_{y \in P \cap K} \mathbbm{1}_{xU}(y) = |P \cap K \cap xU| \leq |P \cap xU| \leq \ell . \]
Note also that if $x \in yU$ for some $y \in K$ then $x \in KU$. Consequently
\[ m(U)|P \cap K| = \int_G \sum_{y \in P \cap K} \mathbbm{1}_{yU}(x) \dif{x} = \int_{KU} \sum_{y \in P \cap K} \mathbbm{1}_{yU}(x) \dif{x} \leq m(KU) \cdot \ell , \]
and the conclusion follows.
\end{proof}

The \emph{transversal}\footnote{The transversal is also sometimes called the \emph{discrete hull}.} of $\Lambda$ is
\[ \dhull{\Lambda} = \{ P \in \chull{\Lambda} : e \in P \} \subseteq \phull{\Lambda}. \]
Note that the transversal is compact since it is closed in $\chull{\Lambda}$. Alternatively $\dhull{\Lambda}$ is the closure of the set $\{ \lambda^{-1}\Lambda : \lambda \in \Lambda \}$. If $\Gamma$ is a lattice in $G$, then $\dhull{\Gamma} = \{ \Gamma \}$, $\phull{\Gamma} = \{ x \Gamma : x \in G \}$ and $\phull{\Gamma}$ is homeomorphic as a $G$-space to the quotient space $G/\Gamma$.

\subsection{Transverse measures}\label{subsec:transverse}

We will be concerned with finite $G$-invariant Radon measures on $\phull{\Lambda}$ which we will just refer to as finite invariant measures. We let $\mathcal{P}_G(\phull{\Lambda})$ denote the set of $G$-invariant probability measures on $\phull{\Lambda}$. The following proposition will be a key result for us. It is a special case of Connes' transverse measure theory introduced in \cite{connes79}. A proof in the setting of equivalence relations can be found in \cite[Proposition 4.3]{KyPeVa15}. Note that transverse measures for hulls of point sets were also used in \cite{BjHaKa21}.

\begin{proposition}\label{prop:transverse_measure}
Let $\Lambda \subseteq G$ be relatively separated. Then for every non-zero finite invariant measure $\mu$ on $\phull{\Lambda}$ there exists a unique measure $\mu_0$ on $\dhull{\Lambda}$ such that
\begin{equation}
    \int_{\phull{\Lambda}} \sum_{x \in P} F(x,x^{-1}P) \dif{\mu(P)} = \int_{\dhull{\Lambda}} \int_G F(x,Q) \dif{x} \dif{\mu_0(Q)} \label{eq:transverse}
\end{equation}
for all $F \in C_c(G \times \dhull{\Lambda})$. The measure $\mu_0$ is called the \emph{transverse measure} associated to $\mu$ and is finite and non-zero.
\end{proposition}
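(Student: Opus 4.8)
The plan is to define a positive linear functional on $C_c(G \times \dhull{\Lambda})$ by the left-hand side of \eqref{eq:transverse} and then exploit its invariance under translation in the $G$-variable to extract $\mu_0$ via uniqueness of Haar measure. For $F \in C_c(G \times \dhull{\Lambda})$ write $\Psi_F(P) = \sum_{x \in P} F(x, x^{-1}P)$ and $\Phi(F) = \int_{\phull{\Lambda}} \Psi_F(P) \dif{\mu(P)}$. First I would check $\Phi$ is well defined: if $\supp F \subseteq K \times \dhull{\Lambda}$ with $K \subseteq G$ compact, then only the finitely many $x \in P \cap K$ contribute to $\Psi_F(P)$, and \Cref{cor:rel_sep_cpt} bounds $|P \cap K|$ by $\ell\, m(KU)/m(U)$ uniformly over $P \in \chull{\Lambda}$; hence $|\Psi_F| \leq \|F\|_\infty \cdot \ell\, m(KU)/m(U)$, so the integral converges (and $\Phi$ is bounded on functions supported in a fixed $K$) once measurability of $\Psi_F$ is established.

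The measurability of $\Psi_F$ is the first real point, and I expect it to be the main technical obstacle, since $\Psi_F$ is genuinely discontinuous when $\Lambda$ fails to be separated (points of $P$ may merge under Chabauty--Fell limits, altering the cluster count). I would reduce to $F \geq 0$ by splitting into real and imaginary, positive and negative parts, and then show $\Psi_F$ is lower semicontinuous. Given $P_n \to P$, choose a compact $K$ with $\supp F \subseteq K \times \dhull{\Lambda}$ and $P \cap \partial K = \emptyset$; then \Cref{prop:rel_sep_conv} partitions $P_n \cap K = \bigsqcup_j M_n^j$ with $M_n^j \to \{x^j\}$ and $|M_n^j| \geq 1$, where $P \cap K = \{x^1, \dots, x^k\}$. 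Continuity of the action and of $F$ gives $F(x, x^{-1}P_n) \to F(x^j, (x^j)^{-1}P)$ for $x \in M_n^j$, and since $F \geq 0$ each cluster contributes at least $F(x^j, (x^j)^{-1}P)$ in the limit, so $\liminf_n \Psi_F(P_n) \geq \sum_j F(x^j, (x^j)^{-1}P) = \Psi_F(P)$. Thus $\Psi_F$ is lower semicontinuous, hence Borel, and $\Phi$ is a well-defined positive linear functional.

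The conceptual core is translation invariance. Writing $(R_yF)(x,Q) = F(yx,Q)$, the substitution $x = yz$ in the defining sum yields $\Psi_{R_yF}(P) = \Psi_F(yP)$, so $G$-invariance of $\mu$ gives $\Phi(R_yF) = \Phi(F)$ for all $y \in G$. By the Riesz representation theorem $\Phi$ corresponds to a Radon measure $\nu$ on $G \times \dhull{\Lambda}$, and the invariance says precisely that $\nu$ is invariant under the free and proper action $y \cdot (x,Q) = (yx,Q)$, whose orbit space is $\dhull{\Lambda}$. Here unimodularity of $G$ is essential (it makes the pushforward of Haar along orbits well behaved). By the classification of invariant Radon measures for this action — a parametrised form of the uniqueness of Haar measure, valid for every unimodular lcsc group and requiring no amenability — one gets $\nu = m \otimes \mu_0$ for a unique Radon measure $\mu_0$ on $\dhull{\Lambda}$, which is exactly \eqref{eq:transverse}. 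Uniqueness of $\mu_0$ is then immediate by testing against $F = \phi \otimes h$ with $\int_G \phi\,\dif{x} = 1$.

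Finally I would record that $\mu_0$ is finite and non-zero. Finiteness is automatic: $\dhull{\Lambda}$ is compact, being closed in $\chull{\Lambda}$, so the Radon measure $\mu_0$ has finite total mass. For non-vanishing, suppose $\mu_0 = 0$, hence $\Phi = 0$; testing on $F = \phi \otimes 1$ with $\phi \in C_c(G)$, $\phi \geq 0$, gives $\int_{\phull{\Lambda}} \sum_{x \in P} \phi(x)\,\dif{\mu(P)} = 0$, so $P \cap \{\phi > 0\} = \emptyset$ for $\mu$-almost every $P$. Choosing a countable family $(\phi_n)$ with $\bigcup_n \{\phi_n > 0\} = G$ (possible since $G$ is $\sigma$-compact) forces $P = \emptyset$ for $\mu$-almost every $P$; as $\phull{\Lambda}$ contains no empty set this means $\mu = 0$, a contradiction. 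Hence $\mu_0 \neq 0$, completing the argument.
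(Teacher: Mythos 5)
Your proposal is correct and follows essentially the same route as the paper: both build the $G$-invariant measure on $G \times \dhull{\Lambda}$ obtained by integrating the counting measures along point sets against $\mu$ (your functional $\Phi$ is exactly the paper's pushforward $h_*\nu$), and then factor out Haar measure by uniqueness of invariant measures on $G$. The only substantive addition is your lower-semicontinuity argument for the measurability of $P \mapsto \sum_{x \in P} F(x,x^{-1}P)$, a point the paper's proof passes over silently; your finiteness and non-vanishing arguments are likewise sound.
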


Note that we do not necessarily assume non-zero finite invariant measures $\mu$ on $\phull{\Lambda}$ to be probability measures, i.e.,\ $\mu(\phull{\Lambda}) = 1$. Indeed, another natural normalization of $\mu$ is to instead assume that the associated transverse measure $\mu_0$ on $\dhull{\Lambda}$ is a probability measure. For $f \in C_c(G)$, the transverse measure formula gives
\[ \mu_0(\dhull{\Lambda}) \int_G f(x) \dif{x} = \int_{\dhull{\Lambda}} \int_G f(x) \dif{x} \dif{\mu_0(Q)} = \int_{\phull{\Lambda}} \sum_{x \in P} f(x) \dif{\mu(P)} .\]
Hence, if $\mu$ is normalized such that $\mu_0(\dhull{\Lambda}) = 1$, the following generalization of \emph{Weil's formula} holds:
\begin{equation} \int_G f(x) \dif{x} = \int_{\phull{\Lambda}} \sum_{x \in P} f(x) \dif{\mu(P)}, \quad \text{for all } f \in C_c(G) . \label{eq:weil} \end{equation}

\begin{definition}\label{def:full_covolume}
Let $\Lambda \subseteq G$ be a relatively separated set and let $\mu$ be a non-zero finite invariant measure on $\phull{\Lambda}$. The \emph{covolume} of $\Lambda$ with respect to $\mu$ is the positive real number
\[ \covol_\mu(\Lambda) = \frac{\mu(\chull{\Lambda})}{\mu_0(\dhull{\Lambda})} .\]
Furthermore, we define the \emph{lower} and \emph{upper covolume} of $\Lambda$ to be the extended real-valued numbers given respectively by
\begin{align*}
    \covol_-(\Lambda) = \inf_{\mu \in \mathcal{P}_G(\phull{\Lambda})} \covol_\mu(\Lambda), && \covol_+(\Lambda) = \sup_{\mu \in \mathcal{P}_{G}(\phull{\Lambda})} \covol_\mu(\Lambda),
\end{align*}
where it is understood that $\covol_-(\Lambda) = \infty$ (resp.\ $\covol_+(\Lambda) = 0$) if $\mathcal{P}_G(\phull{\Lambda}) = \emptyset$.
\end{definition}

\begin{remark}
Note that for any $c > 0$ and any non-zero finite invariant measure $\mu$ on $\phull{\Lambda}$, we have $\covol_{c\mu}(\Lambda) = \covol_\mu(\Lambda)$, hence any particular normalization of $\mu$ does not affect the value of the covolume. If we choose $\mu$ to be a probability measure, then $\covol_\mu(\Lambda) = 1/\mu_0(\dhull{\Lambda})$. On the other hand, if we choose $\mu$ such that Weil's formula \eqref{eq:weil} holds, then $\covol_\mu(\Lambda) = \mu(\phull{\Lambda})$.
\end{remark}

\begin{remark}
The covolume of a point set $\Lambda$ with respect to a finite invariant measure $\mu$ does depend on the choice of Haar measure $m$ on $G$. Indeed, if one rescales the Haar measure by some $c > 0$, then by \eqref{eq:transverse} the transverse measure rescales by $c^{-1}$, so that the covolume rescales by $c$.
\end{remark}

\begin{remark}
Let $\Gamma$ be a lattice in $G$. Then its usual covolume $\covol(\Gamma)$ is given by $\mu(G/\Gamma)$ where $\mu$ is the invariant measure on $G/\Gamma$ normalized such that Weil's formula holds. Since $\phull{\Gamma} = G/\Gamma$, it follows $\covol_\mu(\Gamma) = \covol(\Gamma)$, hence $\covol_-(\Gamma) = \covol_+(\Gamma) = \covol(\Gamma)$.
\end{remark}

In the next proposition we will obtain useful and natural bounds on the covolume.
\begin{proposition}\label{prop:covolume_average}
For any Borel set $S \subseteq G$ with $0 < m(S) < \infty$ and non-zero finite invariant measure $\mu$ on $\phull{\Lambda}$, we have that
\[ \frac{1}{\covol_\mu(\Lambda)} = \frac{1}{\mu(\phull{\Lambda})} \int_{\phull{\Lambda}} \frac{|P \cap S|}{m(S)} \dif{\mu(P)} . \]
In particular, the following statements hold:
\begin{enumerate}
    \item If $\Lambda$ is $K$-dense for some compact subset $K \subseteq G$, then $\covol_+(\Lambda) \leq m(K)$.
    \item If $\Lambda$ is $\ell$-relatively $U$-separated for some $\ell \in \N$ and open set $U \subseteq G$, then $\covol_-(\Lambda) \geq m(U)/\ell$.
\end{enumerate}
\end{proposition}

\begin{proof}
The transverse measure formula \eqref{eq:transverse} gives
\[ \mu_0(\dhull{\Lambda}) m(S) = \int_{\dhull{\Lambda}} \int_G \mathbbm{1}_S(x) \dif{x} \dif{\mu_0(Q)} = \int_{\phull{\Lambda}} \sum_{x \in P} \mathbbm{1}_S(x) \dif{\mu(P)} = \int_{\phull{\Lambda}} |P \cap S| \dif{\mu(P)} . \]
Hence
\begin{equation}
    \frac{1}{\covol_\mu(\Lambda)} = \frac{\mu_0(\dhull{\Lambda})}{\mu(\phull{\Lambda})} = \frac{1}{\mu(\phull{\Lambda})} \int_{\phull{\Lambda}} \frac{|P \cap S|}{m(S)} \dif{\mu(P)} .
\end{equation}
In particular, if $\Lambda$ is $K$-dense, so that $|P \cap K| \geq 1$ for all $P \in \chull{\Lambda}$ by \Cref{prop:U_discrete_closed}, then $\covol_\mu(\Lambda)  \leq m(K)$. On the other hand, if $\Lambda$ is $\ell$-relatively $U$-separated, so that $|P \cap U| \leq \ell$ for all $P \in \chull{\Lambda}$ by \Cref{prop:U_discrete_closed}, then $\covol_\mu(\Lambda) \geq m(U)/\ell. $
\end{proof}

\begin{example}[Model sets]\label{ex:model_sets}
Following the presentation of \cite[Section 2.2]{BjHaPo18}, a \emph{cut-and-project scheme} is a triple $(G,H,\Gamma)$ where $G$ and $H$ are locally compact groups and $\Gamma$ is a lattice in $G \times H$ such that the projection map $p_G \colon G \times H \to G$ is injective when restricted to $\Gamma$ and the image of $\Gamma$ under the projection map $p_H \colon G \times H \to H$ is dense in $H$. If $W$ is a compact subset of $H$, a set of the form
\[ \Lambda = p_G(\Gamma \cap (G \times W)) \]
is called a \emph{weak model set} in $G$. The set $W$ is called the \emph{window}. If $W$ is Jordan-measurable (that is $\partial W$ has zero Haar measure) with dense interior, aperiodic and $\Gamma$-regular (that is $\partial W \cap p_H(\Lambda) = \emptyset$) then $\Lambda$ is called a \emph{regular} model set. For such model sets there exists a unique $G$-invariant probability measure $\mu$ on $\chull{\Lambda}$ \cite[Theorem 3.4]{BjHaPo18}. It follows from \cite[Proposition 4.13]{BjHaPo18} that
\[ \covol_-(\Lambda) = \covol_+(\Lambda) = \covol_\mu(\Lambda) = \frac{\covol(\Gamma)}{m_H(W)}. \]
\end{example}

\subsection{Relation to Beurling density}\label{subsec:beurling}

In this subsection we assume that $G$ is amenable. For such groups, the \emph{lower} and \emph{upper Beurling density} of a set $\Lambda \subseteq G$ are defined respectively as
\begin{align*}
    D^-(\Lambda) = \sup_{K} \inf_{x \in G} \frac{|\Lambda \cap xK|}{m(K)}, && D^+(\Lambda) = \inf_K \sup_{x \in G} \frac{|\Lambda \cap xK|}{m(K)} ,
\end{align*}
where the infimum and supremum are taken over all compact subsets $K$ of $G$ with positive measure.

A sequence $(F_n)_{n \in \N}$ of compact subsets of positive measure in $G$ is called a \emph{(right) strong Følner sequence} if for every compact set $K \subseteq G$, one has
\[ \lim_{n \to \infty} \frac{m(F_nK \cap F_n^cK)}{m(F_n)} = 0 . \]
The lower and upper Beurling densities may alternatively be described in terms of a strong Følner sequence as follows, cf.\ \cite[Proposition 5.14]{PoRiSt22}:
\begin{align*}
    D^-(\Lambda) = \lim_{n \to \infty} \inf_{x \in G} \frac{|\Lambda \cap xF_n|}{m(F_n)}, && D^+(\Lambda) = \lim_{n \to \infty} \sup_{x \in G} \frac{|\Lambda \cap xF_n|}{m(F_n)} .
\end{align*}
Note that for compactly generated groups of polynomial growth, balls defined with respect to a corresponding word metric form a Følner sequence, cf.\ \cite[Cororollary 1.10]{Br14}, so that Beurling densities can be defined with respect to balls as was done in e.g.\ \cite{FuGrHa17}.

It will be of interest to us to consider the following quantities.
\begin{definition}
\label{def:hull-beurling-denisity}
 The \emph{lower} and \emph{upper hull Beurling density} of $\Lambda$ respectively are
\begin{gather*}
    D^{--}(\Lambda) \coloneqq \sup_{K} \inf_{P \in \phull{\Lambda}} \frac{|P \cap K|}{m(K)}, \quad \text{ and } \quad D^{++}(\Lambda) \coloneqq \inf_K \sup_{P \in \phull{\Lambda}} \frac{|P \cap K|}{m(K)} ,
\end{gather*}
where the supremum and infimum are as before taken over all compact sets $K \subseteq G$ of positive measure.
\end{definition}

The following result describes the relationship between Beurling densities and hull Beurling densities.

\begin{proposition}
\label{prop:beurling_chull}
Let $\Lambda$ be a relatively separated subset of an amenable group $G$. Then
\begin{gather*}
    D^{--}(\Lambda) \leq D^-(\Lambda) \leq \rel(\Lambda) \cdot D^{--}(\Lambda), \quad \text{ and } \quad D^+(\Lambda) = D^{++}(\Lambda).
\end{gather*}
In particular, if $\Lambda$ is separated then $D^{--}(\Lambda) = D^-(\Lambda)$.
\end{proposition}

\begin{proof}
Let $\Lambda$ be $\ell$-relatively $U$-separated for some precompact, symmetric neighborhood $U$ of the identity of $G$. Note first that since $\{ x \Lambda : x \in G \} \subseteq \phull{\Lambda}$, we have that $D^{--}(\Lambda) \leq D^-(\Lambda)$ and $D^{++}(\Lambda) \geq D^+(\Lambda)$.

We consider first the lower densities. For a compact subset of positive measure $K \subseteq G$ and $P \in \phull{\Lambda}$, we can write $P \cap K = \{ x_1, \ldots, x_k \}$.  Since $P \in \phull{\Lambda}$, there exists $x \in G$ such that $x\Lambda \cap K \subseteq (P \cap K)U = x_1U \cup \cdots \cup x_kU$. Then
\[ |x\Lambda \cap K| \leq \sum_{j=1}^k|x\Lambda \cap x_jU| \leq k \ell = |P \cap K|\ell .\]
This shows that $\inf_{x \in G}|x\Lambda \cap K| \leq \ell \cdot \inf_{P \in \phull{\Lambda}}|P \cap K|$, hence $D^-(\Lambda) \leq \ell \cdot D^{--}(\Lambda)$.

For the upper densities, let $\epsilon > 0$ and pick a symmetric, precompact neighborhood $V$ of the identity such that the sets $( x_j V )_{j=1}^k$ are pairwise disjoint. Take $K$ and $P$ as before.  Since $P \in \phull{\Lambda}$ there exists $x \in G$ such that $P \cap K \subseteq x\Lambda V$. Hence for each $1 \leq j \leq k$ there exists $\lambda_j \in \Lambda$ such that $x\lambda_j \in x_j V \subseteq (P \cap K)V$.  In particular, $x\lambda_i \neq x \lambda_j$ for $i \neq j$. This shows that $|P \cap K| \leq |x\Lambda \cap KV|$. Now \Cref{cor:rel_sep_cpt} gives
\[ |x\Lambda \cap KV| - |x\Lambda \cap K| = |x\Lambda \cap (KV \setminus K)| \leq \ell \cdot \frac{m((KV \setminus K)U)}{m(U)} \leq \ell \cdot \frac{m(KL \cap K^cL)}{m(U)}  \]
where $L = \overline{VU}$. Combining the two inequalities we have
\[ \sup_{P \in \phull{\Lambda}} \frac{|P \cap K|}{m(K)} \leq \sup_{x \in G} \frac{|x\Lambda \cap K|}{m(K)} + \frac{\ell}{m(U)} \cdot \frac{m(KL \cap K^cL)}{m(K)}\text{.}\]
Since $G$ is amenable, taking infimums over all compact $K \subseteq G$ of positive measure, the Følner property \cite[Proposition 5.4 (iii)]{PoRiSt22} gives
\[ D^{++}(\Lambda) \leq D^{+}(\Lambda) + \frac{\ell}{m(U)} \cdot \inf_{K} \frac{m(KL \cap K^cL)}{m(K)} = D^+(\Lambda) .\]
\end{proof}

It need not be the case that $D^-(\Lambda) = D^{--}(\Lambda)$ for point sets that are not separated. For instance, the point set $\Lambda = \Z \cup \{ k+1/k : k \in \Z \setminus \{ 0 \} \}$ in $\R$ has $D^-(\Lambda) = 2$ but $D^{--}(\Lambda) = 1$.

We are next aiming to prove \Cref{thm:intro-compare-density-covolume-separeted}. To this end, we will need the following lemma.

\begin{lemma}\label{lem:continuity-counting}
For a relatively separated subset $\Lambda$ of $G$ the following hold:
\begin{enumerate}
    \item For any precompact, open neighborhood $U$ of the identity the function $\phull{\Lambda} \to \N$, $P \mapsto |P \cap U|$ is lower semi-continuous.
    \item For any compact set $K \subseteq G$ we have that $\rel(\Lambda)|P \cap K| \geq \limsup_{P' \to P}|P' \cap K|$. In particular, the function $\phull{\Lambda} \to \N$, $P \mapsto |P \cap K|$ is upper semi-continuous whenever $\Lambda$ is separated.
\end{enumerate}
\end{lemma}

\begin{proof}
(i): For $P \in \chull{\Lambda}$ we know that $|P \cap U|$ is finite.  Let $x_1, \dotsc, x_k$ be an enumeration of $P \cap U$ and pick pairwise disjoint open subset $V_1, \dotsc, V_k \subseteq U$ such that $x_i \in V_i$ for all $i$.  Then $|Q \cap U| \geq k$ for all $Q \in \mathcal{O}^{V_1} \cap \cdots \cap \mathcal{O}^{V_k}$.  This shows lower-semicontinuity.

(ii): Suppose $\Lambda$ is $\ell$-relatively $U$-separated. Let $P \cap K = \{ x_1, \ldots, x_k \}$ and let $V \subseteq U$ be a neighborhood of the identity such that the sets $x_jV$, $1 \leq j \leq k$, are pairwise disjoint. If $P_n \to P$ then there exists $N \in \N$ such that when $n \geq N$ we have $P_n \cap K \subseteq (P \cap K)V = \bigcup_{j=1}^k x_jV$. Hence
\[ |P_n \cap K| \leq \sum_{j=1}^k |P_n \cap x_j V | \leq k \cdot \ell = |P \cap K| \ell . \]
This shows the second claim.
\end{proof}

\begin{observation}\label{obs:semicont}
If $(\mu_n)_{n \in \N}$ is a sequence of Borel probability measures on a topological space $\Omega$ that converges in the weak* topology to a measure $\mu$, then it is well-known that $\int_\Omega f \dif{\mu} \geq \limsup_n \int_\Omega f \dif{\mu_n}$ for upper semi-continuous functions $f \colon \Omega \to \R$ bounded from above and $\int_\Omega f \dif{\mu} \leq \liminf_n \int_\Omega f \dif{\mu_n}$ for lower semi-continuous functions $f \colon \Omega \to \R$ bounded from below.

More generally, if $f$ is bounded from above and satisfies the condition $\ell \cdot f(\omega) \geq \limsup_{\omega' \to \omega} f(\omega')$ as in \Cref{lem:continuity-counting}, then $\ell \cdot \int_\Omega f \dif{\mu} \geq \limsup_n \int_\Omega f \dif{\mu_n}$. This can be seen by using the semi-continuity of the auxiliary function $\tilde{f} \colon \Omega \to \R$ given by
\[ \tilde{f}(\omega) = \inf_{U \in \mathcal{U}_\omega} \sup_{\omega' \in U} f(\omega'), \quad \omega \in \Omega , \]
where $\mathcal{U}_\omega$ denotes the set of open neighborhoods of $\omega$.
\end{observation}

The following theorem establishes the exact relationship between hull Beurling densities and covolumes for relatively separated and relatively dense sets.

\begin{theorem}\label{thm:density_covolume}
Let $\Lambda$ be a relatively separated subset of an amenable group $G$. Then
\begin{align*}
    D^{--}(\Lambda) \leq \frac{1}{\covol_+(\Lambda)} , && \rel(\Lambda)^{-1} \cdot D^{++}(\Lambda) \leq \frac{1}{\covol_{-}(\Lambda)} \leq D^{++}(\Lambda) \text{.}
\end{align*}
If $\Lambda$ is relatively dense, then we have
\[ D^{--}(\Lambda) = \frac{1}{\covol_+(\Lambda)} \text{.} \]
\end{theorem}

\begin{proof}
We let $\Lambda$ be $\ell$-relatively $U$-separated for a precompact, symmetric, open neighborhood $U$ of the identity. We will assume that $\phull{\Lambda}$ admits at least one $G$-invariant probability measure, and at the end of the proof show how the theorem statement is also true if no such measures exist.

\textbf{Step 1: Bounding the reciprocal of covolume.}
Note that for any $G$-invariant probability measure $\mu$ on $\phull{\Lambda}$ and any compact set $K \subseteq G$ of positive measure we have that
\[ \inf_{P \in \phull{\Lambda}} \frac{|P \cap K|}{m(K)} \leq \int_{\phull{\Lambda}} \frac{|P \cap K|}{m(K)} \dif{\mu(P)} \leq \sup_{P \in \phull{\Lambda}} \frac{|P \cap K|}{m(K)} .\]
By \Cref{prop:covolume_average} the middle term above equals $1/\covol_\mu(\Lambda)$. Taking infima and suprema over all compact sets $K \subseteq G$ of positive measure in this double inequality, we arrive at
\[ 
D^{--}(\Lambda) \leq \frac{1}{\covol_\mu(\Lambda)} \leq D^{++}(\Lambda)
\text{.}\]
The rest of the proof is occupied by the construction of invariant probability measures $\mu$ for which $1/\covol_\mu(\Lambda)$ attains $D^{--}(\Lambda)$ and $D^{++}(\Lambda)/\ell$, respectively, the attainment of $D^{--}(\Lambda)$ being under the additional assumption that $\Lambda$ is relatively dense.

\textbf{Step 2: Følner sequence estimate.}
Choose a strong Følner sequence $(F_n)_{n \in \N}$ for $G$ and define for each $n \in \N$ and $P \in \chull{\Lambda}$ a probability measure $\mu_{n,P}$ on $\chull{\Lambda}$ via
\[ \int_{\chull{\Lambda}} f \dif{\mu_{n,P}} = \frac{1}{m(F_n)} \int_{F_n} f(x^{-1}P) \dif{x}  \]
for nonnegative Borel functions $f$ on $\chull{\Lambda}$. In other words, $\mu_{n,P}$ is the convolution the Dirac measure $\delta_{P}$ on $\chull{\Lambda}$ with the measure $1/m(F_n) \cdot \mathbbm{1}_{F_n^{-1}} \cdot m$ on $G$. We will show that
\begin{equation}
    \lim_{n \to \infty} \Big| \int_{\chull{\Lambda}} \frac{|P' \cap U|}{m(U)} \dif{\mu_{n,P}}(P') - \frac{|P \cap F_n|}{m(F_n)} \Big| = 0 \;\; \text{uniformly in $P$.} \label{eq:uniform_conv}
\end{equation}

Fix $P \in \chull{\Lambda}$. For $x \in F_n$ we claim that
\begin{equation}
    |P \cap xU| = \sum_{y \in P \cap F_nU} \mathbbm{1}_{F_n \cap yU}(x) . \label{eq:count}
\end{equation}
Indeed, if $y \in P \cap xU$ then $y \in P \cap F_nU$ and $x^{-1}y \in U$ which implies $y^{-1}x \in U^{-1}=U$, hence $x \in yU$. Thus $\mathbbm{1}_{F_n \cap yU}(x) = 1$. On the other hand, if $\mathbbm{1}_{F_n \cap yU}(x) = 1$ for some $y \in P \cap F_nU$ then $x \in yU$ so $y \in xU$, hence $y \in P \cap xU$.

Using \eqref{eq:count}, we obtain
\begin{equation}  \label{eq:counting-integral-upper-estimate}
\begin{aligned}
     \int_{\chull{\Lambda}} |P' \cap U| \dif{\mu_{n,P}}(P') &= \frac{1}{m(F_n)} \int_{F_n} |P \cap xU| \dif{x} \\
    &= \frac{1}{m(F_n)}  \sum_{y \in P \cap F_nU} \int_{G} \mathbbm{1}_{F_n \cap yU}(x) \dif{x} \\
    &\leq \frac{1}{m(F_n)}  \sum_{y \in P \cap F_nU} \int_{G} \mathbbm{1}_{yU}(x) \dif{x} \\
    &= m(U)\frac{|P \cap F_nU|}{m(F_n)}.
\end{aligned}
\end{equation}
On the other hand, using the fact that $\mathbbm{1}_{F_n \cap yU}(x) = \mathbbm{1}_{yU}(x)$ for $x \in F_n \setminus F_n^cU$ we get
\begin{align*}
    \int_{\chull{\Lambda}} |P' \cap U| \dif{\mu_{n,P}}(P') &= \frac{1}{m(F_n)}  \sum_{y \in P \cap F_nU} \int_{G} \mathbbm{1}_{F_n \cap yU}(x) \dif{x} \\
    &\geq \frac{1}{m(F_n)}  \sum_{y \in P \cap (F_n \setminus F_n^cU)} \int_{G} \mathbbm{1}_{yU}(x) \dif{x} \\
    &= m(U) \frac{|P \cap (F_n \setminus F_n^cU)|}{m(F_n)} .
\end{align*}
Combining these two inequalities and writing $K = \overline{U}$, we arrive at
\[ \frac{|P \cap (F_n \setminus F_n^cK)|}{m(F_n)} \leq \frac{1}{m(U)} \int_{\chull{\Lambda}} |P' \cap U| \dif{\mu_{n,P}(P')} \leq \frac{|P \cap F_{n}K|}{m(F_n)} \text{.} \]
In order to compare the left and right hand side above, we apply \Cref{cor:rel_sep_cpt} to find that
\[ |P \cap (F_nK \cap F_n^cK)| \leq C \cdot m((F_nK \cap F_n^cK)U) \leq C \cdot m(F_nK^2 \cap F_n^c K^2)  \]
where $C = \ell/m(U)$. Hence
\begin{align*}
    0 &\leq \frac{|P \cap F_n K|}{m(F_n)} - \frac{|P \cap (F_n \setminus F_n^c K)|}{m(F_n)}
    \\ &= \frac{|P \cap (F_n K \setminus (F_n \setminus F_n^cK) )|}{m(F_n)} \\
    &= \frac{ |P \cap (F_nK \cap F_n^cK)| }{m(F_n)} \\
    &\leq C \cdot \frac{ m(F_nK^2 \cap F_n^cK^2)}{m(F_n)} .
\end{align*}
By definition of a strong Følner sequence the final expression above goes to zero as $n$ tends to infinity. Since it is independent of $P$ and the term $|P \cap F_n|/m(F_n)$ is also bounded between the same two expressions, we have shown that \eqref{eq:uniform_conv} holds.

\noindent \textbf{Step 3: The lower density.} For this step we assume that $\Lambda$ is relatively dense, so that $\chull{\Lambda} = \phull{\Lambda}$. For each $n \in \N$ let $P_n \in \chull{\Lambda}$ be such that
$|P_n \cap F_n| = \inf_{P \in \chull{\Lambda}} |P \cap F_n|$. It then follows from the definition of the lower hull Beurling density that
\[ D^{--}(\Lambda) \geq \liminf_{n \to \infty} \frac{|P_n \cap F_n|}{m(F_{n})} .\]
Combining this with the uniform convergence in \eqref{eq:uniform_conv} we get
\begin{equation}
     D^{--}(\Lambda) \geq \liminf_{n \to \infty} \int_{\chull{\Lambda}} \frac{|P' \cap U|}{m(U)} \dif{\mu_{n,P_n}}(P') \text{.} \label{eq:lower_density_limit}
\end{equation}
Passing to a subsequence, we may assume by the Banach--Alaoglu theorem that $(\mu_{n,P_n})_n$ converges in the weak* topology to a measure $\mu$ in the unit ball of the dual space of $C(\chull{\Lambda})$, which is necessarily a probability measure due to the compactness of $\chull{\Lambda}$. We claim that $\mu$ is $G$-invariant. Indeed, letting $f \in C(\chull{\Lambda})$ and letting $y \in G$ we obtain
\begin{align*}
    & \Big| \int_{\chull{\Lambda}} f(P) \dif{\mu_{n,P_n}(P)} - \int_{\chull{\Lambda}} f(y^{-1}P) \dif{\mu_{n,P_n}(P)} \Big| \\
    &= \frac{1}{m(F_{n})} \Big| \int_{F_n}f(x^{-1}P_n) \dif{x} - \int_{F_n y}f(x^{-1}P_n) \dif{x} \Big| \\
    &\leq \frac{1}{m(F_{n})} \int_{F_n \triangle F_n y} |f(x^{-1}P_n)| \dif{x} \\
    &\leq \sup_{P \in \chull{\Lambda}}|f(P)| \cdot \frac{m(F_{n} \triangle F_{n} y)}{m(F_{n})} .
\end{align*}
Since $(F_n)_n$ is a Følner sequence, the last term above tends to $0$ as $k \to \infty$, which shows that $\mu$ is $G$-invariant.

By \Cref{lem:continuity-counting}, the function the function $\chull{\Lambda} \to [0, \infty)$, $P' \mapsto |P' \cap U|$ is lower semi-continuous.  Since it is also bounded below, the weak* convergence $\mu_{n,P_n} \to \mu$ combined with the inequality \eqref{eq:lower_density_limit} gives that
\[ \frac{1}{\covol_\mu(\Lambda)} = \int_{\chull{\Lambda}} \frac{|P' \cap U|}{m(U)} \dif{\mu(P')} \leq \liminf_{n \to \infty} \int_{\chull{\Lambda}} \frac{|P' \cap U|}{m(U)} \dif{\mu_{n,P_n}(P')} \leq D^{--}(\Lambda)\text{.} \]
Since $D^{--}(\Lambda) \leq 1/\covol_+(\Lambda)$ was shown in Step 1, we conclude that $D^{--}(\Lambda) = 1/\covol_+(\Lambda)$.

\textbf{Step 4: The upper density.}
We assume as before that $\Lambda$ is $\ell$-relatively $U$-separated for a relatively compact neighborhood $U$ of the identity. Set $K = \overline{U}$. We assume without loss of generality that $m(\partial U) = 0$, from which it follows that $m(U) = m(K)$. As in Step 3 we can find elements $Q_n \in \phull{\Lambda}$ such that
\[ D^{++}(\Lambda) = \lim_{n \to \infty} \frac{|Q_n \cap F_n|}{m(F_n)} . \]
We let $\nu$ be a weak* limit of $(\mu_{n,Q_n})_{n \in \N}$ which is $G$-invariant by the same argument as in Step 3. Using again Step 2, \Cref{lem:continuity-counting} (ii) and \Cref{obs:semicont} we can write
\begin{align*}
    D^{++}(\Lambda) &=  \lim_n \int_{\chull{\Lambda}} \frac{|P' \cap U|}{m(U)} \dif{\mu_{n,Q_n}(P')} \\
    &\leq \lim_n \int_{\chull{\Lambda}} \frac{|P' \cap K|}{m(K)} \dif{\mu_{n,Q_n}(P')} \\
    &\leq \ell \cdot \int_{\chull{\Lambda}} \frac{|P' \cap K|}{m(K)} \dif{\nu(P')} \\
    &= \ell \cdot \int_{\phull{\Lambda}} \frac{|P' \cap K|}{m(K)} \dif{\nu(P')}
\end{align*}
Note that $\nu$ is a measure on $\chull{\Lambda}$ and not necessarily on $\phull{\Lambda}$, a problem that we will circumvent as follows: First, if $D^{++}(\Lambda) = 0$ then $0 \leq 1/\covol_-(\Lambda) \leq D^{++}(\Lambda) = 0$, in which case there is nothing to prove. Assume therefore that $D^{++}(\Lambda) > 0$. It then follows from the above estimate that $c \coloneqq \nu(\{ \emptyset \}) < 1$. But then we can define a new measure $\nu'$ on $\chull{\Lambda}$ via
\[ \nu'(S) = (1-c)^{-1} \nu(S \setminus \{ \emptyset \}), \quad \text{$S \subseteq \chull{\Lambda}$ Borel,} \]
which is easily seen to be an invariant probability measure such that $\nu'(\{ \emptyset \}) = 0$, hence it can be considered an invariant probability measure on $\phull{\Lambda}$. Using the above estimate, we can bound the reciprocal of the covolume of $\Lambda$ with respect to $\nu'$ as follows:
\[ \frac{1}{\covol_{\nu'}(\Lambda)} = \int_{\phull{\Lambda}} \frac{|P' \cap K|}{m(K)} \dif{\nu'(P')} = \frac{1}{1-c} \int_{\phull{\Lambda}} \frac{|P' \cap K|}{m(K)} \dif{\nu(P')} \geq D^{++}(\Lambda) / \ell . \]
From this it follows that $1/\covol_-(\Lambda) \geq D^{++}(\Lambda) / \ell$.

Finally, if $\phull{\Lambda}$ admits no $G$-invariant probability measures then $\covol_{+}(\Lambda) = 0$, so the inequality $D^{--}(\Lambda) \leq 1/\covol_+(\Lambda)$ trivially holds. Since we have already shown that $\phull{\Lambda}$ admits a $G$-invariant probability measure when $D^{++}(\Lambda) > 0$, we conclude that $D^{++}(\Lambda) = 0$. This gives the other inequality $\rel(\Lambda)^{-1} \cdot D^{++}(\Lambda) \leq 1/\covol_-(\Lambda) \leq D^{++}(\Lambda)$ which finishes the proof.
\end{proof}

The inequality $D^{++}(\Lambda) \leq \rel(\Lambda)/\covol_-(\Lambda)$ from \Cref{thm:density_covolume} cannot be improved in general. Indeed, the already mentioned point set $\Lambda = \Z \cup \{ k+1/k : k \in \Z \setminus \{ 0 \} \}$ in $\R$ has $1/\covol_-(\Lambda) = 1$ but $D^{++}(\Lambda) = 2$.

\begin{proof}[Proof of \Cref{thm:intro-compare-density-covolume-separeted}]
Follows from a combination of \Cref{prop:beurling_chull} and \Cref{thm:density_covolume}.
\end{proof}

\section{Sampling and interpolation in reproducing kernel Hilbert spaces}\label{sec:sampling_interpolation}

In this section we will prove \Cref{thm:twisted_necessary_conditions} which is an extension of \Cref{thm:necessary_conditions} to twisted translations. The key new concept of the proof is the abstract notion of frame and Riesz vectors for groupoid representations, which we introduce \Cref{subsec:frame_vectors}. We then apply this notion in \Cref{subsec:necessary-density} to prove \Cref{thm:twisted_necessary_conditions}.

\subsection{Groupoids and their projective unitary representations}\label{subsec:groupoids}

In this subsection we briefly introduce the terminology needed for groupoids and their unitary projective representations, see e.g.\ \cite{Re80} for a comprehensive reference.

A \emph{groupoid} is a set $\mathcal{G}$ together with a distinguished subset $\mathcal{G}^{(2)} \subseteq \G \times \G$, a multiplication map $(\alpha,\beta) \mapsto \alpha \beta$ from $\G^{(2)}$ to $\G$ and an inversion map $\alpha \mapsto \alpha^{-1}$ from $\mathcal{G}$ to $\mathcal{G}$ such that the following axioms are satisfied:
\begin{enumerate}
    \item $(\alpha^{-1})^{-1} = \alpha$ for all $\alpha \in \G$;
    \item if $(\alpha,\beta),(\beta,\gamma) \in \G^{(2)}$, then $(\alpha\beta,\gamma), (\alpha,\beta\gamma) \in \G^{(2)}$, and $(\alpha\beta)\gamma = \alpha(\beta\gamma)$; and
    \item $(\alpha,\alpha^{-1}) \in \G^{(2)}$ for all $\alpha \in \G$ and for all $(\alpha,\beta) \in \G^{(2)}$ we have that $\alpha^{-1}(\alpha\beta) = \beta$ and $(\alpha\beta)\beta^{-1} = \alpha$.
\end{enumerate}
The set $\G^{(0)} = \{ \alpha^{-1}\alpha : \alpha \in \G \} = \{ \alpha\alpha^{-1} : \alpha \in \G \}$ is called the \emph{unit space} of the groupoid and the maps $s,r \colon \G \to \G^{(0)}$ given by $s(\alpha) = \alpha^{-1}\alpha$ and $r(\alpha) = \alpha\alpha^{-1}$ are called the \emph{source} and \emph{range} map, respectively. Given $\omega \in \G^{(0)}$, we write $\G_\omega = \{ \alpha \in \G : s(\alpha) = \omega \}$ and $\G^\omega = \{ \alpha \in \G : r(\alpha) = \omega \}$ for the source and the range fibre at $\omega$, respectively.

A \emph{measurable groupoid} is a groupoid $\mathcal{G}$ which is also a measurable space such that $\G^{(0)}$ is a measurable set and the maps $s$, $r$, $(\alpha,\beta) \mapsto \alpha \beta$ and $\alpha \mapsto \alpha^{-1}$ are measurable (here we equip $\G \times \G$ with the $\sigma$-algebra generated by products of measurable sets in $\G$). If $\G$ is standard Borel as a measurable space, then $\G$ is called a \emph{Borel groupoid}, and if the fiber $\G_\omega$ (equivalently $\G^{\omega}$) is countable for each $\omega \in \G^{(0)}$, then $\G$ is called \emph{fibrewise countable}.

Let $\G$ be a measurable groupoid with unit space $\Omega = \G^{(0)}$. A measurable map $\sigma \colon \G^{(2)} \to \T$ is called a \emph{2-cocycle} on $\G$ if
\begin{enumerate}
    \item $\sigma(\alpha,\beta)\sigma(\alpha\beta,\lambda) = \sigma(\alpha,\beta\lambda)\sigma(\beta,\lambda)$ for all $\alpha,\beta,\lambda \in \G$ with $s(\alpha) = r(\beta)$ and $s(\beta) = r(\lambda)$;
    \item $\sigma(\alpha,s(\alpha)) = \sigma(r(\alpha),\alpha) = 1$ for all $\alpha \in \G$.
\end{enumerate}

A \emph{$\sigma$-projective (weakly measurable) unitary representation $\pi$} of $\G$ on a measurable field $(\Hi_{\omega})_{\omega \in \Omega}$ of Hilbert spaces over $\Omega$ (cf.\ e.g.\ \cite[p.\ 269]{Ta02}) is given by unitary maps $\pi(\alpha) \colon \Hi_{s(\alpha)} \to \Hi_{r(\alpha)}$ for every $\alpha \in \G$ such that the following properties are satisfied:
\begin{enumerate}
    \item For every pair of measurable sections $\xi$ and $\eta$ of $(\Hi_{\omega})_{\omega \in \Omega}$, the map $\G \to \C$ given by $\alpha \mapsto \langle \pi({\alpha}) \xi_{s(\alpha)} , \eta_{r(\alpha)} \rangle$ is measurable; and
    \item $\pi({\alpha}) \pi({\beta}) = \sigma(\alpha,\beta) \pi({\alpha \beta})$ whenever $s(\alpha) = r(\beta)$.
\end{enumerate}

Given two $\sigma$-projective unitary representations $\pi$ and $\rho$ of $\G$ on measurable fields $(\Hi_\omega)_{\omega \in \Omega}$ and $(\mathcal{K}_{\omega})_{\omega \in \Omega}$ respectively, a measurable field $(T_{\omega})_{\omega \in \Omega}$ of bounded linear operators from $(\mathcal{H}_\omega)_\omega$ to $(\mathcal{K}_\omega)_\omega$ is said to \emph{intertwine} $\pi$ and $\rho$ if
\[  T_{r(\alpha)} \pi(\alpha) = \rho(\alpha) T_{s(\alpha)} \;\;\; \text{for all $\alpha \in \G$.} \]
If $T$ has the same domain and codomain and $T_{r(\alpha)} \pi(\alpha) = \pi(\alpha) T_{s(\alpha)}$ for all $\alpha \in \G$, we simply say that $T$ intertwines $\pi$.

If $\G$ is a fibrewise countable Borel groupoid, there is a canonical $\sigma$-projective unitary representation associated to $\G$, namely its \emph{$\sigma$-projective left regular representation} $\Ell_\sigma$, defined as follows: Consider the field of Hilbert spaces $(\ell^2(\G^{\omega}))_{\omega \in \Omega}$. Note that an element ${\xi} = (\xi_\omega)_{\omega} \in \prod_{\omega \in \Omega} \ell^2(\G^{\omega})$ defines a function $\G \to \C$ by $\alpha \mapsto \xi_{r(\alpha)}(\alpha)$. We declare the measurable sections of the field $(\ell^2(\G^{\omega}))_{\omega \in \Omega}$ to be those ${\xi}$ for which the associated function on $\G$ is measurable. This gives $(\ell^2(\G^{\omega}))_{\omega \in \Omega}$ the structure of a measurable field of Hilbert spaces over $\Omega$. We now define the $\sigma$-projective left regular representation $\Ell_\sigma$ of $\G$ on this field by defining $\Ell_\sigma(\alpha) \colon \ell^2(\G^{s(\alpha)}) \to \ell^2(\G^{r(\alpha)})$ via
\begin{equation}
    \Ell_\sigma(\alpha) f(\beta) = \sigma(\alpha,\alpha^{-1}\beta) f(\alpha^{-1}\beta) \;\;\; \text{for $f \in \ell^2(\G^{s(\alpha)})$ and $\beta \in \G^{r(\alpha)}$.}
\end{equation}

\subsection{Frame and Riesz vectors}\label{subsec:frame_vectors}

For a detailed account on frame theory, see \cite{Ch03}. Throughout this subsection we fix a fibrewise countable Borel groupoid $\G$ with unit space $\mathcal{G}^{(0)} = \Omega$, a measurable 2-cocycle $\sigma$ and a $\sigma$-projective unitary representation $\pi$ of $\G$ on a measurable field $(\Hi_{\omega})_{\omega \in \Omega}$ of Hilbert spaces.

Given a measurable section $\eta = (\eta_\omega)_\omega$ of $(\Hi_{\omega})_{\omega}$, we can for each $\omega \in \Omega$ consider the $\G^{\omega}$-indexed family
\begin{equation}
    \pi(\G^\omega)\eta \coloneqq (\pi(\alpha) \eta_{s(\alpha)} )_{\alpha \in \G^{\omega}}
\end{equation}
of vectors in $\Hi_\omega$. In this context we make the following definition:

\begin{definition} \hfill
\begin{enumerate}
    \item We say that $\eta$ is a \emph{frame vector for $\pi$} if there exist $0 < A \leq B < \infty$ such that
\begin{equation}
     A \| \xi \|^2 \leq \sum_{\alpha \in \G^{\omega}} | \langle \xi, \pi(\alpha) \eta_{s(\alpha)} \rangle |^2 \leq B \| \xi \|^2 \;\;\; \text{for all $\omega \in \Omega$ and all $\xi \in \Hi_\omega$.} \label{eq:frame_vector}
\end{equation}
In other words, each family $\pi(\G^\omega) \eta$ is a frame for $\Hi_\omega$, with frame bounds $0 < A \leq B < \infty$ independent of $\omega$.
    \item We say that $\eta$ is a \emph{Riesz vector for $\pi$} if there exist $0 < A \leq B < \infty$ such that
    \begin{equation}
        A \| c \|_2^2 \leq \Big\| \sum_{\alpha \in \G^{\omega}} c_{\alpha} \pi(\alpha) \eta_{s(\alpha)} \Big\|^2 \leq B \| c \|_2^2 \;\;\; \text{for all $\omega \in \Omega$ and all $c = (c_{\alpha})_{\alpha \in \G^{\omega}} \in \ell^2(\G^{\omega})$.} \label{eq:riesz_vector}
    \end{equation}
In other words, each family $\pi(\G^\omega) \eta$ is a Riesz sequence for $\Hi_{\omega}$, with Riesz bounds $0 < A \leq B < \infty$ independent of $\omega$.
\end{enumerate}
\end{definition}

If only the upper frame bounds exist in the definition of a frame vector, we call it a \emph{Bessel vector} for $\pi$. We make analogous definitions for Parseval frame vectors and orthonormal vectors.

\begin{proposition}\label{prop:groupoid_bessel_intertwiner}
Suppose $\eta$ is a Bessel vector for $\pi$ and denote by $C_{\omega} \colon \Hi_\omega \to \ell^2(\G^\omega)$ the analysis operator of $\pi(\G^{\omega})\eta$ for each $\omega \in \Omega$, that is,
\[ C_\omega \xi = (\langle \xi, \pi(\alpha) \eta \rangle )_{\alpha \in \G^\omega} , \quad \xi \in \Hi_\omega . \]
Then $C = (C_{\omega})_{\omega \in \Omega}$ is a bounded measurable field of bounded linear operators from $(\Hi_\omega)_{\omega \in \Omega}$ to $(\ell^2(\G^\omega))_{\omega \in \Omega}$ that intertwines $\pi$ and $\Ell_\sigma$.
\end{proposition}

\begin{proof}
That $C$ defines a measurable field follows from the definition of a weakly measurable groupoid representation. Furthermore $C$ is bounded because of the uniform Bessel bound of the families $\pi(\G^{\omega}) \eta$, $\omega \in \Omega$. To prove the intertwining relation, note that for $\alpha,\beta \in \G$ with $r(\alpha) = r(\beta)$ we have
\[ \sigma(\alpha,\alpha^{-1}) = \sigma(\alpha, \alpha^{-1}) \sigma(\alpha\alpha^{-1},\beta) = \sigma(\alpha,\alpha^{-1}\beta)\sigma(\alpha^{-1},\beta) .\]
Hence, letting $\alpha \in \G$, $\xi \in \Hi_{s(\alpha)}$ and $\beta \in \G^{r(\alpha)}$, we get
\begin{align*}
    C_{r(\alpha)} \pi(\alpha) \xi(\beta) &= \langle \pi(\alpha) \xi, \pi(\beta) \eta_{s(\beta)} \rangle_{r(\alpha)} \\
    &= \langle \xi, \pi(\alpha)^*\pi(\beta) \eta_{s(\beta)} \rangle_{r(\alpha)} \\
    &= \langle \xi, \overline{ \sigma(\alpha,\alpha^{-1})} \sigma(\alpha^{-1},\beta) \pi(\alpha^{-1}\beta) \eta_{s(\beta)} \rangle_{r(\alpha)} \\
    &= \sigma(\alpha,\alpha^{-1}\beta) \langle \xi, \pi(\alpha^{-1}\beta) \eta_{s(\beta)} \rangle_{r(\alpha)} \\
    &= \Ell_\sigma(\alpha) C_{s(\alpha)} \xi(\beta) .
\end{align*}
\end{proof}

\begin{corollary}\label{cor:frame_iff_parseval}
The following statements hold:
\begin{enumerate}
    \item \label{it:frame_iff_parseval-frame} If $\pi$ admits a frame vector, then $\pi$ admits a Parseval frame vector.
    \item \label{it:frame_iff_parseval-riesz} If $\pi$ admits a Riesz vector, then $\pi$ admits an orthonormal vector.
\end{enumerate}
\end{corollary}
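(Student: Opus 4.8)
The plan is to reduce both claims to the classical fiberwise normalization procedures of frame theory, the essential new ingredient being that the relevant operator fields commute with $\pi$ respectively $\Ell_\sigma$. Throughout, fix a frame (resp.\ Riesz) vector $\eta$, let $C_\omega$ be the analysis operator of $\pi(\G^\omega)\eta$, and write $S_\omega = C_\omega^* C_\omega$ for the frame operator on $\Hi_\omega$ and $G_\omega = C_\omega C_\omega^*$ for the Gram operator on $\ell^2(\G^\omega)$. By the uniform frame (resp.\ Riesz) bounds, $A\,\mathrm{Id} \le S_\omega \le B\,\mathrm{Id}$ (resp.\ $A\,\mathrm{Id}\le G_\omega\le B\,\mathrm{Id}$) for all $\omega$, so both fields are uniformly bounded and uniformly invertible.

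For the frame case I would first show that $S = (S_\omega)_\omega$ intertwines $\pi$. By \Cref{prop:groupoid_bessel_intertwiner} the field $C$ intertwines $\pi$ and $\Ell_\sigma$, i.e.\ $C_{r(\alpha)}\pi(\alpha) = \Ell_\sigma(\alpha) C_{s(\alpha)}$; taking adjoints and using that $\pi(\alpha)$ and $\Ell_\sigma(\alpha)$ are unitary gives $\pi(\alpha)C_{s(\alpha)}^* = C_{r(\alpha)}^*\Ell_\sigma(\alpha)$, and combining the two identities yields $S_{r(\alpha)}\pi(\alpha) = \pi(\alpha)S_{s(\alpha)}$. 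Since each $S_\omega$ has spectrum in the fixed compact interval $[A,B]$, the function $t\mapsto t^{-1/2}$ is a uniform limit of polynomials there, so $S^{-1/2} = (S_\omega^{-1/2})_\omega$ is again a bounded measurable field (measurable fields being closed under composition, adjoints and uniform operator-norm limits) that still intertwines $\pi$. Setting $\tilde\eta_\omega = S_\omega^{-1/2}\eta_\omega$ produces a measurable section with $\pi(\alpha)\tilde\eta_{s(\alpha)} = S_{r(\alpha)}^{-1/2}\pi(\alpha)\eta_{s(\alpha)}$, so that for each $\omega$ the family $\pi(\G^\omega)\tilde\eta$ equals $S_\omega^{-1/2}\,\pi(\G^\omega)\eta$, which is precisely the canonical Parseval frame associated to the frame $\pi(\G^\omega)\eta$; the resulting bounds $A=B=1$ are automatically uniform in $\omega$. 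This proves (i).

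For the Riesz case the correct object is the Gram field $G=(G_\omega)_\omega$, which by the same adjoint manipulation intertwines the regular representation, $G_{r(\alpha)}\Ell_\sigma(\alpha) = \Ell_\sigma(\alpha)G_{s(\alpha)}$, so $G^{-1/2}$ is again a bounded measurable field intertwining $\Ell_\sigma$. The subtle point is to realize the orthonormalized families as $\pi(\G^\omega)\tilde\eta$ for a single section $\tilde\eta$. I would define $\tilde\eta_\omega = C_\omega^* G_\omega^{-1/2}\delta_\omega$, where $\delta_\omega\in\ell^2(\G^\omega)$ is the indicator of the unit $\omega\in\G^\omega$ (the section $\omega\mapsto\delta_\omega$ is measurable because its associated function on $\G$ is the indicator of $\G^{(0)}$). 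Using $\pi(\alpha)C_{s(\alpha)}^* = C_{r(\alpha)}^*\Ell_\sigma(\alpha)$, the intertwining of $G^{-1/2}$, and the identity $\Ell_\sigma(\alpha)\delta_{s(\alpha)} = \sigma(\alpha,s(\alpha))\delta_\alpha = \delta_\alpha$ coming from the cocycle normalization, one computes $\pi(\alpha)\tilde\eta_{s(\alpha)} = C_{r(\alpha)}^* G_{r(\alpha)}^{-1/2}\delta_\alpha$ for every $\alpha\in\G^\omega$. Hence $\pi(\G^\omega)\tilde\eta$ is the image of the orthonormal basis $(\delta_\alpha)_{\alpha\in\G^\omega}$ under $C_\omega^* G_\omega^{-1/2}$, which is an isometry since $(C_\omega^* G_\omega^{-1/2})^*(C_\omega^* G_\omega^{-1/2}) = G_\omega^{-1/2}C_\omega C_\omega^* G_\omega^{-1/2} = \mathrm{Id}$; therefore $\pi(\G^\omega)\tilde\eta$ is orthonormal, proving (ii).

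The routine part is the verification that $S^{-1/2}$ and $G^{-1/2}$ are bounded measurable fields, which follows from closure of measurable fields under adjoints, composition and uniform limits together with Weierstrass approximation on $[A,B]$. The main obstacle is conceptual rather than computational: identifying in the Riesz case the single generating section $\tilde\eta$ whose orbit reproduces the fiberwise orthonormalization. The decisive facts that make this possible are that $G^{-1/2}$ intertwines $\Ell_\sigma$ and that $\Ell_\sigma(\alpha)$ carries the unit vector $\delta_{s(\alpha)}$ to $\delta_\alpha$, which is exactly what converts the abstract isometry $C_\omega^* G_\omega^{-1/2}$ into translation by $\pi(\alpha)$ of one fixed vector.
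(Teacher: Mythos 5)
Your argument for part \ref{it:frame_iff_parseval-frame} is correct and essentially identical to the paper's: both derive $S_{r(\alpha)}\pi(\alpha)=\pi(\alpha)S_{s(\alpha)}$ from \Cref{prop:groupoid_bessel_intertwiner}, pass to $S^{-1/2}$ by uniform polynomial approximation of $t\mapsto t^{-1/2}$ on $[A,B]$, and take $S^{-1/2}\eta$ as the Parseval frame vector.

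For part \ref{it:frame_iff_parseval-riesz} you take a genuinely different route. The paper restricts the representation to the field of closed spans $\mathcal{K}_\omega=\clspn\pi(\G^\omega)\eta$, observes that a Riesz vector becomes a frame vector for $\pi|_{\mathcal{K}}$, and reuses the normalization from part \ref{it:frame_iff_parseval-frame} there. You instead work entirely on the coefficient side: the Gram field $G_\omega=C_\omega C_\omega^*$ satisfies $A\,\mathrm{Id}\leq G_\omega\leq B\,\mathrm{Id}$ by the uniform Riesz bounds and intertwines $\Ell_\sigma$ by the adjoint of the relation in \Cref{prop:groupoid_bessel_intertwiner}, so $G^{-1/2}$ does too, and the section $\tilde\eta_\omega=C_\omega^*G_\omega^{-1/2}\delta_\omega$ satisfies $\pi(\alpha)\tilde\eta_{s(\alpha)}=C_{r(\alpha)}^*G_{r(\alpha)}^{-1/2}\delta_\alpha$; your verification of $\Ell_\sigma(\alpha)\delta_{s(\alpha)}=\sigma(\alpha,s(\alpha))\delta_\alpha=\delta_\alpha$ and of the isometry property of $C_\omega^*G_\omega^{-1/2}$ is correct, so $\pi(\G^\omega)\tilde\eta$ is the image of the orthonormal basis $(\delta_\alpha)_{\alpha\in\G^\omega}$ under an isometry, hence orthonormal. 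Your version has the advantage of staying inside the field $(\ell^2(\G^\omega))_\omega$, whose measurable structure is already fixed in the paper, and of giving an explicit formula for the orthonormal vector; the paper's version is shorter but leaves implicit the verification that $(\mathcal{K}_\omega)_\omega$ is a measurable field and that $\pi|_{\mathcal{K}}$ is a measurable representation. Both arguments rely on the same routine closure properties of measurable operator fields under adjoints, composition and pointwise norm limits, which you invoke at the same level of detail as the paper does.
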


\begin{proof}
\ref*{it:frame_iff_parseval-frame}: Let $\eta$ be a frame vector for $\pi$ and denote by $S_\omega$ the frame operator of $\pi(\G^\omega)\eta$ for $\omega \in \G^{(0)}$. Then $S_\omega = C_\omega^* C_\omega$ where $C_\omega$ is the analysis operator of $\pi(\G^\omega)\eta$, hence from \Cref{prop:groupoid_bessel_intertwiner} we get that $S = (S_\omega)_\omega$ is a bounded measurable field of bounded linear operators on $(\Hi_\omega)_\omega$ that intertwines $\pi$. By the continuous functional calculus it follows that the field $S^{-1/2} = (S_\omega^{-1/2})_\omega$ intertwines $\pi$. Hence the canonical Parseval frame associated to each $\pi(\G^\omega)\eta$ is of the form
\[ S_\omega^{-1/2} \pi(\G^\omega) \eta = \pi(\G^\omega) S^{-1/2} \eta . \]
This shows that $S^{-1/2}\eta$ is a Parseval frame vector for $\pi$.

\ref*{it:frame_iff_parseval-riesz}: Let $\eta$ be a Riesz vector associated to $\pi$. For each $\omega \in \Omega$ set $\mathcal{K}_{\omega} = \clspn \pi(\G^\omega)\eta$. We then get a representation $\pi|_{\mathcal{K}}$ by restricting each $\pi(\alpha)$ to $\mathcal{K}_{s(\alpha)}$, and $\eta$ becomes both a Riesz vector and a frame vector for $\pi|_{\mathcal{K}}$, so we can form the associated frame operators $S_\omega$ on $\mathcal{K}_{\omega}$. Arguing similarly as in \ref*{it:frame_iff_parseval-frame} it now follows that the field $S^{-1/2} = (S_\omega^{-1/2})_\omega$ intertwines $\pi|_{\mathcal{K}}$, hence $S^{-1/2}\eta$ is an orthonormal vector for $\pi|_{\mathcal{K}}$, hence also for $\pi$.
\end{proof}

\subsection{Sampling and interpolation in reproducing kernel Hilbert spaces}

Throughout this subsection we fix a reproducing kernel Hilbert space (RKHS) $\Hi$ on $G$. That is, $\Hi$ is a Hilbert space of functions on $G$ such that the point evaluations $\Hi \to \C$, $f \mapsto f(x)$ for $x \in G$ are continuous in the norm on $\Hi$. It follows that for each $x \in G$ there exists a vector $k_x \in \Hi$ such that $f(x) = \langle f, k_x \rangle$ for $f \in \Hi$. Furthermore, the \emph{kernel} of $\Hi$ is the function $k \colon G \times G \to \C$ given by
\[ k(x,y) = \langle k_y, k_x \rangle , \quad x,y \in G .\]

We also fix a \emph{(continuous) 2-cocycle} on $G$, that is, a continuous function $\sigma \colon G \times G \to \T$ satisfying the following properties:
\begin{enumerate}
    \item $\sigma(x,y)\sigma(xy,z) = \sigma(x,yz)\sigma(y,z)$ for all $x,y,z \in G$; and
    \item $\sigma(e,e) = 1$.
\end{enumerate}

\begin{assumption}\label{ass:rkhs}
We will make the following assumptions on the RKHS $\Hi$:
\begin{enumerate}[label=(\alph*)]
    \item $\Hi$ is isometrically embedded into $L^2(G)$.
    \item $\sigma$-twisted $G$-invariance: Whenever $f \in \Hi$ and $x \in G$ then $\lreg{\sigma}{x}f \in \Hi$, where
    \[ \lreg{\sigma}{x}f(y) = \sigma(x,x^{-1}y)f(x^{-1}y), \qquad f \in L^2(G) . \]
    \item Localization: For some compact identity neighborhood $Q$ in $G$ we have that
    \begin{equation}
        \int_G \sup_{ t \in xQ} |f(t)|^2 \dif{x} < \infty , \quad f \in \Hi . \label{eq:wiener}
    \end{equation}
\end{enumerate}
\end{assumption}

The twisted $G$-invariance of $\Hi$ significantly simplifies the structure of the kernel of $\Hi$. It implies that $k_x = \lreg{\sigma}{x} k_e$ for all $x \in G$, as shown by the computation
\[ \langle f, k_x \rangle = f(x) = (\lreg{\sigma}{x}^*f)(e) = \langle f, \lreg{\sigma}{x}k_e \rangle , \quad f \in \Hi . \]
Consequently $\| k_x \| = \| k_e \|$ for all $x \in G$. Moreover, the kernel $k$ is determined by the vector $k_e$, since
\[ k(x,y) = \langle k_y, k_x \rangle = \langle k_e, \lreg{\sigma}{y}^*\lreg{\sigma}{x} k_e \rangle = \sigma(y,y^{-1} ) \overline{ \sigma(y^{-1},x) }k_e(y^{-1}x) , \quad x,y \in G. \]
In particular $k(x,x) = k_e(e) = \langle k_e, k_e \rangle = \| k_e \|^2$.

A consequence of the isometric embedding of $\Hi$ into $L^2(G)$ and $\sigma$-twisted $G$-invariance is that every element of $\Hi$ is a continuous function on $G$. This follows from the strong continuity of $\sigma$-twisted left translation on $G$ together with the estimate
\[ |f(x) - f(y)| = |\langle f, k_x - k_y \rangle| \leq \| f \| \| k_x - k_y \| = \| f \| \| \lreg{\sigma}{x} k_e - \lreg{\sigma}{y} k_e \| , \quad f \in \Hi . \]

The equation \eqref{eq:wiener} describes exactly elementhood of a Wiener amalgam space which usually goes by the name of $W(L^2,L^\infty)(G)$ in the literature, cf.\ \cite{Fe83}. We will denote it by $W^2(G)$. The main property of $W^2(G)$ we need is the following lemma, which can be deduced from \cite[Lemma 1]{Gr08} or proved using similar techniques as in \cite[Theorem 11.1.4]{Gr01}.

\begin{lemma}\label{cor:wiener_summation}
Let $f \in W^2(G)$ be continuous. Then for every $\epsilon > 0$ and every $\delta > 0$, there exists a compact set $K \subseteq G$ such that for every relatively separated set $\Lambda \subseteq G$ with $\rel(\Lambda) \leq \delta$ we have
\[ \Big( \sum_{\lambda \in \Lambda \setminus K} |f(\lambda)|^2 \Big)^{1/2} < \epsilon .\]
\end{lemma}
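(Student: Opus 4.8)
The plan is to dominate the discrete tail sum by a tail integral of the integrable function $(f^{\natural})^2$ and then invoke absolute continuity of the integral. Since $f \in W^2(G)$ means $f^{\natural} \in L^2(G)$, we have $(f^{\natural})^2 \in L^1(G)$, and because $G$ is $\sigma$-compact its integral over the complement of a sufficiently large compact set is arbitrarily small. The bridge from the values $|f(\lambda)|$ to $f^{\natural}$ is the following sampling estimate: if $x \in \lambda Q^{-1}$, then $\lambda \in xQ$, so $|f(\lambda)| \le f^{\natural}(x)$; integrating the square over $x \in \lambda Q^{-1}$ and using unimodularity, $m(\lambda Q^{-1}) = m(Q^{-1}) = m(Q)$, yields the key pointwise bound
\[ |f(\lambda)|^2 \le \frac{1}{m(Q)} \int_{\lambda Q^{-1}} f^{\natural}(x)^2 \dif{x} . \]

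Summing over $\lambda \in \Lambda \setminus K$ and interchanging sum and integral (Tonelli, all terms nonnegative) gives
\[ \sum_{\lambda \in \Lambda \setminus K} |f(\lambda)|^2 \le \frac{1}{m(Q)} \int_G N(x)\, f^{\natural}(x)^2 \dif{x}, \qquad N(x) \coloneqq \sum_{\lambda \in \Lambda \setminus K} \mathbbm{1}_{\lambda Q^{-1}}(x) . \]
Here $N(x) = |(\Lambda \setminus K) \cap xQ|$, and two features of this overlap function drive the argument. First, $N$ is bounded: $N(x) \le \sup_{x \in G} |\Lambda \cap xQ| \eqqcolon C$, which is finite by relative separatedness, with $C$ estimated through \Cref{cor:rel_sep_cpt}. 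Second, $N$ is supported far out: $N(x) \ne 0$ forces $xQ$ to meet $\Lambda \setminus K$, so $x \in (G \setminus K)Q^{-1}$, and choosing $K = K_0 Q$ for a compact set $K_0$ forces $\supp N \subseteq G \setminus K_0$.

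Combining these two features,
\[ \sum_{\lambda \in \Lambda \setminus K} |f(\lambda)|^2 \le \frac{C}{m(Q)} \int_{G \setminus K_0} f^{\natural}(x)^2 \dif{x} , \]
so given $\epsilon$ and $\delta$ I would fix $C$, then choose $K_0$ compact with $\int_{G \setminus K_0} f^{\natural}(x)^2 \dif{x} < m(Q)\epsilon^2 / C$, and finally set $K = K_0 Q$. The step I expect to be the main obstacle is making the overlap bound $C$ \emph{uniform} over all admissible $\Lambda$: \Cref{cor:rel_sep_cpt} controls $\sup_{x} |\Lambda \cap xQ|$ by $\rel(\Lambda)\cdot m(QU)/m(U)$, where $U$ is a separation neighbourhood for $\Lambda$, so the relevant quantity is the overlap at the \emph{fixed} amalgam scale $Q$ rather than at the a priori unrelated scale realizing $\rel(\Lambda)$. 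It is this fixed-scale overlap that the hypothesis $\rel(\Lambda) \le \delta$ must be read as quantifying; once a bound $C = C(\delta, Q)$ uniform in $\Lambda$ is in hand, the remaining steps are the routine absolute-continuity estimate above.
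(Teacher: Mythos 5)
Your covering-function argument is the standard proof of this kind of amalgam-space summation estimate and is exactly the route behind the references the paper cites in lieu of a proof (\cite[Lemma 1]{Gr08}, \cite[Theorem 11.1.4]{Gr01}): the pointwise bound $|f(\lambda)|^2 \leq m(Q)^{-1}\int_{\lambda Q^{-1}} f^{\natural}(x)^2 \dif{x}$, Tonelli, the support observation $\supp N \subseteq G \setminus K_0$ for $K = K_0Q$, and absolute continuity of $\int (f^{\natural})^2$ are all correct. The one step you flag as an obstacle, however, is a genuine gap and cannot be closed under the hypothesis as literally written: the quantity $\rel(\Lambda) = \inf_U \sup_x |\Lambda \cap xU|$ does \emph{not} control the fixed-scale overlap $\sup_x|\Lambda \cap xQ|$. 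Concretely, in $G = \R$ with $Q = [-1,1]$, the sets $\Lambda_n = t + \{k/n : 0 \leq k < n\}$ satisfy $\rel(\Lambda_n) = 1$ for every $n$ (take $U$ of diameter less than $1/n$), yet $|\Lambda_n \cap tQ| = n$. Taking $f(x) = (1+|x|)^{-1}$, which is continuous and lies in $W^2(\R)$, and pushing $t$ past any prescribed compact $K$ while letting $n \to \infty$ makes $\sum_{\lambda \in \Lambda_n \setminus K}|f(\lambda)|^2 \geq n(1+t+1)^{-2}$ arbitrarily large. So no choice of $K$ depending only on $\epsilon$ and $\delta = \rel(\Lambda)$ can work, and the lemma is in fact false with the hypothesis read literally.

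The resolution is that the separation scale must be fixed in advance: the hypothesis should be that $\Lambda$ is $\ell$-relatively $U$-separated for a \emph{fixed} identity neighbourhood $U$ with $\ell \leq \delta$, the compact set $K$ then being allowed to depend on $U$ as well as on $\epsilon$ and $\delta$. This is precisely how the lemma is invoked in the paper, e.g.\ in the proof of \Cref{thm:frame_limits}, where the estimate is applied uniformly to all $P'$ with $\rel_U(P') \leq \ell$ for the single neighbourhood $U$ attached to $\Lambda$ (and all elements of the hull satisfy this by \Cref{prop:U_discrete_closed}). Under that reading your argument closes exactly as you anticipate: \Cref{cor:rel_sep_cpt} applied to the compact set $xQ$ gives $N(x) = |(\Lambda \setminus K) \cap xQ| \leq \ell \cdot m(QU)/m(U) \eqqcolon C(\delta, U, Q)$ uniformly in $x$ and in all admissible $\Lambda$, and the rest of your estimate is unchanged. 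You should state explicitly that you are strengthening (equivalently, correcting) the hypothesis in this way rather than deriving the uniform bound from $\rel(\Lambda) \leq \delta$, since the latter is impossible.
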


\begin{definition}
We say that $\Lambda \subseteq G$ is 
\begin{enumerate}
    \item a set of \emph{sampling} for $\Hi$ if there exist $0 < A \leq B < \infty$ such that
\[ A\| f \|^2 \leq \sum_{\lambda \in \Lambda}|f(\lambda)|^2 \leq B \| f \|^2, \quad f \in \Hi .\]
    \item It is a set of \emph{interpolation} for $\Hi$ if for every $(c_\lambda)_{\lambda \in \Lambda} \in \ell^2(\Lambda)$ there exists $f \in \Hi$ such that $f(\lambda) = c_\lambda$ for all $\lambda$. 
\end{enumerate}
\end{definition}

Note that $\Lambda$ is a set of sampling for $\Hi$ by definition if and only if $(k_\lambda)_{\lambda \in \Lambda}$ is a frame for $\Hi$. Analogously \Cref{prop:riesz_limits} shows that $\Lambda$ is a set of interpolation for $\Lambda$ if and only if $(k_\lambda)_{\lambda \in \Lambda}$ is a Riesz sequence for $\Hi$.

\begin{proposition}\label{prop:bessel_relsep}
$( k_\lambda)_{\lambda \in \Lambda}$ is a Bessel sequence if and only if $\Lambda$ is relatively separated. In particular, any set of sampling or interpolation for $\Hi$ is relatively separated.
\end{proposition}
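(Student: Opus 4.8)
The plan is to use the reproducing property $f(\lambda) = \langle f, k_\lambda\rangle$, so that the Bessel condition for $(k_\lambda)_{\lambda \in \Lambda}$ reads $\sum_{\lambda \in \Lambda}|f(\lambda)|^2 \le B\|f\|^2$ for all $f \in \Hi$, and to prove the two implications separately. Throughout I assume $\Hi \neq \{0\}$, equivalently $k_e \neq 0$ (otherwise every $\Lambda$ is vacuously Bessel and the statement fails); I also freely use $k_x = \lreg{\sigma}{x}k_e$ and the strong continuity of $\lregw{\sigma}$, both established above.

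First I would show that relative separatedness implies the Bessel property, using the localization assumption (c) that $\Hi \subseteq W^2(G)$. Fix the compact neighborhood $Q$ defining the local maximum function $f^\natural$. For $x \in \lambda Q^{-1}$ one has $\lambda \in xQ$ and hence $f^\natural(x) \ge |f(\lambda)|$; integrating over $x \in \lambda Q^{-1}$ and using unimodularity gives $|f(\lambda)|^2 \le m(Q^{-1})^{-1}\int_{\lambda Q^{-1}}(f^\natural)^2$. Summing over $\lambda$ and interchanging sum and integral yields
\[ \sum_{\lambda \in \Lambda} |f(\lambda)|^2 \le \frac{1}{m(Q^{-1})}\int_G f^\natural(x)^2\, |\Lambda \cap xQ|\, \dif{x}, \]
since $\sum_{\lambda}\mathbbm{1}_{\lambda Q^{-1}}(x) = |\Lambda \cap xQ|$. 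Because $\Lambda$ is relatively separated, covering $Q$ by finitely many translates of a fixed open identity neighborhood shows $\sup_x|\Lambda\cap xQ| < \infty$, so the integral is bounded by a constant times $\|f^\natural\|_2^2 = \|f\|_{W^2(G)}^2$. It then remains to bound $\|f\|_{W^2(G)}$ by $\|f\|_{\Hi}$, i.e.\ that the inclusion $\Hi \hookrightarrow W^2(G)$ is bounded. This I would obtain from the closed graph theorem: both spaces are Banach, convergence in $\Hi$ forces pointwise convergence (RKHS), convergence in $W^2(G)$ forces $L^2$- and hence a.e.\ pointwise convergence, and since all functions involved are continuous the two limits of a common sequence must agree; thus the graph is closed and a uniform Bessel bound results.

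For the converse I would argue contrapositively and test the Bessel inequality on the kernels themselves. Applying it to $f = k_x$ gives $\sum_{\lambda}|\langle k_x, k_\lambda\rangle|^2 \le B\|k_x\|^2 = B\|k_e\|^2$ for every $x \in G$. Writing $\lambda = xv$ and using $k_y = \lreg{\sigma}{y}k_e$ together with the relation $\lreg{\sigma}{x}\lreg{\sigma}{v} = \sigma(x,v)\lreg{\sigma}{xv}$ and unitarity of $\lreg{\sigma}{x}$, one computes $\langle k_x, k_\lambda\rangle = \sigma(x,v)\langle k_e, \lreg{\sigma}{v}k_e\rangle$. Since $|\sigma|\equiv 1$, the modulus $|\langle k_x,k_\lambda\rangle| = |\langle k_e, \lreg{\sigma}{v}k_e\rangle|$ depends only on $v = x^{-1}\lambda$ and \emph{not} on $x$. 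By strong continuity of $\lregw{\sigma}$ this tends to $\|k_e\|^2$ as $v \to e$, so there is a symmetric precompact identity neighborhood $V$ with $|\langle k_e, \lreg{\sigma}{v}k_e\rangle| \ge \|k_e\|^2/2$ for all $v \in V$. Hence every $\lambda \in \Lambda \cap xV$ contributes at least $\|k_e\|^4/4$, giving $|\Lambda\cap xV|\cdot \|k_e\|^4/4 \le B\|k_e\|^2$, i.e.\ $\sup_x|\Lambda\cap xV| \le 4B/\|k_e\|^2 < \infty$. Thus $\Lambda$ is relatively separated.

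The main obstacle is the forward direction's boundedness of the embedding $\Hi \hookrightarrow W^2(G)$: assumption (c) only provides the set-theoretic inclusion, and upgrading it to a uniform norm estimate via the closed graph theorem is what delivers a single Bessel bound $B$ valid for all $f$. The converse is by contrast a short computation whose only delicate point—uniformity in $x$ of the near-diagonal lower bound—is resolved automatically by $|\sigma|\equiv 1$. The final assertion that any set of sampling or interpolation for $\Hi$ is relatively separated then follows at once, since both a frame and a Riesz sequence are in particular Bessel sequences.
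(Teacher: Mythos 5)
Your proof of the equivalence itself is correct, and both directions differ in detail from the paper's argument. For the converse, the paper argues contrapositively: it picks an arbitrary nonzero continuous $f \in \Hi$, a compact $K$ on which $|f| \geq c$, and a translate $y$ with $|\Lambda \cap yK|$ large, then tests the Bessel inequality on $\lreg{\sigma}{y}f$. Your version tests directly on the kernels $k_x$ and exploits the identity $|\langle k_x, k_{xv}\rangle| = |\langle k_e, \lreg{\sigma}{v}k_e\rangle|$ to get a near-diagonal lower bound; this is cleaner and yields the explicit quantitative estimate $\sup_x |\Lambda \cap xV| \leq 4B/\|k_e\|^2$, which the paper's argument does not. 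For the forward direction, the paper applies the closed graph theorem directly to the sampling operator $\Hi \to \ell^2(\Lambda)$, $f \mapsto (f(\lambda))_\lambda$, after using its Wiener amalgam summation lemma (\Cref{cor:wiener_summation}) only to see that the sum is finite for each fixed $f$. You instead prove the explicit amalgam bound $\sum_\lambda |f(\lambda)|^2 \leq m(Q^{-1})^{-1}\sup_x|\Lambda \cap xQ| \cdot \|f\|_{W^2(G)}^2$ and then apply the closed graph theorem to the embedding $\Hi \hookrightarrow W^2(G)$. Both routes are valid and of comparable length; yours is more self-contained (it does not need the uniform tail estimate of \Cref{cor:wiener_summation}) and makes the dependence of the Bessel bound on $\rel_Q(\Lambda)$ visible. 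Your observation that the statement requires $\Hi \neq \{0\}$ is correct and worth keeping.

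There is, however, a genuine gap in your final sentence for the interpolation case. A set of interpolation is defined by solvability of the moment problem $f(\lambda) = c_\lambda$ for all $c \in \ell^2(\Lambda)$; by the standard duality (Young) this is equivalent only to a \emph{lower} Riesz-type bound $A\|c\|_2^2 \leq \|\sum_\lambda c_\lambda k_\lambda\|^2$, not to $(k_\lambda)_{\lambda \in \Lambda}$ being a Riesz sequence. So you cannot invoke ``a Riesz sequence is in particular a Bessel sequence'': the upper bound is exactly what is not yet known, and deducing it from relative separatedness via the forward implication of this proposition would be circular. The correct route (the one the paper takes in \Cref{prop:riesz_limits}) is to derive separatedness directly from the lower bound: if $\Lambda$ were not separated, strong continuity of $x \mapsto \lreg{\sigma}{x}k_e$ would produce $\lambda \neq \lambda'$ in $\Lambda$ with $\|k_\lambda - k_{\lambda'}\|^2 < A$, while the choice $c = \delta_\lambda - \delta_{\lambda'}$ forces $\|k_\lambda - k_{\lambda'}\|^2 \geq 2A$, a contradiction. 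Separated then trivially implies relatively separated. With this replacement the ``in particular'' clause is fully justified; the sampling half is fine as stated, since the upper frame inequality is literally the Bessel condition.
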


\begin{proof}
The proof of the forward implication can be easily adapted from \cite[Proposition 3.1]{ChDeHe99}. Conversely, suppose that $\Lambda$ is relatively separated. Let $f \in \Hi$. By \Cref{ass:rkhs} (c) $f \in W^2(G)$, so \Cref{cor:wiener_summation} implies in particular that $\sum_{\lambda \in \Lambda} |f(\lambda)|^2 < \infty$. By the closed graph theorem, the linear operator $\Hi \to \ell^2(\Lambda)$, $f \mapsto (f(\lambda))_{\lambda \in \Lambda}$ is bounded, hence $(k_\lambda)_{\lambda \in \Lambda}$ is a Bessel sequence.
\end{proof}

\begin{proposition}\label{prop:riesz_limits}
If $\Lambda$ is a set of interpolation for $\Hi$, then $\Lambda$ is separated and $(k_x)_{x \in P}$ is a Riesz sequence with uniform Riesz bounds $0 < A \leq B < \infty$ for all $P \in \phull{\Lambda}$.
\end{proposition}

\begin{proof}
Note that $\Lambda$ is a set of interpolation if and only if for every $c \in \ell^2(\Lambda)$ there exists $f \in \Hi$ such that $\langle f, k_\lambda \rangle = c_\lambda$ for $\lambda \in \Lambda$. This property is well-known to be equivalent to the existence of a lower Riesz bound $A>0$ for $(k_\lambda)_{\lambda \in \Lambda}$, i.e.,\ $A \| c \|_2^2 \leq \| \sum_\lambda c_\lambda k_\lambda \|$ for all $c \in \ell^2(\Lambda)$, see \cite[p.\ 154--155]{Yo80}. Using the strong continuity of $\pi$ one argues similarly as in \cite[Lemma 2.2 (b)]{GrOrRo15} that $\Lambda$ must be separated. But then by \Cref{prop:bessel_relsep} $(k_\lambda)_{\lambda \in \Lambda}$ must be a Bessel sequence, which is equivalent to the existence of an upper Riesz bound $B > 0$, see e.g.\ \cite[Theorem 3.2.3]{Ch03}. Thus $(k_\lambda)_{\lambda \in \Lambda}$ is a Riesz sequence with bounds $0 < A \leq B < \infty$.

The proof that the Riesz bounds are uniform for every $P \in \phull{\Lambda}$ is a straightforward adaption of Beurling's weak limit techniques, see \cite{Be89}.
\end{proof}

Many variations of the following result exist in the literature, see e.g.\ \cite{Be89,RaSt95,He08}. Since our arguments rely subtly on the Wiener amalgam space $W^2(G)$ and we only assume our point sets to be relatively separated rather than separated as in \cite{RaSt95,He08}, we give a detailed proof.

\begin{theorem}\label{thm:frame_limits}
If $(k_\lambda)_{\lambda \in \Lambda}$ is a frame for $\Hi$ with bounds $0 < A \leq B < \infty$, then $\Lambda$ is relatively separated and relatively dense, and $(k_x)_{x \in P}$ is a frame with frame bounds $0 < A/\rel(\Lambda) \leq B < \infty$ for every $P \in \chull{\Lambda}$.
\end{theorem}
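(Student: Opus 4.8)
The plan is to run the two-step scheme from the introduction in the reproducing kernel setting: transport the frame inequality to every left translate of $\Lambda$, and then pass to Chabauty--Fell limits. Relative separatedness of $\Lambda$ is immediate, since a frame is a Bessel sequence and \Cref{prop:bessel_relsep} applies; I fix once and for all a precompact symmetric identity neighbourhood $U$ with $\rel_U(\Lambda)=\rel(\Lambda)=:\ell$, which exists because $\rel(\Lambda)$ is an integer infimum and hence attained. For the translate step I would use $\sigma$-twisted $G$-invariance: from $k_x=\lreg{\sigma}{x}k_e$ and $\lreg{\sigma}{x}\lreg{\sigma}{\lambda}=\sigma(x,\lambda)\lreg{\sigma}{x\lambda}$ one obtains $|\langle f,k_{x\lambda}\rangle|=|\langle \lreg{\sigma}{x}^{*}f,k_{\lambda}\rangle|$, and since $\lreg{\sigma}{x}$ restricts to a unitary of $\Hi$ (by assumptions (a) and (b)) the family $(k_\lambda)_{\lambda\in x\Lambda}$ is again a frame with the \emph{same} bounds $A,B$. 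As every $P\in\chull{\Lambda}$ is a limit $P=\lim_n P_n$ of such translates $P_n=x_n\Lambda$ and is itself $\ell$-relatively $U$-separated by \Cref{prop:U_discrete_closed}, everything reduces to controlling these limits.

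The upper bound is the routine direction. Given $P$ and $f\in\Hi$, I would test the Bessel estimate on an arbitrary finite subset $\{x^1,\dots,x^m\}\subseteq P$: using \Cref{lem:chab_conv} pick $x_n^j\in P_n$ with $x_n^j\to x^j$ (eventually distinct since the $x^j$ are), and use the continuity of elements of $\Hi$ to pass $\sum_{j=1}^m|f(x_n^j)|^2\leq B\|f\|^2$ to the limit. Taking the supremum over finite subsets then yields $\sum_{x\in P}|f(x)|^2\leq B\|f\|^2$.

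The lower bound, and with it relative density, is where the factor $\rel(\Lambda)$ and the genuine work appear. Fix $f\in\Hi$ and $\epsilon>0$. By the localization assumption (c) and \Cref{cor:wiener_summation} with $\delta=\ell$, I would select a single compact $K$ with $\sum_{\lambda\in Q\setminus K}|f(\lambda)|^2<\epsilon$ \emph{simultaneously} for all relatively separated $Q$ with $\rel(Q)\leq\ell$, in particular for every $P_n$; enlarging $K$ to a sublevel set $\{\phi\leq t\}$ of a proper continuous function $\phi\geq 0$ with $t\notin\{\phi(x):x\in P\}$, I may also arrange $P\cap\partial K=\emptyset$. Together with the translate bound $A\|f\|^2\leq\sum_{\lambda\in P_n}|f(\lambda)|^2$ this gives $A\|f\|^2-\epsilon\leq\sum_{\lambda\in P_n\cap K}|f(\lambda)|^2$ for all $n$. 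Now I invoke \Cref{prop:rel_sep_conv}: for large $n$ the set $P_n\cap K$ partitions into clusters $M_n^1,\dots,M_n^k$ with $|M_n^j|\leq\ell$, each converging to a single point $x^j$ of $P\cap K=\{x^1,\dots,x^k\}$. Since every element of $M_n^j$ tends to $x^j$ and $f$ is continuous, $\max_{y\in M_n^j}|f(y)|^2\to|f(x^j)|^2$, whence $\limsup_n\sum_{y\in M_n^j}|f(y)|^2\leq\ell\,|f(x^j)|^2$; summing the finitely many clusters gives $\limsup_n\sum_{\lambda\in P_n\cap K}|f(\lambda)|^2\leq\ell\sum_{x\in P}|f(x)|^2$. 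Letting $\epsilon\to0$ leaves $A\|f\|^2\leq\rel(\Lambda)\sum_{x\in P}|f(x)|^2$.

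This single inequality, valid for all $P\in\chull{\Lambda}$, yields both outstanding claims at once: applied to a nonzero $f$ it forces $P\neq\emptyset$, so $\emptyset\notin\chull{\Lambda}$ and $\Lambda$ is relatively dense (cf.\ \Cref{subsec:hull}); rearranged, it is exactly the lower frame bound $\sum_{x\in P}|f(x)|^2\geq(A/\rel(\Lambda))\|f\|^2$. I expect the main obstacle to be precisely this cluster analysis: under Chabauty--Fell limits several points of a translate may collapse onto one point of $P$, and the number of colliding points---bounded by $\rel(\Lambda)$ via \Cref{prop:rel_sep_conv}---is exactly what degrades the lower bound by the factor $\rel(\Lambda)$. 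The role of hypothesis (c) is to make the truncation to $K$ uniform over all translates $P_n$, so that the compact cutoff survives the passage $n\to\infty$.
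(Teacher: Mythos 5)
Your proposal is correct and follows essentially the same route as the paper's proof: relative separatedness via \Cref{prop:bessel_relsep}, transport of the frame bounds to translates by the twisted translations, uniform truncation to a compact set via \Cref{cor:wiener_summation}, and the cluster decomposition of \Cref{prop:rel_sep_conv} producing the loss factor $\rel(\Lambda)$ in the lower bound. The only differences are organizational (you deduce relative density from the limiting lower-bound inequality rather than by a separate contradiction, and use $\limsup$ over clusters instead of passing to a subsequence with constant cluster sizes), and these are harmless.
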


\begin{proof}
Let $(k_\lambda)_{\lambda \in \Lambda}$ be a frame with bounds $0 < A \leq B < \infty$. Then it is in particular a Bessel sequence, hence $\Lambda$ must be $\ell$-relatively $U$-separated for some $\ell \in \N$ and some open neighborhood $U$ of the identity by \Cref{prop:bessel_relsep}. Let $\epsilon > 0$ and let $f \in \Hi$ be nonzero. By \Cref{ass:rkhs} (iii) $f \in W^2(G)$, so by \Cref{cor:wiener_summation} we can find a compact set $K \subseteq G$ such that
\begin{equation}
    \sum_{x \in P' \setminus K}|f(x)|^2 < \frac{\epsilon}{4} \label{eq:estimate_frame_chabauty}
\end{equation}
for all relatively separated sets $P'$ with $\rel_U(P') \leq \ell$. If $\Lambda$ was not relatively dense, then there would exist $y \in G$ such that $y^{-1}\Lambda \cap K = \emptyset$. But then by the frame property and the above
\[ A \| f \|^2 = A\| \Ell_\sigma(y)f \|^2 \leq \sum_{\lambda \in \Lambda}| \langle f, k_{y^{-1}\lambda} \rangle|^2 = \sum_{x \in y^{-1}\Lambda} |\langle f, k_x\rangle|^2 = \sum_{x \in y^{-1}\Lambda \setminus K} |\langle f, k_x \rangle |^2 < \frac{\epsilon}{4} . \]
For $\epsilon$ small enough, this leads to a contradiction, hence $\Lambda$ must be relatively dense.

Let now $P \in \chull{\Lambda}$, say $P = \lim_n P_n$ where each $P_n$ is a left translate of $\Lambda$. Then \eqref{eq:estimate_frame_chabauty} gives
\begin{equation}
    \Big| \sum_{x \in P \setminus K}|f(x)|^2 - \sum_{x \in P_n \setminus K}|f(x)|^2 \Big| < \frac{\epsilon}{2} \label{eq:weak_limit_1}
\end{equation}
for all $n \in \N$.

Enlarging $K$, the estimate \eqref{eq:weak_limit_1} still holds, so that we may without loss of generality assume that $P \cap \partial K = \emptyset$. Indeed, if $b^1, \dotsc, b^r$ is an enumeration of $P \cap \partial K$ and $L \subseteq G$ is a compact neighborhood of the identity such that $P \cap b^iL = \{b^i\}$ for all $i$, then $K' = K \cup b^1L \cup \dotsm \cup b^rL$ satisfies $P \cap \partial K' = \emptyset$ and $K' \supseteq K$.

Let $P \cap K = \{ x^1, \ldots, x^k \}$. Appealing to \Cref{prop:rel_sep_conv} we can find $N \in \N$ such that when $n \geq N$, $P_n \cap K$ is a disjoint union of sets $M_n^1, \ldots, M_n^k$ with $1 \leq |M_n^j|\leq \ell$ and $M_n^j \to \{ x^j \}$ as $n \to \infty$. Passing to a subsequence, we may assume that there is $c^j \in \N$ such that $|M_n^j| = c^j$ for all $n$.  Together with strong continuity of translation, these properties imply that
\[ \lim_{n \to \infty} \sum_{x \in M_n^j}|f(x)|^2 = c^j  |f(x_j)|^2, \quad 1 \leq j \leq k . \]
Define a function $m \colon P \to \N$ by $m(x) = |M_n^j|$ if $x = x^j$ and $m(x) = 1$ otherwise. Using the above, there exists an $N' \geq N$ such that
\begin{equation}
    \Big| \sum_{x \in P \cap K} m(x)| f(x)|^2 - \sum_{x \in P_n \cap K} | f(x)|^2 \Big| < \frac{\epsilon}{2}  \label{eq:weak_limit_2}
\end{equation}
when $n \geq N'$.

Combining \eqref{eq:weak_limit_1} and \eqref{eq:weak_limit_2}, we arrive at
\begin{align*}
    \Big| \sum_{x \in P } m(x)| f(x)|^2 - \sum_{x \in P_n } | f(x)|^2 \Big| &< \frac{\epsilon}{2} + \frac{\epsilon}{2} = \epsilon , \quad \text{ for all } n \geq N'.
\end{align*}
Using that $(k_x)_{x \in P_n}$ is a frame with bounds $0 < A \leq B < \infty$ for every $n \in \N$, we obtain
\[ A \| f \|^2 - \epsilon \leq \sum_{x \in P} m(x)| f(x)|^2 \leq B \| f \|^2 + \epsilon . \]
Since $\epsilon > 0$ was arbitrary, this gives
\[ \frac{A}{\ell} \| f \|^2 \leq \sum_{x \in P} \frac{m(x)}{\ell}|f(x)|^2 \leq \sum_{x \in P}|f(x)|^2 \leq \sum_{x \in P}m(x)|f(x)|^2 \leq B \| f \|^2 . \]
Hence $(k_x)_{x \in P}$ is a frame with bounds $0 < A/\ell \leq B < \infty$.
\end{proof}

\subsection{Necessary density conditions}\label{subsec:necessary-density}

As in the previous subsection we fix a reproducing kernel Hilbert space on $G$ satisfying \Cref{ass:rkhs}. We will now associate to a point set $\Lambda \subseteq G$ a groupoid $\mathcal{G}(\Lambda)$ and a groupoid representation $\pi_\Lambda$ to which we will apply the results from \Cref{subsec:frame_vectors} to deduce \Cref{thm:necessary_conditions}.

\begin{definition}\label{def:groupoid}
The \emph{groupoid} of $\Lambda$, denoted by $\deloid{\Lambda}$, is the restriction of the transformation groupoid $G \ltimes \phull{\Lambda}$ to the transversal $\dhull{\Lambda}$ of $\Lambda$.
\[ \deloid{\Lambda} = \{ (x,P) \in G \times \phull{\Lambda} : P, xP \in \dhull{\Lambda} \} = \{ (x,P) \in G \times \dhull{\Lambda} : x^{-1} \in P \} , \]
with source and range maps given by $s(x,P) = P$ and $r(x,P) = xP$ for $(x,P) \in \deloid{\Lambda}$, multiplication given by $(x,yP)(y,P) = (xy,P)$ and inversion given by $(x,P)^{-1} = (x^{-1},xP)$. We give this groupoid the Borel structure coming from the product topology on the cartesian product $G \times \phull{\Lambda}$.
\end{definition}

For Delone sets in $G = \R^d$, this groupoid has been studied in e.g.\ \cite{BeHeZa00, BoMe19,BoMe21}, often under the name of the ``transversal groupoid'' of $\Lambda$, although with slightly different conventions in the definition than the above definition.

The 2-cocycle $\sigma$ on $G$ gives rise to a measurable 2-cocycle $\sigma_\Lambda$ on $\deloid{\Lambda}$ in the sense of \Cref{subsec:groupoids}. Namely, for any $((x,P), (y,Q)) \in \deloid{\Lambda}^{(2)}$, set
\begin{equation}
    \sigma_{\Lambda}((x,P),(y,Q)) = \sigma(x,y).
\end{equation}
The 2-cocycle identities for $\sigma_{\Lambda}$ follow from the 2-cocycle identites for $\sigma$.

We will now construct a $\sigma_{\Lambda}$-projective groupoid representation $\lregpoint{\sigma}{\Hi}$ of $\deloid{\Lambda}$. As our measurable field of Hilbert spaces $(\Hi_Q)_{Q \in \dhull{\Lambda}}$ over the unit space $\dhull{\Lambda}$ of $\deloid{\Lambda}$, we simply take $\Hi_Q = \Hi$ for every $Q \in \dhull{\Lambda}$. Hence $\prod_{Q \in \dhull{\Lambda}} \Hi_Q$ can be identified with the set of functions $\dhull{\Lambda} \to \Hi$, and we declare the measurable sections to be those functions $f \colon \dhull{\Lambda} \to \Hi$ for which $Q \mapsto \langle f(Q), g \rangle$ is measurable for every $g \in \Hi$.

We define $\lregpoint{\sigma}{\Hi}$ as follows: Given $(x,P) \in \deloid{\Lambda}$, we define $\lregpoint{\sigma}{\Hi}(x,P) \colon \Hi_P \to \Hi_{xP}$ to be 
\[ \lregpoint{\sigma}{\Hi}(x,P) = \lreg{\sigma}{x} , \]
that is, $\lregpoint{\sigma}{\Hi}(x,P)$ is just the $\sigma$-twisted translation operator $\lreg{\sigma}(x)$ on $\Hi = \Hi_P = \Hi_{xP}$. The fact that this defines a $\sigma_{\Lambda}$-projective unitary representation of $\deloid{\Lambda}$ on $(\Hi_Q)_{Q \in \dhull{\Lambda}}$ follows from the fact that the $\sigma$-twisted left regular representation of $G$ is $\sigma$-projective.

The following proposition characterizes frame and Riesz vectors for $\lregpoint{\sigma}{\Hi}$ in the sense of \Cref{subsec:frame_vectors}.

\begin{proposition}\label{prop:frame_vector_char}
Let $\Lambda$ be a relatively separated subset of $G$, let $\mu$ be a non-zero finite invariant measure on $\phull{\Lambda}$ and let $g = (g_Q)_{Q \in \dhull{\Lambda}}$ be a measurable section of $(\Hi_Q)_Q$. Then the following statements are equivalent:
\begin{enumerate}
    \item $g$ is a frame vector for $\lregpoint{\sigma}{\Hi}$.
    \item The families $(\lreg{\sigma}{x}g_{x^{-1}Q})_{x \in Q}$ are frames for each $Q \in \dhull{\Lambda}$, with frame bounds uniform in $Q$.
    \item The families $(\lreg{\sigma}{x}g_{x^{-1}P})_{x \in P}$ are frames for each $P \in \phull{\Lambda}$, with frame bounds uniform in $P$.
\end{enumerate}
Analogous equivalences hold with frame vector replaced by Riesz vector, Parseval frame vector or orthonormal vector.
\end{proposition}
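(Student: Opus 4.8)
The plan is to reduce almost everything to unwinding definitions, after which the only substantive content is promoting the condition over the transversal $\dhull{\Lambda}$ in (ii) to the condition over the whole punctured hull $\phull{\Lambda}$ in (iii). First I would compute the range fibre of $\deloid{\Lambda}$ over a unit $Q \in \dhull{\Lambda}$: an element $(x,P)$ has $r(x,P) = xP = Q$ iff $P = x^{-1}Q$, and membership $P \in \dhull{\Lambda}$ forces $e \in x^{-1}Q$, i.e.\ $x \in Q$. Thus $\G^Q = \{(x,x^{-1}Q) : x \in Q\}$, and since $\lregpoint{\sigma}{\Hi}(x,x^{-1}Q) g_{x^{-1}Q} = \lreg{\sigma}{x} g_{x^{-1}Q}$, the family $\lregpoint{\sigma}{\Hi}(\G^Q)g$ is precisely $(\lreg{\sigma}{x} g_{x^{-1}Q})_{x \in Q}$. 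Comparing directly with \eqref{eq:frame_vector} shows that ``$g$ is a frame vector'' says verbatim that these families are frames with bounds uniform in $Q$; this gives the equivalence of (i) and (ii), and the same identification applied to \eqref{eq:riesz_vector} handles the Riesz, Parseval and orthonormal variants.

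Because $\dhull{\Lambda} \subseteq \phull{\Lambda}$, statement (iii) trivially restricts to (ii), so the real work is (ii) $\Rightarrow$ (iii). Here I would fix $P \in \phull{\Lambda}$, choose any $z \in P$ (possible since $P$ is nonempty), and set $Q = z^{-1}P$, which lies in $\dhull{\Lambda}$ as $e \in z^{-1}P$. The substitution $x = zy$ with $y \in Q$ is a bijection $P \to Q$, and since $x^{-1}P = y^{-1}Q$ (so that $g_{x^{-1}P} = g_{y^{-1}Q}$ is well-defined, as $y \in Q$ gives $y^{-1}Q \in \dhull{\Lambda}$), it reparametrizes the family for $P$ as $(\lreg{\sigma}{zy} g_{y^{-1}Q})_{y \in Q}$. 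The cocycle relation $\lreg{\sigma}{z}\lreg{\sigma}{y} = \sigma(z,y)\lreg{\sigma}{zy}$ then rewrites each vector as $\overline{\sigma(z,y)}\,\lreg{\sigma}{z}\big(\lreg{\sigma}{y} g_{y^{-1}Q}\big)$, using that $\sigma$ is $\T$-valued.

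The crux is that $\lreg{\sigma}{z}$ restricts to a unitary of $\Hi$: it maps $\Hi$ into $\Hi$ by the $\sigma$-twisted $G$-invariance of \Cref{ass:rkhs}(b), and it is norm-preserving because the embedding $\Hi \hookrightarrow L^2(G)$ is isometric by \Cref{ass:rkhs}(a) and $\lreg{\sigma}{z}$ is unitary on $L^2(G)$. Since multiplication by the unimodular scalars $\overline{\sigma(z,y)}$ leaves each modulus unchanged and $\xi \mapsto \lreg{\sigma}{z}^*\xi$ is a norm-preserving bijection of $\Hi$, the frame inequalities for the family over $P$ are equivalent, with the same constants, to those for the family over $Q \in \dhull{\Lambda}$; I would spell this out as $\sum_{y \in Q} |\langle \xi, \overline{\sigma(z,y)}\lreg{\sigma}{z}(\lreg{\sigma}{y} g_{y^{-1}Q})\rangle|^2 = \sum_{y \in Q} |\langle \lreg{\sigma}{z}^*\xi, \lreg{\sigma}{y} g_{y^{-1}Q}\rangle|^2$. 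As the uniform bounds in (ii) do not depend on $Q$, they carry over to every $P$, giving (iii). The Riesz case is identical after pulling the unitary $\lreg{\sigma}{z}$ out of the synthesis sum and noting $\|(c_y\overline{\sigma(z,y)})_y\|_2 = \|c\|_2$; the Parseval and orthonormal statements are the special case $A=B=1$.

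The only step that is not pure bookkeeping is this last invariance of the frame and Riesz bounds under the unimodular twist and the unitary $\lreg{\sigma}{z}$, which is exactly where assumptions (a) and (b) on $\Hi$ enter. It is worth noting that no localization assumption (c) is needed for this proposition, since it is a purely unitary statement about the fibred families rather than a summability statement.
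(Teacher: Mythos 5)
Your proposal is correct and follows essentially the same route as the paper: identify the range fibre $\deloid{\Lambda}^Q = \{(x,x^{-1}Q) : x \in Q\}$ to see that (i) and (ii) coincide by definition, note that (iii) $\Rightarrow$ (ii) is trivial, and transport the frame/Riesz bounds from $Q = z^{-1}P \in \dhull{\Lambda}$ to $P$ via the unitary $\lreg{\sigma}{z}$. Your version is marginally more explicit than the paper's in tracking the unimodular cocycle phases $\overline{\sigma(z,y)}$ and in justifying that $\lreg{\sigma}{z}$ restricts to a unitary of $\Hi$, but these are presentational refinements of the same argument.
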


\begin{proof}
By definition, $g$ is a frame vector for $\lregpoint{\sigma}{\Hi}$ if each of the families
\[ \lregpoint{\sigma}{\Hi}(\deloid{\Lambda}^{Q})g = (\lregpoint{\sigma}{\Hi}(x,x^{-1}Q) g_{x^{-1}Q})_{x \in Q} = (\lreg{\sigma}{x}g_{x^{-1}Q})_{x \in Q} \]
is a frame for $\Hip$ with frame bounds uniform in $Q$. This shows that $(i)$ and $(ii)$ are equivalent by definition.

To show that $(ii)$ implies $(iii)$, assume that $g$ is a frame vector for $\lregpoint{\sigma}{\Hi}$. Let $P \in \phull{\Lambda}$ and pick some $y \in P$. Then $y^{-1}P \in \dhull{\Lambda}$ so that
\[ (\lreg{\sigma}{x}g_{x^{-1}y^{-1}P})_{x \in y^{-1}P} = \lreg{\sigma}{y}^* (\lreg{\sigma}{x}g_{x^{-1}P})_{x \in P} \]
is a frame. Since $(\lreg{\sigma}{x}g_{x^{-1}P})_{x \in P}$ is an image of this frame under the unitary operator $\lreg{\sigma}{y}^*$, it follows that the latter is also a frame with the same frame bounds. Finally, the implication from $(iii)$ to $(ii)$ is trivial since $\dhull{\Lambda} \subseteq \phull{\Lambda}$.
\end{proof}

\begin{theorem}\label{thm:frame_bounds_measure}
Let $\Lambda \subseteq G$ be a relatively separated set and let $\mu$ be a nonzero finite invariant measure on $\phull{\Lambda}$. If $g = (g_Q)_{Q \in \dhull{\Lambda}}$ is a frame vector for $\lregpoint{\sigma}{\Hi}$ with frame bounds $0 < A \leq B < \infty$, then
\[ A \leq \frac{ \mu_0(\dhull{\Lambda})^{-1} \int_{\dhull{\Lambda}} \| g_Q \|^2 \dif{\mu_0(Q)} }{\| k_e \|^2 \covol_\mu(\Lambda) }  \leq B .\]
If $g$ is merely a Bessel vector for $\lregpoint{\sigma}{\Hi}$ with Bessel bound $B$, then the upper bound above holds.
\end{theorem}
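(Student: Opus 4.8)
The plan is to integrate the uniform frame inequalities supplied by $g$ over the punctured hull against $\mu$, evaluated on the single distinguished test vector $\xi = k_e$. By \Cref{prop:frame_vector_char}, the hypothesis that $g$ is a frame vector for $\lregpoint{\sigma}{\Hi}$ is equivalent to the assertion that for every $P \in \phull{\Lambda}$ the family $(\lreg{\sigma}{x} g_{x^{-1}P})_{x \in P}$ is a frame for $\Hi$ with bounds $A,B$ that do not depend on $P$. First I would specialize this to $\xi = k_e$, obtaining for every $P$
\[ A \| k_e \|^2 \leq \sum_{x \in P} |\langle k_e, \lreg{\sigma}{x} g_{x^{-1}P} \rangle|^2 \leq B \| k_e \|^2, \]
and then integrate all three terms against $\mu$ over $\phull{\Lambda}$; since $\|k_e\|^2$ is independent of $P$, the outer terms produce $A \| k_e \|^2 \mu(\phull{\Lambda})$ and $B \| k_e \|^2 \mu(\phull{\Lambda})$.

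The heart of the argument is to identify the middle term. Applying the transverse measure formula \eqref{eq:transverse} to $F(x,Q) = |\langle k_e, \lreg{\sigma}{x} g_Q \rangle|^2$ converts the integral of the $P$-sum into
\[ \int_{\phull{\Lambda}} \sum_{x \in P} |\langle k_e, \lreg{\sigma}{x} g_{x^{-1}P} \rangle|^2 \dif{\mu(P)} = \int_{\dhull{\Lambda}} \int_G |\langle k_e, \lreg{\sigma}{x} g_Q \rangle|^2 \dif{x} \dif{\mu_0(Q)}. \]
The key computation is the inner integral. Using the reproducing property $\langle \lreg{\sigma}{x} g_Q, k_e \rangle = (\lreg{\sigma}{x} g_Q)(e) = \sigma(x,x^{-1}) g_Q(x^{-1})$, one finds $|\langle k_e, \lreg{\sigma}{x} g_Q \rangle|^2 = |g_Q(x^{-1})|^2$ because $|\sigma| = 1$; then unimodularity (the substitution $x \mapsto x^{-1}$) together with the isometric embedding $\Hi \hookrightarrow L^2(G)$ gives
\[ \int_G |\langle k_e, \lreg{\sigma}{x} g_Q \rangle|^2 \dif{x} = \int_G |g_Q(x^{-1})|^2 \dif{x} = \| g_Q \|^2, \]
so the middle term equals $\int_{\dhull{\Lambda}} \| g_Q \|^2 \dif{\mu_0(Q)}$.

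Assembling these pieces yields
\[ A \| k_e \|^2 \mu(\phull{\Lambda}) \leq \int_{\dhull{\Lambda}} \| g_Q \|^2 \dif{\mu_0(Q)} \leq B \| k_e \|^2 \mu(\phull{\Lambda}), \]
and I would finish by dividing through by $\| k_e \|^2 \mu(\phull{\Lambda})$ and rewriting $\mu(\phull{\Lambda}) = \covol_\mu(\Lambda)\, \mu_0(\dhull{\Lambda})$ via \Cref{def:full_covolume}, which produces exactly the stated double inequality. The Bessel case uses only the right-hand frame inequality, so the identical chain delivers the upper bound alone.

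The step I expect to require the most care is the application of \eqref{eq:transverse} to $F(x,Q) = |\langle k_e, \lreg{\sigma}{x} g_Q\rangle|^2$, which is neither continuous nor compactly supported: the formula is stated only for $F \in C_c(G \times \dhull{\Lambda})$, so I would first extend it to nonnegative Borel functions by monotone convergence, exactly as is already implicitly used in the proof of \Cref{prop:covolume_average}. As all integrands are nonnegative, the identity and the interchange of integrals hold in $[0,\infty]$ by Tonelli's theorem, and finiteness of $\int_{\dhull{\Lambda}} \| g_Q \|^2 \dif{\mu_0(Q)}$ then follows a posteriori from the upper estimate. I would also record that the matrix coefficient, the measurability of the relevant sections, and the pointwise translation formula computed in $\Hi$ agree with their $L^2(G)$ counterparts, which is immediate from assumptions (a) and (b) of \Cref{ass:rkhs}.
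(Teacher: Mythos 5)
Your proposal is correct and follows essentially the same route as the paper's proof: specialize the uniform frame inequality to $\xi = k_e$, integrate over $\phull{\Lambda}$, convert the middle term via the transverse measure formula, identify the inner integral as $\|g_Q\|^2$ using the reproducing property, unimodularity and the isometric embedding into $L^2(G)$, and divide through. Your extra care in extending \eqref{eq:transverse} from $C_c(G \times \dhull{\Lambda})$ to nonnegative Borel integrands via monotone convergence is a point the paper leaves implicit, and is a welcome addition.
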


\begin{proof}
Let $g$ be a frame vector for $\lregpoint{\sigma}{\Hi}$ with frame bounds $0 < A \leq B < \infty$. By \Cref{prop:frame_vector_char}, the following double inequality holds for all $P \in \phull{\Lambda}$:
\[ A \| k_e \|^2 \leq \sum_{x \in P}|\langle k_e, \lreg{\sigma}{x} g_{x^{-1}P} \rangle |^2 \leq B \| k_e \|^2 .\]
Integrating over $\phull{\Lambda}$ with respect to $\mu$, we obtain
\[ A \mu(\phull{\Lambda}) \| k_e \|^2 \leq \int_{\phull{\Lambda}} \sum_{x \in P} | \langle k_e, \lreg{\sigma}{x} g_{x^{-1}P} \rangle|^2 \dif{\mu(P)} \leq B \mu(\phull{\Lambda}) \| k_e \|^2 .\]
We consider the middle term of the above double inequality. Using the transverse measure formula \eqref{eq:transverse} and the unimodularity of $G$, we obtain
\begin{align*}
    \int_{\phull{\Lambda}} \sum_{x \in P}|\langle k_e, \lreg{\sigma}{x} g_{x^{-1}P} \rangle |^2 \dif{\mu(P)} &= \int_{\dhull{\Lambda}} \int_G | \langle k_e, \lreg{\sigma}{x} g_Q \rangle |^2 \dif{\mu_0(Q)} \\
    &= \int_{\dhull{\Lambda}} \int_G |\langle k_{x^{-1}}, g_Q \rangle|^2 \dif{x} \dif{\mu_0(Q)} \\
    &= \int_{\dhull{\Lambda}} \int_G | g_Q(x^{-1})|^2 \dif{x} \dif{\mu_0(Q)} \\
    &= \int_{\dhull{\Lambda}} \| g_Q \|^2 \dif{\mu_0(Q)} .
\end{align*}
Dividing throughout the double inequality by $\| k_e \|^2 \mu_0(\dhull{\Lambda})$, the conclusion follows.
\end{proof}

Everything is now in place to prove the following theorem, which contains the main result \Cref{thm:necessary_conditions} as the special case $\sigma = 1$.

\begin{theorem}\label{thm:twisted_necessary_conditions}
Let $\Lambda$ be a relatively separated subset of a second-countable, locally compact group $G$ and let $\Hi$ be a reproducing kernel Hilbert space on $G$ satisfying \Cref{ass:rkhs}. Then the following hold:
\begin{enumerate}
    \item If $\Lambda$ is a set of sampling for $\Hi$, then $\| k_e \|^2 \covol_+(\Lambda) \leq 1$.
    \item If $\Lambda$ is a set of interpolation for $\Hi$, then $\| k_e \|^2 \covol_-(\Lambda) \geq 1$.
\end{enumerate}
\end{theorem}

\begin{proof}
First, if there exist no non-zero finite invariant measures on $\phull{\Lambda}$ then $\covol_-(\Lambda) = \infty$ while $\covol_+(\Lambda) = 0$. Since $\| k_e \|^2$ is positive, the the statements of the theorem hold vacuously. We may thus assume that there is a non-zero finite invariant measure $\mu$ on $\phull{\Lambda}$.

$(i)$: Suppose that $\Lambda$ is a set of sampling for $\Hi$. By \Cref{thm:frame_limits} $\Lambda$ is relatively separated and relatively dense, and $(k_x)_{x \in P}$ is a frame for every $P \in \chull{\Lambda}$ with frame bounds uniform in $P$. By \Cref{prop:frame_vector_char} the constant section $(k_e)_{P \in \dhull{\Lambda}}$ is a frame vector for $\lregpoint{\sigma}{\Hi}$. By \Cref{cor:frame_iff_parseval} there exists a Parseval frame vector $g = (g_Q)_{Q \in \dhull{\Lambda}}$ for $\lregpoint{\sigma}{\Hi}$. It then follows from \Cref{thm:frame_bounds_measure} that $\mu_0(\dhull{\Lambda})^{-1} \int_{\dhull{\Lambda}} \| g_Q \|^2 \dif{\mu_0(Q)} = \| k_e \|^2 \covol_\mu(\Lambda)$. Since each $Q \in \dhull{\Lambda}$ contains $e$, the vector $g_Q = \lreg{\sigma}{e} g_{e^{-1}Q}$ is an element of the Parseval frame $(\lreg{\sigma}{x}g_{x^{-1}Q})_{x \in Q}$. Since vectors in Parseval frames have norm at most $1$, it follows that
\[ \| k_e \|^2 \covol_\mu(\Lambda) = \frac{1}{\mu_0(\dhull{\Lambda})} \int_{\dhull{\Lambda}} \| g_Q \|^2 \dif{\mu_0(Q)} \leq 1 . \]
As $\mu$ was arbitrary, we conclude that $\| k_e \|^2 \covol_+(\Lambda) \leq 1$.

$(ii)$: Suppose that $\Lambda$ is a set of interpolation for $\Hi$. By \Cref{prop:riesz_limits} $\Lambda$ is separated and $(k_x)_{x \in P}$ is a Riesz sequence for every $P \in \phull{\Lambda}$ with Riesz bounds uniform in $P$. By \Cref{prop:frame_vector_char} the constant section $(k_e)_{P \in \dhull{\Lambda}}$ is a Riesz vector for $\lregpoint{\sigma}{\Hi}$. By \Cref{cor:frame_iff_parseval} there exists an orthonormal vector $g$ for $\lregpoint{\sigma}{\Hi}$, in particular a Bessel vector for $\lregpoint{\sigma}{\Hi}$ with Bessel bound $B=1$. It then follows from \Cref{thm:frame_bounds_measure} that $\mu_0(\dhull{\Lambda})^{-1} \int_{\dhull{\Lambda}} \| g_Q \|^2 \dif{\mu_0(Q)} \leq \| k_e \|^2 \covol_\mu(\Lambda)$. Similarly as in $(ii)$, each vector $g_Q$ is an element of an orthonormal sequence, hence $\| k_e \|^2\covol_\mu(\Lambda) \geq 1$. As $\mu$ was arbitrary, we conclude that $\| k_e \|^2 \covol_-(\Lambda) \geq 1$.
\end{proof}

\begin{remark}\label{rmk:interpolating}
\Cref{ass:rkhs} (c) is essential to prove both parts (i) and (ii) of \Cref{thm:twisted_necessary_conditions}. However, the following variation of (ii) is valid without \Cref{ass:rkhs} (c):
\begin{enumerate}[label=(\roman*')]
    \addtocounter{enumi}{1} \item  If $(k_\lambda)_{\lambda \in \Lambda}$ is a Riesz sequence for $\Hi$, then $\| k_e \|^2 \covol_-(\Lambda) \geq 1$.
\end{enumerate}
Indeed, the proof of \Cref{thm:twisted_necessary_conditions} (ii) hinges on \Cref{prop:riesz_limits} which uses \Cref{ass:rkhs} (c) only at one point, namely to show that $\Lambda$ being interpolating implies that $(k_\lambda)_{\lambda \in \Lambda}$ is a Riesz sequence. Hence if $(k_\lambda)_{\lambda \in \Lambda}$ is already assumed to be a Riesz sequence like above, then \Cref{ass:rkhs} (c) is not needed. Also, if $G$ is assumed to be an IN group, then \Cref{ass:rkhs} (c) is not needed for \Cref{thm:twisted_necessary_conditions} (i). In this case the proof given in \cite[Lemma 5.19]{He08} which does not assume any localization can replace \Cref{thm:frame_limits}.
\end{remark}

\begin{corollary}\label{cor:necessary_conditions_amenable}
Let $\Lambda$ be a subset of an amenable, unimodular lcsc group $G$ and denote by $D^{-}(\Lambda)$ and $D^{+}(\Lambda)$ respectively the lower and upper Beurling density of $\Lambda$ with respect to a strong Følner sequence. Let $\Hi$ be a reproducing kernel Hilbert space on $G$ as in \Cref{thm:necessary_conditions}. Then the following hold:
\begin{enumerate}
    \item If $\Lambda$ is a set of sampling for $\Hi$, then $D^-(\Lambda) \geq \| k_e \|^2$.
    \item If $\Lambda$ is a set of interpolation for $\Hi$, then $D^+(\Lambda) \leq \| k_e \|^2$.
\end{enumerate}
\end{corollary}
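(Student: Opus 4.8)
The plan is to derive this corollary by combining Theorem~\ref{thm:necessary_conditions} with the comparison result Theorem~\ref{thm:density_covolume} (the intro version being Theorem~\ref{thm:intro-compare-density-covolume-separeted}), which translates between covolumes and Beurling densities. The logical structure is essentially a substitution: Theorem~\ref{thm:necessary_conditions} gives bounds in terms of $\covol_\pm(\Lambda)$, and the density comparison theorem converts these into bounds on $D^\pm(\Lambda)$. The key conceptual point is that since $G$ is assumed amenable and unimodular, strong Følner sequences exist, so the Beurling densities $D^-(\Lambda)$ and $D^+(\Lambda)$ are well-defined, and the comparison theorem applies.

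First I would treat the sampling case. Theorem~\ref{thm:necessary_conditions}(i) yields $\|k_e\|^2 \covol_+(\Lambda) \leq 1$, equivalently $\|k_e\|^2 \leq 1/\covol_+(\Lambda)$. By the comparison theorem (the Delone identity $D^-(\Lambda) = 1/\covol_+(\Lambda)$ from Theorem~\ref{thm:intro-compare-density-covolume-separeted}), the right-hand side equals $D^-(\Lambda)$, giving $D^-(\Lambda) \geq \|k_e\|^2$. Dually, for the interpolation case, Theorem~\ref{thm:necessary_conditions}(ii) gives $\|k_e\|^2 \covol_-(\Lambda) \geq 1$, i.e.\ $\|k_e\|^2 \geq 1/\covol_-(\Lambda)$, and the identity $D^+(\Lambda) = 1/\covol_-(\Lambda)$ turns this into $D^+(\Lambda) \leq \|k_e\|^2$.

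The one subtlety I would need to address is the hypothesis class: Theorem~\ref{thm:intro-compare-density-covolume-separeted} states the clean reciprocal identities only for \emph{Delone} sets (separated and relatively dense), whereas the corollary as stated allows an arbitrary subset $\Lambda$. I would argue that the sampling and interpolation assumptions themselves force the relevant regularity. A set of sampling with finite upper frame bound must be relatively separated (this is the standard argument: a set with $\rel(\Lambda) = \infty$ cannot satisfy an upper sampling inequality against localized reproducing kernels), and combined with the localization assumption~(c) one extracts relative density from the lower sampling bound; symmetrically, an interpolation set must be separated. Thus on the relevant sets the full reciprocal identities of the comparison theorem are available, and the substitution above goes through cleanly.

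I expect the only genuine obstacle to be this matching of regularity hypotheses—verifying that ``set of sampling'' and ``set of interpolation'' for a reproducing kernel Hilbert space satisfying (a)--(c) automatically lie in the Delone regime where Theorem~\ref{thm:intro-compare-density-covolume-separeted} gives equalities rather than merely the one-sided inequalities stated there for general relatively separated sets. If one does not wish to establish the Delone property, the weaker inequalities in Theorem~\ref{thm:intro-compare-density-covolume-separeted} (e.g.\ $1/\covol_-(\Lambda) \leq D^+(\Lambda)$ and $D^-(\Lambda) \leq 1/\covol_+(\Lambda)$, up to the factor $\rel(\Lambda)^{-1}$) still suffice to deduce the stated conclusions, since only one direction of each reciprocal comparison is needed for each part of the corollary; this fallback makes the proof robust even without the sharp Delone identities.
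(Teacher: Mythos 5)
Your main line of argument is the paper's own: combine \Cref{thm:necessary_conditions} with the covolume--density comparison, after first establishing the regularity that the comparison requires. You correctly identify both regularity facts --- a set of sampling is relatively separated and relatively dense (this is \Cref{thm:frame_limits}, which indeed uses the localization assumption), and a set of interpolation is separated (\Cref{prop:riesz_limits}) --- and with these in hand the substitution goes through exactly as in the paper. One small imprecision: a set of sampling need not be separated, so the full Delone identity $D^-(\Lambda) = 1/\covol_+(\Lambda)$ is not available for it; what you actually need (and what suffices) is the one-sided inequality $1/\covol_+(\Lambda) \leq D^-(\Lambda)$, which \Cref{thm:intro-compare-density-covolume-separeted} provides under the weaker hypothesis ``relatively separated and relatively dense''.

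The closing ``fallback'' paragraph, however, is wrong and should be deleted: the regularity step is not optional. For the sampling part the inequality you need is $1/\covol_+(\Lambda) \leq D^-(\Lambda)$, and this direction is asserted only under the hypothesis of relative density; for a merely relatively separated set it can fail badly (the paper's own example of a non-uniform lattice has $D^-(\Lambda) = 0$ while $\covol_+(\Lambda)$ equals the finite lattice covolume). For the interpolation part the inequality you need is $D^+(\Lambda) \leq 1/\covol_-(\Lambda)$, and for general relatively separated sets the comparison theorem gives only $\rel(\Lambda)^{-1} \cdot D^+(\Lambda) \leq 1/\covol_-(\Lambda)$; without separatedness this yields $D^+(\Lambda) \leq \rel(\Lambda) \| k_e \|^2$, not the stated bound. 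In both cases it is precisely the direction of the comparison that carries the extra hypothesis (relative density, respectively $\rel(\Lambda) = 1$) that is required, so the Delone-type regularity must be established, as your main argument correctly does.
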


\begin{proof}[Proof of \Cref{cor:necessary_conditions_amenable}]
$(i)$ If $\Lambda$ is a is a set of sampling for $\Hi$, then $\Lambda$ is relatively separated and relatively dense by \Cref{thm:frame_limits} and $\| k_e \|^2 \covol_+(\Lambda) \leq 1$ by \Cref{thm:necessary_conditions}. Since $D^{--}(\Lambda) = 1/\covol_+(\Lambda)$ by \Cref{thm:density_covolume} we conclude that $D^-(\Lambda) \geq D^{--}(\Lambda) \geq d_\pi$.

$(ii)$ If $\Lambda$ is a set of interpolation for $\Hi$ then $\Lambda$ is separated by \Cref{prop:riesz_limits} and the inequality $\| k_e \|^2 \covol_-(\Lambda) \geq 1$ holds by \Cref{thm:necessary_conditions}. Since $D^{++}(\Lambda) = 1/\covol_-(\Lambda)$ by \Cref{thm:density_covolume} and $D^{+}(\Lambda) = D^{++}(\Lambda)$ by \Cref{prop:beurling_chull} we conclude that $D^{+}(\Lambda) \leq \| k_e \|^2$.
\end{proof}

\section{Examples}\label{sec:examples}

\subsection{Coherent systems arising from projective discrete series representations}\label{subsec:coherent}

Let $G$ be a unimodular lcsc group with 2-cocycle $\sigma$. A \emph{$\sigma$-projective unitary representation} of $G$ on a Hilbert space $\Hip$ is a map $\pi \colon G \to \mathcal{U}(\Hip)$, where $\mathcal{U}(\Hip)$ denotes the unitary operators on $\Hip$, such that
\[ \pi(x)\pi(y) = \sigma(x,y)\pi(xy) \;\;\; \text{for all $x,y \in G$,} \]
and such that $\pi$ is \emph{strongly continuous}, that is, $x \mapsto \pi(x)\xi$ is continuous for every $\xi \in \Hip$.

A $\sigma$-projective, irreducible, unitary representation $\pi$ of $G$ is called a \emph{projective discrete series representation} if there exists a non-zero vector $\xi \in \Hip$ such that
\[ \int_G | \langle \xi, \pi(x) \xi \rangle|^2 \dif{x} < \infty .\]
If $\pi$ is a projective discrete series representation, then in fact the above integral is finite for all $\xi \in \Hip$, and moreover one has the orthogonality relations
\begin{equation}
    \int_G \langle \xi, \pi(x) \eta \rangle \overline{ \langle \xi', \pi(x) \eta' \rangle } \dif{x} = d_{\pi}^{-1} \langle \xi, \xi' \rangle \overline{ \langle \eta, \eta' \rangle } \;\;\; \text{for all $\xi,\xi',\eta,\eta' \in \Hip$,} \label{eq:orthogonality}
\end{equation}
where the number $d_{\pi}$ is called the \emph{formal dimension of $\pi$}. It depends on the chosen Haar measure on $G$. If the Haar measure is scaled by a factor of $c > 0$, then the formal dimension is scaled by $c^{-1}$.

Given a discrete set $\Lambda \subseteq G$ and $\eta \in \Hip$, a sequence in $\Hip$ of the form
\[ \pi(\Lambda)\eta = (\pi(\lambda)\eta)_{\lambda \in \Lambda} \]
is often called a \emph{coherent system}.

Given $\xi,\eta \in \Hip$, we will denote by $C_\eta \xi \colon G \to \C$ the associated matrix coefficient given by
\[ C_\eta \xi (x) = \langle \xi, \pi(x) \eta \rangle, \quad x \in G .\]
As shown in e.g.\ \cite[Section 5.3]{FuGrHa17} the space $\Hi = C_\eta \Hip$ is a reproducing kernel Hilbert space satisfying \Cref{ass:rkhs} provided that $\eta$ belongs to the space
\[ \mathcal{B}_\pi = \{ \eta \in \Hip : \text{$C_{\eta}\xi \in W^2(G)$ for all $\xi \in \Hip$} \} .\]
The elements of this space were termed \emph{admissible analyzing vectors} in \cite{Gr08}. A set $\Lambda \subseteq G$ is sampling (resp.\ interpolating) for $\Hi$ if and only if $\pi(\Lambda)\eta$ is a frame (resp.\ Riesz sequence) for $\Hip$. Thus \Cref{thm:twisted_necessary_conditions} and \Cref{rmk:interpolating} apply to give the following theorem:

\begin{theorem}\label{thm:coherent_systems}
Let $(\pi,\Hip)$ be a projective discrete series representation of a unimodular lcsc group $G$ and let $\Lambda \subseteq G$.
\begin{enumerate}
    \item If $\pi(\Lambda)\eta$ is a frame for $\Hip$ where $\eta \in \mathcal{B}_\pi$, then $d_\pi \covol_+(\Lambda) \leq 1$.
    \item If $\pi(\Lambda)\eta$ is a Riesz sequence for $\Hip$ where $\eta \in \Hip$, then $d_\pi \covol_-(\Lambda) \geq 1$.
\end{enumerate}
\end{theorem}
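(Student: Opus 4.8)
The plan is to reduce both statements to \Cref{thm:twisted_necessary_conditions} by transporting the coherent system $\pi(\Lambda)\eta$ across the rescaled matrix-coefficient map $V \coloneqq d_\pi^{1/2}\|\eta\|^{-1}C_\eta$. The orthogonality relations \eqref{eq:orthogonality} already show that $V$ is an isometry from $\Hip$ onto $\Hi = C_\eta\Hip \subseteq L^2(G)$, and that it intertwines $\pi$ with $\lregw{\sigma}$. The key observation is that $V$ carries $\pi(\Lambda)\eta$ precisely onto a fixed scalar multiple of the kernel system $(k_\lambda)_{\lambda\in\Lambda}$ of $\Hi$, so that the frame (resp.\ Riesz) property transfers unchanged and one may invoke the necessary density conditions already established for $\Hi$.

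First I would record the transport identity. Using the intertwining relation $V\lregw{\sigma}\!$-versus-$\pi$, i.e.\ $V\pi(x) = \Ell_\sigma(x)V$, together with the formula $k_e = d_\pi\|\eta\|^{-2}C_\eta\eta$ from the preceding discussion (which gives $V\eta = d_\pi^{-1/2}\|\eta\|\,k_e$), and the $\sigma$-twisted $G$-invariance of the kernel, namely $k_\lambda = \Ell_\sigma(\lambda)k_e$, one obtains
\[ V\big(\pi(\lambda)\eta\big) = \Ell_\sigma(\lambda)V\eta = d_\pi^{-1/2}\|\eta\|\,\Ell_\sigma(\lambda)k_e = d_\pi^{-1/2}\|\eta\|\,k_\lambda \qquad (\lambda\in\Lambda). \]
Since $V$ is unitary and rescaling every vector of a family by a fixed nonzero constant preserves both the frame and the Riesz property (only the bounds change), this shows that $\pi(\Lambda)\eta$ is a frame (resp.\ Riesz sequence) for $\Hip$ if and only if $(k_\lambda)_{\lambda\in\Lambda}$ is a frame (resp.\ Riesz sequence) for $\Hi$.

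For part (i), when $\eta\in\mathcal{B}_\pi$ the space $\Hi$ satisfies all of \Cref{ass:rkhs}: parts (a) and (b) hold for every nonzero $\eta$ by the isometry and intertwining properties of $V$, while (c) is exactly the admissibility requirement $\eta\in\mathcal{B}_\pi$. Hence $\Lambda$ is a set of sampling for $\Hi$, and \Cref{thm:twisted_necessary_conditions}(i) yields $\|k_e\|^2\covol_+(\Lambda)\leq 1$; substituting the identity $\|k_e\|^2 = d_\pi$ established earlier gives $d_\pi\covol_+(\Lambda)\leq 1$.

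The main subtlety is part (ii), where $\eta$ is only assumed nonzero, so (c) may fail and $\Hi$ need not lie in the amalgam space $W^2(G)$. Here I would invoke the variant (ii$'$) from \Cref{rmk:interpolating}, which derives $\|k_e\|^2\covol_-(\Lambda)\geq 1$ directly from the hypothesis that $(k_\lambda)_{\lambda\in\Lambda}$ is a Riesz sequence, bypassing \Cref{ass:rkhs}(c) (that assumption enters \Cref{prop:riesz_limits} only to pass from an interpolation set to a Riesz sequence, a step that is unnecessary once a Riesz sequence is assumed outright). By the transport identity the Riesz hypothesis on $\pi(\Lambda)\eta$ supplies exactly this, and $\|k_e\|^2 = d_\pi$ converts the conclusion to $d_\pi\covol_-(\Lambda)\geq 1$. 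The only point to verify carefully is that the RKHS property together with \Cref{ass:rkhs}(a) and (b) genuinely hold for every nonzero $\eta$, which is immediate from \eqref{eq:orthogonality} and the bound $|C_\eta\xi(x)|\leq\|\eta\|\,\|\xi\|$, so that the machinery underlying (ii$'$) applies.
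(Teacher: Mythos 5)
Your proposal is correct and follows essentially the same route as the paper: Section \ref{subsec:coherent} sets up the rescaled matrix-coefficient isometry $d_\pi^{1/2}\|\eta\|^{-1}C_\eta$, identifies $\Hi=C_\eta\Hip$ as a $G$-invariant RKHS with $\|k_e\|^2=d_\pi$, and then invokes \Cref{thm:twisted_necessary_conditions} for (i) (using $\eta\in\mathcal{B}_\pi$ for the localization assumption) and \Cref{rmk:interpolating} for (ii), exactly as you do. Your explicit transport identity $V(\pi(\lambda)\eta)=d_\pi^{-1/2}\|\eta\|\,k_\lambda$ just makes precise what the paper leaves implicit.
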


\subsection{Sampling and interpolation in Paley--Wiener spaces}\label{subsec:paley}

In this subsection we assume that $G$ is an abelian lcsc group and denote by $\widehat{G}$ its Pontryagin dual. We define the Fourier transform according to the convention
\[ \widehat{f}(\omega) = \int_G f(x) \overline{\omega(x)} \dif{x}, \quad f \in L^1(G) ,\]
and to a choice of Haar measure on $G$ we implicitly integrate with respect to the corresponding dual Haar measure on $\widehat{G}$ so that the Plancherel formula holds:
\[ \int_G |f(x)|^2 \dif{x} = \int_{\widehat{G}} |\widehat{f}(\omega)|^2 \dif{\omega}, \quad f \in L^2(G) . \]
Given a relatively compact set $K \subseteq \widehat{G}$, the corresponding \emph{Paley--Wiener} space on $G$ is defined as
\[ PW_K(G) = \{ f \in L^2(G) : \text{$\widehat{f}(\omega) = 0$ for all $\omega \notin K$} \} .\]
It is well-known that the Paley--Wiener space is a $G$-invariant reproducing kernel Hilbert subspace of $L^2(G)$ with kernel determined by $\widehat{k_e} = \mathbbm{1}_K$. In particular $\| k_e \|^2 = \widehat{m}(K)$. An argument that \Cref{ass:rkhs} (c) is satisfied can be found in \cite[Lemma 1]{GrRa96}. Since $G$ is abelian, it is in particular amenable, so \Cref{cor:necessary_conditions_amenable} applies and we obtain the following:

\begin{theorem}\label{thm:paley_wiener}
Let $G$ be a abelian lcsc group and let $K$ be a relatively compact subset of $\widehat{G}$.
\begin{enumerate}
    \item If $\Lambda$ is a set of sampling for $PW_K(G)$, then $D^-(\Lambda) \geq \widehat{m}(K)$.
    \item If $\Lambda$ is a set of interpolation for $PW_K(G)$, then $D^+(\Lambda) \leq \widehat{m}(K)$.
\end{enumerate}
\end{theorem}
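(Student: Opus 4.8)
The plan is to observe that almost all of the work has already been done in the discussion preceding the statement: it has been shown there that $PW_K(G)$ is a reproducing kernel Hilbert subspace of $L^2(G)$ whose reproducing kernel satisfies $\widehat{k_e} = \mathbbm{1}_K$, and consequently that $\| k_e \|^2 = \widehat{m}(K)$. The remaining task is essentially bookkeeping: verify that $PW_K(G)$ meets all three hypotheses of \Cref{ass:rkhs} with the trivial $2$-cocycle, and then feed this into \Cref{cor:necessary_conditions_amenable}.

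First I would take $\sigma \equiv 1$, so that the $\sigma$-twisted translations $\lreg{\sigma}{x}$ reduce to ordinary left translations $\lreg{}{x}$ and $\sigma$-twisted $G$-invariance becomes plain translation invariance. Condition (a) is immediate, since $PW_K(G)$ is by construction a subspace of $L^2(G)$ carrying the restricted norm; it is closed because, by the Plancherel formula, it is the isometric preimage under the Fourier transform of the closed subspace of $L^2(\widehat{G})$ consisting of functions supported in $K$. Condition (b) has effectively been recorded already: translation by $x$ multiplies $\widehat{f}$ by the character $\omega \mapsto \omega(x)$, which leaves the support condition intact, so $\lreg{}{x} f \in PW_K(G)$ whenever $f \in PW_K(G)$.

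The one genuinely analytic point is the localization condition (c), namely that every $f \in PW_K(G)$ belongs to the Wiener amalgam space $W^2(G)$; this is the step I expect to be the main, if modest, obstacle. Heuristically, a function whose Fourier transform is supported in the relatively compact set $K$ is band-limited, hence regular enough that its local maximum function $f^{\natural}$ remains square-integrable. Rather than reprove this from scratch, I would invoke the estimate of Gröchenig and Razafinjatovo, cited as \cite[Lemma 1]{GrRa96}, which establishes exactly this amalgam-space membership for Paley--Wiener functions on locally compact abelian groups.

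Finally, since $G$ is abelian it is amenable, so \Cref{cor:necessary_conditions_amenable} applies to the reproducing kernel Hilbert space $\Hi = PW_K(G)$. Substituting the value $\| k_e \|^2 = \widehat{m}(K)$ into its two conclusions yields $D^-(\Lambda) \geq \widehat{m}(K)$ for any set of sampling and $D^+(\Lambda) \leq \widehat{m}(K)$ for any set of interpolation, which is precisely the assertion of the theorem.
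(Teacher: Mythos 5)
Your proposal is correct and follows essentially the same route as the paper: verify Assumptions (a)--(c) for $PW_K(G)$ with the trivial cocycle (with (c) delegated to \cite[Lemma 1]{GrRa96}), compute $\| k_e \|^2 = \widehat{m}(K)$ from $\widehat{k_e} = \mathbbm{1}_K$, and invoke \Cref{cor:necessary_conditions_amenable} using amenability of the abelian group $G$. The only difference is that you spell out the closedness of $PW_K(G)$ via Plancherel, which the paper leaves implicit.
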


For $G = \R^d$, the above theorem is a classical result of H.\ Landau \cite{La67}. Its generalization to locally compact abelian groups was proved in \cite{GrKuSe08}, although with a slightly different formulation than with Beurling densities, using instead structure of compactly generated abelian groups to compare with a certain lattice. A version of the necessary conditions in terms of Beurling densities along strong Følner sequences was given in \cite{RiSc20}, where it was shown to be equivalent to the main result of \cite{GrKuSe08}.

\appendix

\section{More on groupoids of point sets}\label{sec:point-set-groupoid}

In this appendix we consider the groupoid of a point set as a topological groupoid and characterize when it is étale.

Recall that a \emph{locally compact groupoid} is a groupoid $\mathcal{G}$ equipped with locally compact topology for which the maps $s$, $r$, multiplication and inversion are continuous with respect to the relative topologies of $\G^{(0)}$ in $\G$ and $\G^{(2)}$ in the product $\G \times \G$. We also assume that $\G$ is Hausdorff, which implies that $\G^{(0)}$ is closed in $\G$. A locally compact groupoid $\G$ is called \emph{étale} if the source and range maps are local homeomorphisms. For étale groupoids, the unit space $\G^{(0)}$ is open in $\G$ and the fibers $\G_{\omega}$ and $\G^{\omega}$ are discrete subsets of $\G$ for each $\omega \in \G^{(0)}$. An open bisection is a subset $\mathcal{U} \subseteq \G$ such that the restrictions of $s$ and $r$ to $\mathcal{U}$ are injective. An groupoid is étale if and only if its range and source maps are open and it admits a basis consisting of open bisections.

An \emph{abstract transversal} for a locally compact groupoid $\G$ is a closed subset $\Omega_0 \subseteq \Omega$ such that the following conditions hold:
\begin{enumerate}
    \item $\Omega_0$ meets every orbit of $\Omega$ as a right $\G$-space, i.e.,\ for every $\omega \in \Omega$ the intersection $\Omega_0 \cap \{ r(\alpha) : \alpha \in \G_\omega \}$ is nonempty.
    \item With $Z \coloneqq \{ \alpha \in \G : r(\alpha) \in \Omega_0 \}$,
    the restrictions $s \colon Z \to \Omega$ and $r \colon Z \to \Omega_0$ are open maps for the relative topologies on $Z$ and $\Omega_0$.
\end{enumerate}

The reason to consider transversals is that the space $Z$ establishes a groupoid equivalence between $\G$ and the restriction groupoid $\G|_{\Omega_0}$, cf.\ \cite[Example 2.7]{MuReWi87}.

\begin{proposition}\label{prop:transversal_groupoid}
Let $\Lambda$ be a relatively separated subset of a locally compact group $G$. The transversal $\dhull{\Lambda}$ is an abstract transversal for the transformation groupoid $G \ltimes \phull{\Lambda}$.
\end{proposition}

\begin{proof}
First we prove that every orbit of $\chull{\Lambda}$ as a right $G \ltimes \chull{\Lambda}$-space meets $\dhull{\Lambda}$. The orbit of $P \in \phull{\Lambda}$ is given by the set $\{ xP : x \in G \}$. Picking any $x \in P^{-1}$, one has $e \in xP$, hence $xP \in \dhull{\Lambda}$.

Consider the space $Z(\Lambda) = \{ (x,P) \in G \times \phull{\Lambda} : x^{-1} \in P \}$. We will prove that $s \colon Z(\Lambda) \to \phull{\Lambda}$ and $r \colon Z(\Lambda) \to \dhull{\Lambda}$ are open maps. It suffices to check that the image of $(V \times W) \cap Z(\Lambda)$ under $s$ is open in $\phull{\Lambda}$ and the corresponding image under $r$ is open in $\dhull{\Lambda}$ for every open $V$ in $G$ and open $W$ in $\chull{\Lambda}$. For $s$, note that
\[ s((V \times W) \cap Z(\Lambda)) = \{ P \in W : \text{$x^{-1} \in P$ for some $x \in V$} \} = W \cap \mathcal{O}^{V^{-1}} , \]
which is open in $\phull{\Lambda}$. Further,
\[ r((V \times W) \cap Z(\Lambda)) = \{ xP : x \in V, P \in W \} \cap \dhull{\Lambda} = \Big( \bigcup_{x \in V}xW \Big) \cap \dhull{\Lambda} \]
is open in $\dhull{\Lambda}$ by definition of the subspace topology and the continuity of the action of $G$ on $\chull{\Lambda}$. This finishes the proof.
\end{proof}

We now will show that the groupoid $\deloid{\Lambda}$ as in \Cref{def:groupoid} is {\'e}tale if and only if the point set $\Lambda$ is separated and provide an explicit basis of open bisections in this case. For this we need a lemma.

\begin{lemma}\label{lem:unit_space}
The space $\dhull{\Lambda}$ is open in $\deloid{\Lambda}$ if and only if $\Lambda$ is separated.
\end{lemma}

\begin{proof}
Assume that $\dhull{\Lambda}$ is not open. Then there exist $P \in \dhull{\Lambda}$ and a sequence $(x_n,P_n) \in \deloid{\Lambda}$ such that $(x_n,P_n) \to (e,P)$ and $x_n \neq e$. For any identity neighborhood $U \subseteq G$ we can find $n(U) \in \N$ such that $x_{n(U)}^{-1} \in U$. Hence $|P_{n(U)} \cap U| \geq 2$. Since $\{ x\Lambda : x \in G \}$ is dense in $\phull{\Lambda}$, we may by \Cref{prop:U_discrete_closed} (i) find $y_{n(U)} \in G$ such that $|y_{n(U)} \Lambda \cap U| \geq 2$. But then $|\Lambda \cap y_{n(U)}^{-1}U| \geq 2$, so $\Lambda$ is not $U$-separated. Since $U$ was arbitrary, this shows that $\Lambda$ is not separated.

Conversely, assume that $\Lambda$ is not separated. Then for every open identity neighborhood $U \subseteq G$ there is $y \in G$ such that $|yU \cap \Lambda| \geq 2$. We may assume that $y \in \Lambda$: Indeed, letting $V$ be an open identity neighborhood such that $V^2 \subseteq U$, we can by assumption find $y' \in G$ and two distinct $\lambda,\lambda' \in \Lambda \cap y'V$. Then $y'^{-1}\lambda,y'^{-1}\lambda' \in V$, so $(y'^{-1}\lambda)^{-1}(y'^{-1}\lambda') = \lambda^{-1}\lambda' \in V^2 \subseteq U$. Hence both $\lambda \in \Lambda \cap \lambda U$ and $\lambda' = \lambda(\lambda^{-1}\lambda') \in \Lambda \cap \lambda U$, so $|\Lambda \cap \lambda U| \geq 2$.

Having for every identity neighborhood $U \subseteq G$ found $\lambda \in \Lambda$ such that $|\Lambda \cap \lambda U| = |\lambda^{-1}\Lambda \cap U| \geq 2$, we may pick translates $(P_n)_n$ of $\Lambda$ containing the identity element and elements $e \neq x_n \in P_n$ such that $x_n \to e$.  By compactness, we may assume that $P_n \to P \in \chull{\Lambda}$ and it follows that $P \in \dhull{\Lambda}$.  The fact that $(x_n^{-1}, P_n) \to (e, P)$ in $\deloid{\Lambda}$ shows that the space of units of $\deloid{\Lambda}$ is not open.
\end{proof}

\begin{proposition}
The groupoid $\deloid{\Lambda}$ is étale if and only if $\Lambda$ is separated. Moreover, let $\Lambda$ be $U$-separated with $U$ a symmetric, open neighborhood of the identity. Then as $x$ ranges over $G$, as $V \subseteq G$ ranges over all symmetric, open neighborhoods of the identity with $V^2 \subseteq U$ and as $W$ ranges over open sets in $\dhull{\Lambda}$, the sets
\[ \mathcal{U}_{x,V,W} = ((xV \cap Vx) \times W) \cap \deloid{\Lambda} \]
form a basis for the topology of $\deloid{\Lambda}$ consisting of open bisections of $\deloid{\Lambda}$.
\end{proposition}

\begin{proof}
If $\Lambda$ is not separated then $\dhull{\Lambda}$ is not open in $\deloid{\Lambda}$ by \Cref{lem:unit_space}. In particular $\deloid{\Lambda}$ is not étale.

Conversely, suppose $\Lambda$ is $U$-separated with $U$ a symmetric, open identity neighborhood. For $x$ and $V$ as in the statement of the proposition, set $V_x = xV \cap Vx$. Since $\deloid{\Lambda}$ is equipped with the subspace topology inherited from $G \times \Omega(\Lambda)$ and the sets $V_x \times W$ form a basis for the topology of the latter, it follows that the sets $(V_x \times W) \cap \deloid{\Lambda}$ form a basis for the topology on $\deloid{\Lambda}$.

We now show that $s$ is a local homeomorphism. As already observed in the proof of \Cref{prop:transversal_groupoid} the image of $\mathcal{U}_{x,V,W}$ under $s$ is given by
\begin{align*}
    s(\mathcal{U}_{x,V,W}) = W \cap \mathcal{O}^{V_x^{-1}} .
\end{align*}
Thus $s(\mathcal{U}_{x,V,W})$ is open in $\dhull{\Lambda}$. Since $\dhull{\Lambda}$ is open in $\deloid{\Lambda}$ by \Cref{lem:unit_space}, it follows that $s(\mathcal{U}_{x,V,W})$ is open in $\deloid{\Lambda}$. Furthermore, since $|P \cap V_x^{-1}| \leq |P \cap x^{-1}V| \leq 1$ for every $P \in \phull{\Lambda}$, it follows that $|P \cap V_x^{-1}| = 1$, say $P \cap V_x^{-1} = \{ x_P \}$ for each $P \in W \cap \mathcal{O}^{V_x^{-1}}$. The assignment $W \cap \mathcal{O}^{V_x^{-1}} \to V_x^{-1}$ given by $P \mapsto x_P$ is continuous: Indeed, let $V'$ be any open neighborhood of $x_P$ in $V_x^{-1}$. Then $W \cap \mathcal{O}^{V'}$ is an open neighborhood of $P$. For any $Q \in W \cap \mathcal{O}^{V'}$ we have that $1 \leq |Q \cap V'| \leq |Q \cap V_x^{-1}| \leq 1$, so $Q \cap V' = \{ x_Q \}$. Hence $x_Q \in V'$, which proves continuity. It follows that the map $W \cap \mathcal{O}^{V_x^{-1}} \to \mathcal{U}_{x,V,W}$ given by $P \mapsto (x_P^{-1},P)$ is a continuous inverse to $s|_{\mathcal{U}_{x,V,W,}}$. Hence $s$ is a local homeomorphism, so $\deloid{\Lambda}$ is étale.

In order to show that $\mathcal{U}_{x,V,W}$ is an open bisection, it remains to show that $r$ is injective when restricted to this set. But if $xP = x'P'$ for $(x,P),(x',P') \in \mathcal{U}_{x,V,W}$, we find that $e \in P = x^{-1}x' P'$.  So $e,(x')^{-1}x \in P' \cap (xV)^{-1}(xV) \subseteq P' \cap U$. Since $P'$ is $U$-separated, it follows that $x'^{-1}x = e$, which immediately shows that $P = P'$ holds, too. This finishes the proof.
\end{proof}

\printbibliography

@article {MoPe74,
    AUTHOR = {Monastyrsky, M. I. and Perelomov, A. M.},
     TITLE = {Coherent states and bounded homogeneous domains},
   JOURNAL = {Rep. Math. Phys.},
  FJOURNAL = {Reports on Mathematical Physics},
    VOLUME = {6},
      YEAR = {1974},
     PAGES = {1--14},
      ISSN = {0034-4877},
   MRCLASS = {81.22},
  MRNUMBER = {0389032},
MRREVIEWER = {A. Morimoto},
       DOI = {10.1016/0034-4877(74)90047-0},
       URL = {https://doi.org/10.1016/0034-4877(74)90047-0},
}

@article {Pe72,
    AUTHOR = {Perelomov, A. M.},
     TITLE = {Coherent states for arbitrary {L}ie group},
   JOURNAL = {Comm. Math. Phys.},
  FJOURNAL = {Communications in Mathematical Physics},
    VOLUME = {26},
      YEAR = {1972},
     PAGES = {222--236},
      ISSN = {0010-3616},
   MRCLASS = {81.22},
  MRNUMBER = {363209},
MRREVIEWER = {A. K. Rajagopal},
       URL = {http://projecteuclid.org/euclid.cmp/1103858078},
}

@article {Gr21,
    AUTHOR = {Gr\"{o}chenig, Karlheinz},
     TITLE = {New function spaces associated to representations of nilpotent
              {L}ie groups and generalized time-frequency analysis},
   JOURNAL = {J. Lie Theory},
  FJOURNAL = {Journal of Lie Theory},
    VOLUME = {31},
      YEAR = {2021},
    NUMBER = {3},
     PAGES = {659--680},
      ISSN = {0949-5932},
   MRCLASS = {22E25 (42B35 43A65 43A80)},
  MRNUMBER = {4257165},
MRREVIEWER = {Bachir Bekka},
}

@article {FuGr07,
    AUTHOR = {F\"{u}hr, H. and Gr\"{o}chenig, K.},
     TITLE = {Sampling theorems on locally compact groups from oscillation
              estimates},
   JOURNAL = {Math. Z.},
  FJOURNAL = {Mathematische Zeitschrift},
    VOLUME = {255},
      YEAR = {2007},
    NUMBER = {1},
     PAGES = {177--194},
      ISSN = {0025-5874},
   MRCLASS = {43A80 (26D10 42C40 94A20)},
  MRNUMBER = {2262727},
MRREVIEWER = {Adam Sikora},
       DOI = {10.1007/s00209-006-0019-x},
       URL = {https://doi.org/10.1007/s00209-006-0019-x},
}

@article {Cava22,
    AUTHOR = {Caspers, Martijn and van Velthoven, Jordy Timo},
     TITLE = {Overcompleteness of coherent frames for unimodular amenable
              groups},
   JOURNAL = {Ark. Mat.},
  FJOURNAL = {Arkiv f\"{o}r Matematik},
    VOLUME = {61},
      YEAR = {2023},
    NUMBER = {2},
     PAGES = {277--299},
      ISSN = {0004-2080,1871-2487},
   MRCLASS = {42C30 (43A07 46B15)},
  MRNUMBER = {4666988},
}

@article {SeWa92-2,
    AUTHOR = {Seip, Kristian and Wallst\'{e}n, Robert},
     TITLE = {Density theorems for sampling and interpolation in the
              {B}argmann-{F}ock space. {II}},
   JOURNAL = {J. Reine Angew. Math.},
  FJOURNAL = {Journal f\"{u}r die Reine und Angewandte Mathematik. [Crelle's
              Journal]},
    VOLUME = {429},
      YEAR = {1992},
     PAGES = {107--113},
      ISSN = {0075-4102},
   MRCLASS = {46E20 (30E05 30H05 46E22)},
  MRNUMBER = {1173118},
MRREVIEWER = {Richard Rochberg},
}

@incollection {Ly92,
    AUTHOR = {Lyubarskii, Yu. I.},
     TITLE = {Frames in the {B}argmann space of entire functions},
 BOOKTITLE = {Entire and subharmonic functions},
    SERIES = {Adv. Soviet Math.},
    VOLUME = {11},
     PAGES = {167--180},
 PUBLISHER = {Amer. Math. Soc., Providence, RI},
      YEAR = {1992},
   MRCLASS = {30D10 (30E05 42A65 46E22)},
  MRNUMBER = {1188007},
MRREVIEWER = {Kristian Seip},
}

@incollection {Fe83,
    AUTHOR = {Feichtinger, H. G.},
     TITLE = {Banach convolution algebras of {W}iener type},
 BOOKTITLE = {Functions, series, operators, {V}ol. {I}, {II} ({B}udapest,
              1980)},
    SERIES = {Colloq. Math. Soc. J\'{a}nos Bolyai},
    VOLUME = {35},
     PAGES = {509--524},
 PUBLISHER = {North-Holland, Amsterdam},
      YEAR = {1983},
   MRCLASS = {43A15 (46H25)},
  MRNUMBER = {751019},
MRREVIEWER = {Harald E. Krogstad},
}

@article {Gr08,
    AUTHOR = {Gr\"{o}chenig, Karlheinz},
     TITLE = {The homogeneous approximation property and the comparison
              theorem for coherent frames},
   JOURNAL = {Sampl. Theory Signal Image Process.},
  FJOURNAL = {Sampling Theory in Signal and Image Processing. An
              International Journal},
    VOLUME = {7},
      YEAR = {2008},
    NUMBER = {3},
     PAGES = {271--279},
   MRCLASS = {42C40 (42C15)},
  MRNUMBER = {2493859},
MRREVIEWER = {Keri A. Kornelson},
}

@article {ChDeHe99,
    AUTHOR = {Christensen, Ole and Deng, Baiqiao and Heil, Christopher},
     TITLE = {Density of {G}abor frames},
   JOURNAL = {Appl. Comput. Harmon. Anal.},
  FJOURNAL = {Applied and Computational Harmonic Analysis. Time-Frequency
              and Time-Scale Analysis, Wavelets, Numerical Algorithms, and
              Applications},
    VOLUME = {7},
      YEAR = {1999},
    NUMBER = {3},
     PAGES = {292--304},
      ISSN = {1063-5203},
   MRCLASS = {42C15 (42C30 42C40)},
  MRNUMBER = {1721808},
MRREVIEWER = {R. A. Zalik},
       DOI = {10.1006/acha.1999.0271},
       URL = {https://doi.org/10.1006/acha.1999.0271},
}

@incollection {He08,
    AUTHOR = {Heil, Christopher},
     TITLE = {The density theorem and the homogeneous approximation property
              for {G}abor frames},
 BOOKTITLE = {Representations, wavelets, and frames},
    SERIES = {Appl. Numer. Harmon. Anal.},
     PAGES = {71--102},
 PUBLISHER = {Birkh\"{a}user Boston, Boston, MA},
      YEAR = {2008},
   MRCLASS = {42C15 (94A12)},
  MRNUMBER = {2459314},
MRREVIEWER = {Patrik Wahlberg},
       DOI = {10.1007/978-0-8176-4683-7\_5},
       URL = {https://doi.org/10.1007/978-0-8176-4683-7_5},
}

@article {BjHa18,
    AUTHOR = {Bj\"{o}rklund, Michael and Hartnick, Tobias},
     TITLE = {Approximate lattices},
   JOURNAL = {Duke Math. J.},
  FJOURNAL = {Duke Mathematical Journal},
    VOLUME = {167},
      YEAR = {2018},
    NUMBER = {15},
     PAGES = {2903--2964},
      ISSN = {0012-7094},
   MRCLASS = {20N99 (20F65 22F10)},
  MRNUMBER = {3865655},
MRREVIEWER = {Andrea Caranti},
       DOI = {10.1215/00127094-2018-0028},
       URL = {https://doi.org/10.1215/00127094-2018-0028},
}

@article {MiRa20,
    AUTHOR = {Mitkovski, Mishko and Ramirez, Aaron},
     TITLE = {Density results for continuous frames},
   JOURNAL = {J. Fourier Anal. Appl.},
  FJOURNAL = {The Journal of Fourier Analysis and Applications},
    VOLUME = {26},
      YEAR = {2020},
    NUMBER = {4},
     PAGES = {Paper No. 56, 26},
      ISSN = {1069-5869},
   MRCLASS = {42C15 (46E22 94A20)},
  MRNUMBER = {4115628},
MRREVIEWER = {Peter Balazs},
       DOI = {10.1007/s00041-020-09762-6},
       URL = {https://doi.org/10.1007/s00041-020-09762-6},
}

@book {Yo80,
    AUTHOR = {Young, Robert M.},
     TITLE = {An introduction to nonharmonic {F}ourier series},
    SERIES = {Pure and Applied Mathematics},
    VOLUME = {93},
 PUBLISHER = {Academic Press, Inc. [Harcourt Brace Jovanovich, Publishers],
              New York-London},
      YEAR = {1980},
     PAGES = {x+246},
      ISBN = {0-12-772850-3},
   MRCLASS = {42C30 (30D15 42-02)},
  MRNUMBER = {591684},
MRREVIEWER = {Carlos A. Berenstein},
}

@book {Me72,
    AUTHOR = {Meyer, Yves},
     TITLE = {Algebraic numbers and harmonic analysis},
    SERIES = {North-Holland Mathematical Library, Vol. 2},
 PUBLISHER = {North-Holland Publishing Co., Amsterdam-London; American
              Elsevier Publishing Co., Inc., New York},
      YEAR = {1972},
     PAGES = {x+274},
   MRCLASS = {12A15 (10F45 42A44 43A46)},
  MRNUMBER = {0485769},
MRREVIEWER = {Henri Joris},
}

@book {BaGr13,
    AUTHOR = {Baake, Michael and Grimm, Uwe},
     TITLE = {Aperiodic order. {V}ol. 1},
    SERIES = {Encyclopedia of Mathematics and its Applications},
    VOLUME = {149},
      NOTE = {A mathematical invitation,
              With a foreword by Roger Penrose},
 PUBLISHER = {Cambridge University Press, Cambridge},
      YEAR = {2013},
     PAGES = {xvi+531},
      ISBN = {978-0-521-86991-1},
   MRCLASS = {52C23 (11H06 20C35 20H15 82D25)},
  MRNUMBER = {3136260},
MRREVIEWER = {Jean-Pierre Gazeau},
       DOI = {10.1017/CBO9781139025256},
       URL = {https://doi.org/10.1017/CBO9781139025256},
}

@article {MuReWi87,
    AUTHOR = {Muhly, Paul S. and Renault, Jean N. and Williams, Dana P.},
     TITLE = {Equivalence and isomorphism for groupoid {$C^\ast$}-algebras},
   JOURNAL = {J. Operator Theory},
  FJOURNAL = {Journal of Operator Theory},
    VOLUME = {17},
      YEAR = {1987},
    NUMBER = {1},
     PAGES = {3--22},
      ISSN = {0379-4024},
   MRCLASS = {46L55 (22D25)},
  MRNUMBER = {873460},
MRREVIEWER = {Jonathan M. Rosenberg},
}

@article {La67,
    AUTHOR = {Landau, H. J.},
     TITLE = {Necessary density conditions for sampling and interpolation of
              certain entire functions},
   JOURNAL = {Acta Math.},
  FJOURNAL = {Acta Mathematica},
    VOLUME = {117},
      YEAR = {1967},
     PAGES = {37--52},
      ISSN = {0001-5962},
   MRCLASS = {42.18},
  MRNUMBER = {222554},
MRREVIEWER = {R. E. Edwards},
       DOI = {10.1007/BF02395039},
       URL = {https://doi.org/10.1007/BF02395039},
}

@article {GrRa96,
    AUTHOR = {Gr\"{o}chenig, K. and Razafinjatovo, H.},
     TITLE = {On {L}andau's necessary density conditions for sampling and
              interpolation of band-limited functions},
   JOURNAL = {J. London Math. Soc. (2)},
  FJOURNAL = {Journal of the London Mathematical Society. Second Series},
    VOLUME = {54},
      YEAR = {1996},
    NUMBER = {3},
     PAGES = {557--565},
      ISSN = {0024-6107},
   MRCLASS = {42B99 (30E05 42A65 94A99)},
  MRNUMBER = {1413898},
       DOI = {10.1112/jlms/54.3.557},
       URL = {https://doi.org/10.1112/jlms/54.3.557},
}

@article {GrKuSe08,
    AUTHOR = {Gr\"{o}chenig, Karlheinz and Kutyniok, Gitta and Seip, Kristian},
     TITLE = {Landau's necessary density conditions for {LCA} groups},
   JOURNAL = {J. Funct. Anal.},
  FJOURNAL = {Journal of Functional Analysis},
    VOLUME = {255},
      YEAR = {2008},
    NUMBER = {7},
     PAGES = {1831--1850},
      ISSN = {0022-1236},
   MRCLASS = {43A70 (42C15 94A20)},
  MRNUMBER = {2442085},
MRREVIEWER = {Gabriel Picioroaga},
       DOI = {10.1016/j.jfa.2008.07.016},
       URL = {https://doi.org/10.1016/j.jfa.2008.07.016},
}

@article {RiSc20,
    AUTHOR = {Richard, Christoph and Schumacher, Christoph},
     TITLE = {On sampling and interpolation by model sets},
   JOURNAL = {J. Fourier Anal. Appl.},
  FJOURNAL = {The Journal of Fourier Analysis and Applications},
    VOLUME = {26},
      YEAR = {2020},
    NUMBER = {3},
     PAGES = {Paper No. 39, 37},
      ISSN = {1069-5869},
   MRCLASS = {43A25 (52C23)},
  MRNUMBER = {4102411},
MRREVIEWER = {Radomir S. Stankovi\'{c}},
       DOI = {10.1007/s00041-020-09742-w},
       URL = {https://doi.org/10.1007/s00041-020-09742-w},
}

@article {Rova22,
    AUTHOR = {Romero, Jos\'{e} Luis and van Velthoven, Jordy Timo},
     TITLE = {The density theorem for discrete series representations
              restricted to lattices},
   JOURNAL = {Expo. Math.},
  FJOURNAL = {Expositiones Mathematicae},
    VOLUME = {40},
      YEAR = {2022},
    NUMBER = {2},
     PAGES = {265--301},
      ISSN = {0723-0869},
   MRCLASS = {22D25 (22D10 22E40 42C15 42C30 42C40)},
  MRNUMBER = {4432220},
       DOI = {10.1016/j.exmath.2021.10.001},
       URL = {https://doi.org/10.1016/j.exmath.2021.10.001},
}

@article{KyPeVa15,
 ISSN = {00029947},
 URL = {http://www.jstor.org/stable/24513376},
 abstract = {We prove that the L2-Betti numbers of a unimodular locally compact group G coincide, up to a natural scaling constant, with the L2-Betti numbers of the countable equivalence relation induced on a cross section of any essentially free ergodic probability measure preserving action of G. As a consequence, we obtain that the reduced and unreduced L2-Betti numbers of G agree and that the L2-Betti numbers of a lattice Γ in G equal those of G up to scaling by the covolume of Γ in G. We also deduce several vanishing results, including the vanishing of the reduced L2-cohomology for amenable locally compact groups.},
 author = {David Kyed and Henrik Densing Pedersen and Stefaan Vaes},
 journal = {Transactions of the American Mathematical Society},
 number = {7},
 pages = {4917--4956},
 publisher = {American Mathematical Society},
 title = {$L^2$-Betti numbers of locally compact groups and their cross section equivalence relations},
 volume = {367},
 year = {2015}
}

@book {Ta02,
    AUTHOR = {Takesaki, M.},
     TITLE = {Theory of operator algebras. {I}},
    SERIES = {Encyclopaedia of Mathematical Sciences},
    VOLUME = {124},
      NOTE = {Reprint of the first (1979) edition,
              Operator Algebras and Non-commutative Geometry, 5},
 PUBLISHER = {Springer-Verlag, Berlin},
      YEAR = {2002},
     PAGES = {xx+415},
      ISBN = {3-540-42248-X},
   MRCLASS = {46Lxx (46-01)},
  MRNUMBER = {1873025},
}

@article {BjHaPo18,
    AUTHOR = {Bj\"{o}rklund, Michael and Hartnick, Tobias and Pogorzelski,
              Felix},
     TITLE = {Aperiodic order and spherical diffraction, {I}:
              auto-correlation of regular model sets},
   JOURNAL = {Proc. Lond. Math. Soc. (3)},
  FJOURNAL = {Proceedings of the London Mathematical Society. Third Series},
    VOLUME = {116},
      YEAR = {2018},
    NUMBER = {4},
     PAGES = {957--996},
      ISSN = {0024-6115},
   MRCLASS = {52C23 (22D40 37B51)},
  MRNUMBER = {3789837},
MRREVIEWER = {Jean Lagac\'{e}},
       DOI = {10.1112/plms.12091},
       URL = {https://doi.org/10.1112/plms.12091},
}

@article {Pe98,
    AUTHOR = {Pesenson, Isaac},
     TITLE = {Sampling of {P}aley-{W}iener functions on stratified groups},
   JOURNAL = {J. Fourier Anal. Appl.},
  FJOURNAL = {The Journal of Fourier Analysis and Applications},
    VOLUME = {4},
      YEAR = {1998},
    NUMBER = {3},
     PAGES = {271--281},
      ISSN = {1069-5869},
   MRCLASS = {41A17 (43A30)},
  MRNUMBER = {1650917},
MRREVIEWER = {J. Musielak},
       DOI = {10.1007/BF02476027},
       URL = {https://doi.org/10.1007/BF02476027},
}

@article {BjHaPo22,
    AUTHOR = {Bj\"{o}rklund, Michael and Hartnick, Tobias and Pogorzelski,
              Felix},
     TITLE = {Aperiodic order and spherical diffraction, {II}: translation
              bounded measures on homogeneous spaces},
   JOURNAL = {Math. Z.},
  FJOURNAL = {Mathematische Zeitschrift},
    VOLUME = {300},
      YEAR = {2022},
    NUMBER = {2},
     PAGES = {1157--1201},
      ISSN = {0025-5874},
   MRCLASS = {43A85 (37B52 51M10 60B15)},
  MRNUMBER = {4363773},
       DOI = {10.1007/s00209-021-02817-4},
       URL = {https://doi.org/10.1007/s00209-021-02817-4},
}

@book {BePe92,
    AUTHOR = {Benedetti, Riccardo and Petronio, Carlo},
     TITLE = {Lectures on hyperbolic geometry},
    SERIES = {Universitext},
 PUBLISHER = {Springer-Verlag, Berlin},
      YEAR = {1992},
     PAGES = {xiv+330},
      ISBN = {3-540-55534-X},
   MRCLASS = {57M50 (30F40 30F60 51M10 57N10)},
  MRNUMBER = {1219310},
MRREVIEWER = {Colin C. Adams},
       DOI = {10.1007/978-3-642-58158-8},
       URL = {https://doi.org/10.1007/978-3-642-58158-8},
}

@article {FuGrHa17,
    AUTHOR = {F\"{u}hr, Hartmut and Gr\"{o}chenig, Karlheinz and Haimi, Antti and
              Klotz, Andreas and Romero, Jos\'{e} Luis},
     TITLE = {Density of sampling and interpolation in reproducing kernel
              {H}ilbert spaces},
   JOURNAL = {J. Lond. Math. Soc. (2)},
  FJOURNAL = {Journal of the London Mathematical Society. Second Series},
    VOLUME = {96},
      YEAR = {2017},
    NUMBER = {3},
     PAGES = {663--686},
      ISSN = {0024-6107},
   MRCLASS = {94A20 (42C15 42C40 46C05)},
  MRNUMBER = {3742438},
MRREVIEWER = {Ilona Iglewska-Nowak},
       DOI = {10.1112/jlms.12083},
       URL = {https://doi.org/10.1112/jlms.12083},
}

@book {Gr01,
    AUTHOR = {Gr\"{o}chenig, Karlheinz},
     TITLE = {Foundations of time-frequency analysis},
    SERIES = {Applied and Numerical Harmonic Analysis},
 PUBLISHER = {Birkh\"{a}user Boston, Inc., Boston, MA},
      YEAR = {2001},
     PAGES = {xvi+359},
      ISBN = {0-8176-4022-3},
   MRCLASS = {42-02 (42Axx 46N30 94A12)},
  MRNUMBER = {1843717},
MRREVIEWER = {David F. Walnut},
       DOI = {10.1007/978-1-4612-0003-1},
       URL = {https://doi.org/10.1007/978-1-4612-0003-1},
}

@Article{Br14,
 Author = {Breuillard, Emmanuel},
 Title = {Geometry of locally compact groups of polynomial growth and shape of large balls},
 FJournal = {Groups, Geometry, and Dynamics},
 Journal = {Groups Geom. Dyn.},
 ISSN = {1661-7207},
 Volume = {8},
 Number = {3},
 Pages = {669--732},
 Year = {2014},
 Language = {English},
 DOI = {10.4171/GGD/244},
 Keywords = {22E25,53C17},
 zbMATH = {6371838},
 Zbl = {1310.22005}
}

@article {GrOrRo15,
    AUTHOR = {Gr\"{o}chenig, Karlheinz and Ortega-Cerd\`a, Joaquim and
              Romero, Jos\'{e} Luis},
     TITLE = {Deformation of {G}abor systems},
   JOURNAL = {Adv. Math.},
  FJOURNAL = {Advances in Mathematics},
    VOLUME = {277},
      YEAR = {2015},
     PAGES = {388--425},
      ISSN = {0001-8708,1090-2082},
   MRCLASS = {42C15 (47B38 81R30 94A12)},
  MRNUMBER = {3336091},
MRREVIEWER = {Ilona\ Iglewska-Nowak},
       DOI = {10.1016/j.aim.2015.01.019},
       URL = {https://doi.org/10.1016/j.aim.2015.01.019},
}

@incollection {KePu00,
    AUTHOR = {Kellendonk, Johannes and Putnam, Ian F.},
     TITLE = {Tilings, {$C^*$}-algebras, and {$K$}-theory},
 BOOKTITLE = {Directions in mathematical quasicrystals},
    SERIES = {CRM Monogr. Ser.},
    VOLUME = {13},
     PAGES = {177--206},
 PUBLISHER = {Amer. Math. Soc., Providence, RI},
      YEAR = {2000},
   MRCLASS = {46L60 (19K14 37B50 46L80 52C23 82D25)},
  MRNUMBER = {1798993},
MRREVIEWER = {G. A. Noskov},
}

@incollection {Be85,
    AUTHOR = {Bellissard, Jean},
     TITLE = {{$K$}-theory of {$C^\ast$}-algebras in solid state physics},
 BOOKTITLE = {Statistical mechanics and field theory: mathematical aspects
              ({G}roningen, 1985)},
    SERIES = {Lecture Notes in Phys.},
    VOLUME = {257},
     PAGES = {99--156},
 PUBLISHER = {Springer, Berlin},
      YEAR = {1986},
   MRCLASS = {46L60 (19K56 46L80 82A15)},
  MRNUMBER = {862832},
MRREVIEWER = {Thierry Fack},
       DOI = {10.1007/3-540-16777-3\_74},
       URL = {https://doi.org/10.1007/3-540-16777-3_74},
}

@book {Re80,
    AUTHOR = {Renault, Jean},
     TITLE = {A groupoid approach to {$C^{\ast} $}-algebras},
    SERIES = {Lecture Notes in Mathematics},
    VOLUME = {793},
 PUBLISHER = {Springer, Berlin},
      YEAR = {1980},
     PAGES = {ii+160},
      ISBN = {3-540-09977-8},
   MRCLASS = {46Lxx (22D25 22D40)},
  MRNUMBER = {584266},
MRREVIEWER = {A. K. Seda},
}

@book {Ch03,
    AUTHOR = {Christensen, Ole},
     TITLE = {An introduction to frames and {R}iesz bases},
    SERIES = {Applied and Numerical Harmonic Analysis},
 PUBLISHER = {Birkh\"{a}user Boston, Inc., Boston, MA},
      YEAR = {2003},
     PAGES = {xxii+440},
      ISBN = {0-8176-4295-1},
   MRCLASS = {42-02 (42C15 42C40 46B15 46C05)},
  MRNUMBER = {1946982},
MRREVIEWER = {R. A. Zalik},
       DOI = {10.1007/978-0-8176-8224-8},
       URL = {https://doi.org/10.1007/978-0-8176-8224-8},
}

@book {Be89,
    AUTHOR = {Beurling, Arne},
     TITLE = {The collected works of {A}rne {B}eurling. {V}ol. 2},
    SERIES = {Contemporary Mathematicians},
      NOTE = {Harmonic analysis,
              Edited by L. Carleson, P. Malliavin, J. Neuberger and J.
              Wermer},
 PUBLISHER = {Birkh\"{a}user Boston, Inc., Boston, MA},
      YEAR = {1989},
     PAGES = {xx+389},
      ISBN = {0-8176-3416-9},
   MRCLASS = {01A75 (30-03 31-03 42-03 43-03)},
  MRNUMBER = {1057614},
MRREVIEWER = {R. P. Boas},
}

@article {Ke97,
    AUTHOR = {Kellendonk, Johannes},
     TITLE = {The local structure of tilings and their integer group of
              coinvariants},
   JOURNAL = {Comm. Math. Phys.},
  FJOURNAL = {Communications in Mathematical Physics},
    VOLUME = {187},
      YEAR = {1997},
    NUMBER = {1},
     PAGES = {115--157},
      ISSN = {0010-3616},
   MRCLASS = {46L87 (52C22)},
  MRNUMBER = {1463824},
MRREVIEWER = {G. A. Noskov},
       DOI = {10.1007/s002200050131},
       URL = {https://doi.org/10.1007/s002200050131},
}

@article {LaPl03,
    AUTHOR = {Lagarias, Jeffrey C. and Pleasants, Peter A. B.},
     TITLE = {Repetitive {D}elone sets and quasicrystals},
   JOURNAL = {Ergodic Theory Dynam. Systems},
  FJOURNAL = {Ergodic Theory and Dynamical Systems},
    VOLUME = {23},
      YEAR = {2003},
    NUMBER = {3},
     PAGES = {831--867},
      ISSN = {0143-3857},
   MRCLASS = {52C23 (37A25 37B50)},
  MRNUMBER = {1992666},
MRREVIEWER = {David Damanik},
       DOI = {10.1017/S0143385702001566},
       URL = {https://doi.org/10.1017/S0143385702001566},
}

@article {RaSt95,
    AUTHOR = {Ramanathan, Jayakumar and Steger, Tim},
     TITLE = {Incompleteness of sparse coherent states},
   JOURNAL = {Appl. Comput. Harmon. Anal.},
  FJOURNAL = {Applied and Computational Harmonic Analysis. Time-Frequency
              and Time-Scale Analysis, Wavelets, Numerical Algorithms, and
              Applications},
    VOLUME = {2},
      YEAR = {1995},
    NUMBER = {2},
     PAGES = {148--153},
      ISSN = {1063-5203},
   MRCLASS = {81R30},
  MRNUMBER = {1325536},
MRREVIEWER = {Apostolos Vourdas},
       DOI = {10.1006/acha.1995.1010},
       URL = {https://doi.org/10.1006/acha.1995.1010},
}

@article {He07,
    AUTHOR = {Heil, Christopher},
     TITLE = {History and evolution of the density theorem for {G}abor
              frames},
   JOURNAL = {J. Fourier Anal. Appl.},
  FJOURNAL = {The Journal of Fourier Analysis and Applications},
    VOLUME = {13},
      YEAR = {2007},
    NUMBER = {2},
     PAGES = {113--166},
      ISSN = {1069-5869},
   MRCLASS = {42C15 (01A60 01A61 42-02 42-03 42C30)},
  MRNUMBER = {2313431},
MRREVIEWER = {R. A. Zalik},
       DOI = {10.1007/s00041-006-6073-2},
       URL = {https://doi.org/10.1007/s00041-006-6073-2},
}

@book {Kr15,
    AUTHOR = {Kreisel, Michael},
     TITLE = {Gabor frames for quasicrystals and {K}-theory},
      NOTE = {Thesis (Ph.D.)--University of Maryland, College Park},
 PUBLISHER = {ProQuest LLC, Ann Arbor, MI},
      YEAR = {2015},
     PAGES = {95},
      ISBN = {978-1321-88031-1},
   MRCLASS = {Thesis},
  MRNUMBER = {3407410},
       URL =
              {http://gateway.proquest.com/openurl?url_ver=Z39.88-2004&rft_val_fmt=info:ofi/fmt:kev:mtx:dissertation&res_dat=xri:pqm&rft_dat=xri:pqdiss:3711683},
}

@article {Kr16,
    AUTHOR = {Kreisel, Michael},
     TITLE = {Gabor frames for quasicrystals, {$K$}-theory, and twisted gap
              labeling},
   JOURNAL = {J. Funct. Anal.},
  FJOURNAL = {Journal of Functional Analysis},
    VOLUME = {270},
      YEAR = {2016},
    NUMBER = {3},
     PAGES = {1001--1030},
      ISSN = {0022-1236},
   MRCLASS = {42C15 (19A99)},
  MRNUMBER = {3438328},
MRREVIEWER = {Shiv Kumar Kaushik},
       DOI = {10.1016/j.jfa.2015.11.011},
       URL = {https://doi.org/10.1016/j.jfa.2015.11.011},
}

@article {Ke95,
    AUTHOR = {Kellendonk, Johannes},
     TITLE = {Noncommutative geometry of tilings and gap labelling},
   JOURNAL = {Rev. Math. Phys.},
  FJOURNAL = {Reviews in Mathematical Physics. A Journal for Both Review and
              Original Research Papers in the Field of Mathematical Physics},
    VOLUME = {7},
      YEAR = {1995},
    NUMBER = {7},
     PAGES = {1133--1180},
      ISSN = {0129-055X},
   MRCLASS = {46L87 (05B45 19K14 46L55 46L80 52C20 52C22 82D25)},
  MRNUMBER = {1359991},
MRREVIEWER = {Albert Jeu-Liang Sheu},
       DOI = {10.1142/S0129055X95000426},
       URL = {https://doi.org/10.1142/S0129055X95000426},
}

@incollection {BeHeZa00,
    AUTHOR = {Bellissard, J. and Herrmann, D. J. L. and Zarrouati, M.},
     TITLE = {Hulls of aperiodic solids and gap labeling theorems},
 BOOKTITLE = {Directions in mathematical quasicrystals},
    SERIES = {CRM Monogr. Ser.},
    VOLUME = {13},
     PAGES = {207--258},
 PUBLISHER = {Amer. Math. Soc., Providence, RI},
      YEAR = {2000},
   MRCLASS = {82D20 (19K14 52C23 81R60 82D25)},
  MRNUMBER = {1798994},
MRREVIEWER = {Johannes Kellendonk},
}

@article {BoMe21,
    AUTHOR = {Bourne, Chris and Mesland, Bram},
     TITLE = {Localised module frames and {W}annier bases from groupoid
              {M}orita equivalences},
   JOURNAL = {J. Fourier Anal. Appl.},
  FJOURNAL = {The Journal of Fourier Analysis and Applications},
    VOLUME = {27},
      YEAR = {2021},
    NUMBER = {4},
     PAGES = {Paper No. 69, 39},
      ISSN = {1069-5869},
   MRCLASS = {46L08 (42C15 46L55 81R60)},
  MRNUMBER = {4298071},
MRREVIEWER = {Zhong-Qi Xiang},
       DOI = {10.1007/s00041-021-09873-8},
       URL = {https://doi.org/10.1007/s00041-021-09873-8},
}

@article {BoMe19,
    AUTHOR = {Bourne, C. and Mesland, B.},
     TITLE = {Index theory and topological phases of aperiodic lattices},
   JOURNAL = {Ann. Henri Poincar\'{e}},
  FJOURNAL = {Annales Henri Poincar\'{e}. A Journal of Theoretical and
              Mathematical Physics},
    VOLUME = {20},
      YEAR = {2019},
    NUMBER = {6},
     PAGES = {1969--2038},
      ISSN = {1424-0637},
   MRCLASS = {46L60 (19K35 19K56 22A22 37B50 46L80 81R60)},
  MRNUMBER = {3956166},
MRREVIEWER = {Vladimir Manuilov},
       DOI = {10.1007/s00023-019-00764-9},
       URL = {https://doi.org/10.1007/s00023-019-00764-9},
}

@article {BaMo04,
    AUTHOR = {Baake, Michael and Moody, Robert V.},
     TITLE = {Weighted {D}irac combs with pure point diffraction},
   JOURNAL = {J. Reine Angew. Math.},
  FJOURNAL = {Journal f\"{u}r die Reine und Angewandte Mathematik. [Crelle's
              Journal]},
    VOLUME = {573},
      YEAR = {2004},
     PAGES = {61--94},
      ISSN = {0075-4102},
   MRCLASS = {43A80 (52C23 82D25)},
  MRNUMBER = {2084582},
       DOI = {10.1515/crll.2004.064},
       URL = {https://doi.org/10.1515/crll.2004.064},
}

@article {Ho95,
    AUTHOR = {Hof, A.},
     TITLE = {On diffraction by aperiodic structures},
   JOURNAL = {Comm. Math. Phys.},
  FJOURNAL = {Communications in Mathematical Physics},
    VOLUME = {169},
      YEAR = {1995},
    NUMBER = {1},
     PAGES = {25--43},
      ISSN = {0010-3616},
   MRCLASS = {82D25 (52C22 78A45)},
  MRNUMBER = {1328260},
MRREVIEWER = {Tatiana Allen},
       URL = {http://projecteuclid.org/euclid.cmp/1104272610},
}

@article {RiSt17,
    AUTHOR = {Richard, Christoph and Strungaru, Nicolae},
     TITLE = {Pure point diffraction and {P}oisson summation},
   JOURNAL = {Ann. Henri Poincar\'{e}},
  FJOURNAL = {Annales Henri Poincar\'{e}. A Journal of Theoretical and
              Mathematical Physics},
    VOLUME = {18},
      YEAR = {2017},
    NUMBER = {12},
     PAGES = {3903--3931},
      ISSN = {1424-0637},
   MRCLASS = {43A15 (43A10)},
  MRNUMBER = {3723344},
MRREVIEWER = {Stamatis Koumandos},
       DOI = {10.1007/s00023-017-0620-z},
       URL = {https://doi.org/10.1007/s00023-017-0620-z},
}

@article {PoRiSt22,
    AUTHOR = {Pogorzelski, Felix and Richard, Christoph and Strungaru,
              Nicolae},
     TITLE = {Leptin densities in amenable groups},
   JOURNAL = {J. Fourier Anal. Appl.},
  FJOURNAL = {The Journal of Fourier Analysis and Applications},
    VOLUME = {28},
      YEAR = {2022},
    NUMBER = {6},
     PAGES = {Paper No. 85, 36},
      ISSN = {1069-5869},
   MRCLASS = {43A05 (43A07 43A60 52C23 78A45)},
  MRNUMBER = {4510513},
       DOI = {10.1007/s00041-022-09978-8},
       URL = {https://doi.org/10.1007/s00041-022-09978-8},
}

@unpublished {Enva22,
    AUTHOR  = {Enstad, Ulrik and van Velthoven, Jordy Timo},
    TITLE   = {Complete coherent systems over approximate lattices in amenable groups},
    YEAR    = {2022},
    NOTE    = {Preprint, arXiv:2208.05896},
}

@book {Su75,
    AUTHOR = {Sugiura, Mitsuo},
     TITLE = {Unitary representations and harmonic analysis},
      NOTE = {An introduction},
 PUBLISHER = {Kodansha, Ltd., Tokyo; Halsted Press [John Wiley \& Sons,
              Inc.], New York-London-Sydney},
      YEAR = {1975},
     PAGES = {xii+402},
   MRCLASS = {22E45 (43A65)},
  MRNUMBER = {0498995},
MRREVIEWER = {Kenneth Johnson},
}

@unpublished {Ja05,
    AUTHOR  = {Janssen, A.J.E.M.},
    TITLE   = {Classroom Proof of the Density Theorem for Gabor Systems},
    YEAR    = {2005},
    NOTE    = {Preprint, Vienna, Erwin
Schrödinger Institute (ESI) Preprint 1649},
}

@unpublished {BjHaKa21,
    AUTHOR  = {Björklund, Michael and Hartnick, Tobias and Yakov, Karasik},
    TITLE   = {Intersection spaces and multiple transverse recurrence},
    YEAR    = {2021},
    NOTE    = {Preprint, arXiv:2108.09064},
}

@unpublished {BeHaPo20,
    AUTHOR  = {Beckus, Siegfried and Hartnick, Tobias and Pogorzelski, Felix},
    TITLE   = {Linear repetitivity beyond abelian groups},
    YEAR    = {2020},
    NOTE    = {Preprint, arXiv:2001.10725},
}

@unpublished {Ho14,
    AUTHOR = {Höfler, Angelika},
    TITLE  = {Necessary density conditions for frames on homogeneous groups},
    YEAR   = {2014},
    NOTE   = { PhD thesis, Universit¨at Wien, 2014},
}

@incollection {connes79,
    AUTHOR = {Connes, Alain},
     TITLE = {Sur la th\'{e}orie non commutative de l'int\'{e}gration},
 BOOKTITLE = {Alg\`ebres d'op\'{e}rateurs ({S}\'{e}m., {L}es {P}lans-sur-{B}ex, 1978)},
    SERIES = {Lecture Notes in Math.},
    VOLUME = {725},
     PAGES = {19--143},
 PUBLISHER = {Springer, Berlin},
      YEAR = {1979},
   MRCLASS = {46L50 (22E46 58G10 58G12)},
  MRNUMBER = {548112},
MRREVIEWER = {M. F. Atiyah},
}

\end{document}